\PassOptionsToPackage{unicode}{hyperref}
\PassOptionsToPackage{hyphens}{url}
\RequirePackage{setspace}
\documentclass[aap]{imsart}
\RequirePackage{amsthm,amsmath,amsfonts,amssymb}
\RequirePackage[numbers,sort&compress]{natbib}
\RequirePackage[colorlinks,citecolor=blue,urlcolor=blue]{hyperref}
\RequirePackage{graphicx}
\usepackage{color}
\usepackage[utf8]{inputenc}
\usepackage[T1]{fontenc}
\usepackage{comment}
\usepackage{textcomp} 
\usepackage{bbm}
\usepackage{nicefrac}

\IfFileExists{footnotehyper.sty}{\usepackage{footnotehyper}}{\usepackage{footnote}}
\usepackage{graphicx}
\makeatletter
\def\maxwidth{\ifdim\Gin@nat@width>\linewidth\linewidth\else\Gin@nat@width\fi}
\def\maxheight{\ifdim\Gin@nat@height>\textheight\textheight\else\Gin@nat@height\fi}
\makeatother

\setkeys{Gin}{width=\maxwidth,height=\maxheight,keepaspectratio}
\makeatletter
\def\fps@figure{htbp}
\makeatother

\NewDocumentCommand\citeproctext{}{}
\NewDocumentCommand\citeproc{mm}{%
  \begingroup\def\citeproctext{#2}\cite{#1}\endgroup}

\newlength{\cslhangindent}
\newlength{\csllabelwidth}
 {\begin{list}{}{%
  \setlength{\itemindent}{0pt}
  \setlength{\leftmargin}{0pt}
  \setlength{\parsep}{0pt}
  \ifodd #1
   \setlength{\leftmargin}{\cslhangindent}
   \setlength{\itemindent}{-1\cslhangindent}
  \fi
  \setlength{\itemsep}{#2\baselineskip}}}
 {\end{list}}

\usepackage{calc}

\usepackage{booktabs}
\usepackage{multirow}
\usepackage{amsmath}
\usepackage{amsthm}
\usepackage{setspace} 
\usepackage{mathtools}

\usepackage{subcaption} 
\usepackage{float}
\usepackage{algorithm}
\usepackage{algpseudocode}
\graphicspath{ {../_bookdown_files/} {../} }

\newcommand\RR{\mathbb{R}}

\newcommand\mf{\mathbf}
\newcommand\mc{\mathcal}

\newcommand\R{\mathbf{R}}

\newcommand\E{\mathbb{E}}

\newcommand\Null{\operatorname{Null}}
\newcommand\Range{\operatorname{Range}}

\ifLuaTeX
  \usepackage{selnolig}  
\fi
\IfFileExists{bookmark.sty}{\usepackage{bookmark}}{\usepackage{hyperref}}
\IfFileExists{xurl.sty}{\usepackage{xurl}}{} 
\hypersetup{
  pdftitle={Geometric ergodicity of Gibbs samplers for non-Gaussian linear mixed effect models},
  pdfauthor={Elsiddig Awadelkarim, David Bolin, Xiaotian Jin, Alexandre B. Simas, and Jonas Wallin},
  hidelinks,
  pdfcreator={LaTeX via pandoc}}

\usepackage{amsthm}
\theoremstyle{plain}
\newtheorem{theorem}{Theorem}[section]
\newtheorem{lemma}{Lemma}[section]
\newtheorem{corollary}{Corollary}[section]
\newtheorem{proposition}{Proposition}[section]

\theoremstyle{definition}

\newtheorem{assumption}{Assumption}[section]
\theoremstyle{remark}
\newtheorem*{remark}{Remark}

\begin{document}
\begin{frontmatter}
  \title{Geometric ergodicity of Gibbs samplers for linear latent models with GIG variance mixtures}
  \runtitle{Geometric ergodicity of Gibbs samplers for LLnGMs}
  \thankstext{t1}{The authors are ordered alphabetically.}
  \thankstext{t2}{Corresponding author.}

  \begin{aug}
    \author[A]{\fnms{Elsiddig}~\snm{Awadelkarim}\thanksref{t1}\ead[label=e5]{elsiddig.awadelkarim@kaust.edu.sa}},
    \author[A]{\fnms{David}~\snm{Bolin}\ead[label=e1]{david.bolin@kaust.edu.sa}},
    \author[A]{\fnms{Xiaotian}~\snm{Jin}\thanksref{t2}\ead[label=e2]{xiaotian.jin@kaust.edu.sa}},
    \author[A]{\fnms{Alexandre B.}~\snm{Simas}\ead[label=e3]{alexandre.simas@kaust.edu.sa}},
    \and
    \author[B]{\fnms{Jonas}~\snm{Wallin}\ead[label=e4]{jonas.wallin@stat.lu.se}}
    \address[A]{CEMSE, King Abdullah University of Science and Technology, Saudi Arabia \printead[presep={,\ }]{e1,e2,e3,e5}}

    \address[B]{Department of Statistics, Lund University, Sweden \printead[presep={,\ }]{e4}}
  \end{aug}



  \begin{abstract}
    We study geometric ergodicity of the Gibbs sampler for linear latent non-Gaussian models (LLnGMs), a class of hierarchical models in which conditional Gaussian structure is preserved through generalized inverse Gaussian (GIG) variance-mixture augmentation. Two complementary routes to geometric ergodicity are developed for the marginal chain on the mixing variables. First, we show that the associated Markov operator is trace-class, and hence admits a spectral gap, over a large portion of the GIG parameter space. Second, for the remaining boundary and heavy-tail regimes, we establish geometric ergodicity via drift and minorization, subject to an explicit null-smallness condition that quantifies how the drift interacts with the null space of the observation operator. Together, these results cover the full GIG parameter space, including the normal-inverse Gaussian, generalized asymmetric Laplace, and Student-$t$ special cases. The geometric ergodicity of this chain underpins the consistency of Gibbs-based stochastic-gradient estimators for maximum likelihood estimation, and we provide conditions that make the required integrability checks transparent. Numerical experiments illustrate the theoretical findings, contrasting mixing efficiency across parameter regimes and probing the role of the null-smallness constant.
  \end{abstract}

  \begin{keyword}[class=MSC]
    \kwd[Primary ]{60J10}
    \kwd{65C40}
    \kwd[; secondary ]{37A25}
    \kwd{60J22}
  \end{keyword}

  \begin{keyword}
    \kwd{Geometric ergodicity}
    \kwd{Trace-class}
    \kwd{Gibbs sampler}
    \kwd{non-Gaussian models}
    \kwd{normal variance-mean mixtures}
  \end{keyword}

\end{frontmatter}

\section{Introduction}\label{chp:introduction}

\subsection{Geometric Ergodicity of Markov Chains}\label{ergometric-ergodicity}

Markov Chain Monte Carlo (MCMC) methods are sampling algorithms designed to approximate probability distributions. These methods have become fundamental tools in statistical computation across virtually all scientific disciplines.
For Markov chains, ergodicity ensures convergence to a unique stationary distribution independent of the initial distribution. This property is essential because it guarantees that time averages along a single trajectory converge to ensemble averages under the stationary distribution. Geometric ergodicity is a strengthening of ergodicity that quantifies the rate of convergence, guaranteeing that the chain converges to its stationary distribution at an exponential (geometric) rate.

A Markov chain $\{X_t\}_{t\geq 0}$ with state space $\mathcal{X}$ and stationary distribution $\pi$ is geometrically ergodic if constants $C > 0$, $\gamma \in (0, 1)$ exist such that for all probability measures $\mu \in \mathcal{L}_2(\pi)$,
\begin{equation*}
  \|\mu P^t - \pi\|_{\mathrm{TV}} \leq C \gamma^t,
  \label{eq:geometric-ergodicity}
\end{equation*}
where $P$ is the Markov operator associated to the chain, $t\in\mathbb{N}$, and $\|\cdot\|_{\mathrm{TV}}$ denotes the total variation distance. Here, $\mathcal{L}_2(\pi)$ is the space of signed measures on $\mathcal{X}$ that are absolutely continuous with respect to $\pi$ and have square-integrable Radon--Nikodym derivatives:
\begin{equation*}
\mathcal{L}_2(\pi) = \left\{\mu \in \mathrm{SM}(\mathcal{X}): \mu \ll \pi \text{ and } \int \left(\frac{d\mu}{d\pi}\right)^2 d\pi <\infty\right\},
\end{equation*}
where $\mathrm{SM}(\mathcal{X})$ denotes the set of signed measures on $\mathcal{X}$. The action of a measure $\mu$ on the operator $P$ is defined by
\begin{equation*}
\mu P(A) = \int P(x, A) \, d\mu(x)
\end{equation*}
for all measurable sets $A$. 
This property is essential in applications because it quantifies the rate at which the chain converges to stationarity. Specifically, geometric ergodicity guarantees that the number of steps required to achieve a given approximation accuracy scales logarithmically with that accuracy.

The transition kernel of a Markov chain naturally induces a linear integral operator, called the Markov operator, that acts on functions by averaging them over one-step transitions.
This operator is said to be trace-class if its singular values are summable, a condition that implies compactness.
When the chain is Harris ergodic, compactness guarantees that the spectrum of the operator, apart from the simple eigenvalue at one, lies strictly inside the unit disk, yielding a spectral gap and, consequently, geometric ergodicity \cite{pal2017traceclass}.

Geometric ergodicity of Gibbs samplers for high-dimensional linear models has been established using Lyapunov functions and minorization conditions on sub-level sets \cite{rosenthal1995}. This framework has been applied to global-local shrinkage priors, including the Bayesian Lasso \cite{park2008bayesian, khare2013}, Dirichlet-Laplace prior \cite{bhattacharya2015, pal2014}, and horseshoe prior \cite{carvalho2010horseshoe, bhattacharya2022}. The trace-class property for related models has been established in \cite{zhang2019, zhang2017, pal2017traceclass}.

In this work, we extend these results in two directions. First, we consider the generalized inverse Gaussian (GIG) family as the mixing distribution, which contains many of the priors studied previously, including the Normal-Gamma prior of \cite{pal2014} as a special case. Second, we work under a more general model structure than the standard linear regression setting considered in earlier work, so that the results are more more widely applicable. 

In the following subsections, we introduce the model class and introduce stochastic gradient descent (SGD) optimization as  motivating example for why it is important to derive geometric ergodicity for the model class.

\subsection{Latent Gaussian models and the non-Gaussian extensions} 
\label{sec:model}
Hierarchical models are fundamental statistical frameworks for modeling dependencies across multiple scales. Latent Gaussian models (LGMs) form a broad class of hierarchical models in which unobserved latent variables have Gaussian distributions. This class encompasses generalized linear mixed models \citeproc{fong2010}{Fong, Rue, and Wakefield 2010}, survival models \citeproc{martino2011}{Martino, Akerkar, and Rue 2011}, and spatial and spatio-temporal models \citeproc{INLA2018}{Bakka et al. 2018}, among others.

Inference for LGMs can be performed via MCMC sampling from the posterior distribution. For problems with large-scale latent Gaussian fields, the integrated nested Laplace approximation (INLA) methodology often provides more efficient and accurate inference (\cite{rue2017}).
LGMs have the following hierarchical structure:
\begin{equation}
  \begin{aligned}
    \text{Data}  \quad           &
    Y|W, \theta_1, \theta_2 \sim \prod_{i\in\mc{I}} \pi(Y_i|W_i, \theta_1), \\
    \text{Latent field} \quad    &
    W|\theta_2 \sim N(\mu(\theta_2), Q^{-1}(\theta_2)),           \\
    \text{Hyperparameters} \quad &
    \theta_1 \sim \pi(\theta_1), \quad \theta_2 \sim \pi(\theta_2).
  \end{aligned}
\end{equation}
This hierarchy consists of three stages: (i) the observation model, specifying the conditional distribution of data $Y$ given latent variables $W$ and hyperparameters $\theta_1$; (ii) the latent Gaussian field $W$ given hyperparameters $\theta_2$; and (iii) hyperparameter priors.

Despite the popularity and flexibility of Gaussian processes, they can be overly smooth for datasets with the presence of local spikes \citeproc{walder2020}{Walder and Hanks 2020}, and sudden jumps \citeproc{dhull2021}{Dhull and Kumar 2021}.
Non-Gaussian extensions of LGMs address limitations of Gaussian assumptions by allowing latent variables to exhibit 
skewness (asymmetric distributions),
heavy tails (frequent extreme values),
sample-path discontinuities (abrupt jumps),
and directional asymmetry (unequal variability across sides of the distribution).
Such features can improve prediction accuracy and better capture local data characteristics (\cite{paciorek2003}, \cite{bolin2014}, \cite{wallin2015}).
One approach to constructing non-Gaussian processes, introduced in \cite{bolin2014} and extended in \cite{asar2020}, employs variance-mixture representations.

In this work, we specifically consider the following non-Gaussian extension of LGMs with a Gaussian likelihood, which we refer to as a linear latent non-Gaussian model (LLnGM):
\begin{subequations} \label{eq:centered-combined}
  \begin{align}
    \text{Data}  \quad         &
    Y|W, V \sim N(X \beta + A W, \sigma_{\epsilon}^2 I),                 \\
    \text{Latent fields} \quad &
    K(\zeta) W | V \sim N(\mu V - \mu h, \text{diag}(\sigma^2 V)), \label{latent} \\
    \text{MixVars} \quad       &
    V_i \overset{\text{ind.}}{\sim} \text{GIG}(p,a,b).
  \end{align}
\end{subequations}
Here $Y = (Y_1, \ldots, Y_m)^\top$ is the data vector,
$W = (W_1, \ldots, W_n)^\top$ is the latent field, and 
${\zeta = (\zeta_1, \ldots, \zeta_p)^\top}$ are the hyperparameters of the square and invertible matrix $K$ which determines the specific model structure.
Further, $\mu\in\mathbb{R}$ and $\sigma>0$ are parameters controlling variance and skewness of the latent model, and $X$ is a design matrix with corresponding regression coefficients $\beta$. The matrix $A$ maps the latent field to the observation space, and $h$ is a fixed vector that is determined by the specific model choice (examples are given below). 
\textcolor{black}{We can also make the  framework Bayesian by adding priors on the parameters, but this is not our main focus and does not affect the later results.}

The final component of the model is the vector of mixing variables $V = (V_1, \ldots, V_n)^\top$, which are assumed to be independent and follow a generalized inverse Gaussian (GIG) distribution. The GIG density function is
\begin{equation*}
  f(x;p,a,b) = \frac{(a/b)^{p/2}}{2K_p(\sqrt{ab})} x^{p-1} \exp\left\{-\frac{1}{2}(ax + b/x)\right\}, \quad x > 0,
  \label{eq:gig}
\end{equation*}
where \(K_p\) is the modified Bessel function of the second kind of order \(p\), and the parameter space is $\Psi = \Psi_{I}\cup \Psi_{IG}\cup \Psi_{\Gamma}$ where
\begin{equation*}
  \Psi_{I} = \{a>0, b>0, p\in \RR\},\quad
  \Psi_{IG} =  \{a=0, b>0, p<0\}, \quad
   \Psi_{\Gamma} = \{a>0, b=0, p>0\}.
\end{equation*}
Here $\Psi_{I}$ is the interior of the parameter space, $\Psi_{IG}$ is a boundary case corresponding to the inverse Gamma distribution, and $\Psi_{\Gamma}$ is a boundary case corresponding to the gamma distribution.
The GIG family also includes the inverse Gaussian distribution (\(p=1/2\)) as an interior case.
An important property is the scaling behavior: if \(V \sim \text{GIG}(p, a, b)\), then \(cV \sim \text{GIG}(p, a/c, cb)\) for any \(c>0\).

When the GIG distribution is employed as the mixing distribution in a Gaussian variance mixture, the resulting marginal distribution belongs to the generalized hyperbolic (GH) family (\cite{barndorff1978}), a very flexible class that includes the hyperbolic, Student's $t$, generalized asymmetric Laplace (GAL), and normal-inverse Gaussian (NIG) distributions, among others. Specifically, a GH random variable $Z$ admits the representation
\begin{equation}
  Z = \delta + \mu V + \sigma \sqrt{V} U, 
  \label{eq:mixture}
\end{equation}
where $\delta\in\mathbb{R}$, $\mu\in\mathbb{R}$, and $\sigma>0$ are location, drift, and scale parameters, $V \sim \text{GIG}(p, a, b)$, and $U \sim N(0,1)$ is independent of $V$. The full GH distribution has six parameters $(\delta, \mu, \sigma, p, a, b)$; however, as our model has fixed effects which can determine the location we fix $\delta = -\mu h$ for the construction in \eqref{eq:centered-combined} to avoid over parameterization. 
Table \ref{tab:GH} lists some special cases obtained by varying the GIG parameters $(p, a, b)$.

\begin{table}[t]
  \centering
  \caption{\label{tab:GH}Special cases of the GH distribution and their corresponding GIG mixing distributions}
  \centering
  \begin{tabular}[t]{lll}
    \toprule
    Distribution of $X$ & Distribution of $V$           & GIG form of $V$            \\
    \midrule
    t                   & Inv-Gamma($\nu, \nu$)         & GIG($-\nu, 0, 2\nu$)       \\
    NIG                 & Inv-Gaussian($\alpha, \beta$) & GIG($-0.5, \alpha, \beta$) \\
    GAL                 & Gamma($\alpha, \beta$)        & GIG($\alpha, 2\beta, 0$)   \\
    Cauchy              & Inv-Gamma($0.5, \nu$)         & GIG($-0.5, 0, 2\nu$)       \\
    \bottomrule
  \end{tabular}
\end{table}

By specifying different forms of the operator $K$, this framework accommodates a variety of widely used latent non-Gaussian models, including auto-regressive, random walk, SPDE Matérn, and space-time models, see  \cite{cabral2024} for several examples operators $K$ and corresponding models. For each of these models, the vector $h$ is given explicitly. For example, for auto-regressive models, $h=1$, whereas $h$ is determined by the discretization of SPDE Mat\'ern models \cite{bolin2014}. 

A recurring challenge in non-Gaussian latent modeling is that richer marginal behavior typically comes at the cost of losing scalable Gaussian computational tools. LLnGMs circumvent this trade-off by introducing mixing variables that render the latent field conditionally Gaussian. As a result, sparse operator structures (e.g., from autoregressive formulations) can still be used for computationally efficient linear algebra, while the non-Gaussian features are localized in one-dimensional updates for the mixing components. 

\subsection{Stochastic Gradient Descent}\label{sec:sgd}
A natural question is how the model \eqref{eq:centered-combined} can be fitted to data. In certain scenarios, the variational Bayes approach of \cite{cabral2024} can be used, \textcolor{black}{and Monte Carlo Expectation Maximization (MCEM) methods have also been used for certain non-Gaussian models in this class \cite{bolin2014, wallin2015}}. However, the most widely used approach is maximum-likelihood estimation based on a stochastic gradient descent (SGD) method, \cite[see, e.g.][]{asar2020, bolin2020}.
In fact, the general software \texttt{ngme2} (\url{https://github.com/davidbolin/ngme2}) which implements many latent non-Gaussian models of the form \eqref{eq:centered-combined} uses this approach.

SGD, \textcolor{black}{introduced by \cite{robbins1951},} is a widely used iterative optimization method for large-scale problems in machine learning, neural networks, and statistical inference. To minimize a function $f(\theta)$, $\theta \in \RR^n$, using SGD, we start with an initial parameter vector $\theta^{(0)}$ and update iteratively via
\begin{equation}
  \theta^{(t+1)} = \theta^{(t)} - \gamma_t \widehat{\nabla f} (\theta^{(t)}),
  \label{eq:sgd}
\end{equation}
where $\gamma_t > 0$ is the step size at iteration $t$ and $\widehat{\nabla f}(\theta)$ is a random vector satisfying $E[\widehat{\nabla f}(\theta)] = \nabla f(\theta)$. Under appropriate conditions on the step size $\gamma_t$ and relatively mild assumptions, the iterates converge almost surely to a minimizer of $f$ \cite{bottou1998}.

In the context of LLnGMs, the function $f$ is the negative log-likelihood, and sampling of the corresponding random vector $Q_n(\theta)$ is done via Gibbs sampling. The key property that enables Gibbs sampling in this scenario is that 
conditional distribution of $V$ given $X$ remains GIG-distributed. The details of this are given in Section \ref{sec:gibbs} and Section \ref{chp:estimation}. However, because a Gibbs sampler is used in the estimation, having geometric ergodicity justifies using small batches of samples to estimate gradients and provides theoretical guarantees for the estimator.

\begin{color}{black}
It is important to note that likelihood-based estimation via SGD may break down for certain
non-Gaussian noise models.
In particular, when the latent field is driven by generalized asymmetric Laplace (GAL) noise,
the corresponding variance-mixing variable $V$ follows a Gamma distribution with small shape parameter (i.e., $p<1/2$).
As discussed in Section~\ref{sec:L1}, this leads to severe numerical and statistical
difficulties in estimation.
This issue is especially pronounced in SPDE-based models.
Under mesh refinement, the effective shape parameter $p$ decreases with the discretization level,
and for any fixed $p>0$ of the driving noise for the continuous model, it holds that $p\to 0$ for the noise in discretized model as the mesh is refined \cite{wallin2015}.
Consequently, the mixing distribution becomes increasingly degenerate, causing instability in
both likelihood evaluation and gradient estimation.
This phenomenon was already observed in \cite{wallin2015}, which resulted in stability issues for the proposed MCEM algorithm.
One key contribution of this work is that we later show that these issues can be avoided by changing from the so-called centered parametrization in \eqref{eq:centered-combined} to an alternative non-centered parametrization introduced in the next section.
\end{color}

\subsection{Main contributions and outline}
The main contributions of this work are as follows.
\begin{enumerate}
  \item
        We establish geometric ergodicity and the trace-class property for Gibbs samplers applied to the LLnGM  where non-Gaussianity is introduced via the GIG mixing variables. The results allow for general (invertible) operator matrices $K(\zeta)$ and cover the full GIG parameter space, including the NIG, GAL, and Student-$t$ special cases.
  \item
        We analyze SGD estimation via Fisher's identity and identify integrability conditions that are transparent in the non-centered parameterization introduced below, but delicate in original the centered formulation in \eqref{eq:centered-combined}.
        In particular, in the centered parameterization the complete-data score may fail to be integrable under the posterior, preventing a direct application of ergodic theorems, which means that one cannot prove that the SGD method converges. We show that these issues are avoided by using a non-centered parameterization, which thus allows for deriving convergence of the SGD method based on the ergodicity results.
  \item
        We present numerical experiments that illustrate the mixing behavior of the proposed Gibbs samplers and empirically validate the theoretical results.
\end{enumerate}

The outline of the paper is as follows. In Section~\ref{chp:gibbs}, the Gibbs sampler is introduced and the main results are stated. 
The application to SGD is introduced in Section~\ref{chp:estimation}.
The proofs of the main results are given in Section~\ref{chp:proof}  and simulation studies that validate the results are presented in Section~\ref{chp:simulation-study}. The article concludes with a discussion in Section~\ref{sec:discussion} and technical details are provided in three appendices.
As previously mentioned, the R package \texttt{ngme2} implements the proposed estimation approach for the model class, and all numerical experiments are reproducible through this package.

\section{The Gibbs sampler and main results of geometric ergodicity}\label{chp:gibbs}

\subsection{The Gibbs sampler}\label{sec:gibbs}

In the LLnGM, we have unobserved mixing variables $V$ and latent fields $W$. Our goal is to sample from the posterior distribution of the unobserved variables, i.e., $V, W | Y$. It is difficult to sample from the full conditional distribution directly, since we do not have closed-form expressions. However, we can use the Gibbs sampler to sample from the conditional distributions iteratively. We propose the Gibbs sampling algorithm stated in Algorithm~\ref{alg:gibbs_centered}.

\begin{algorithm}[b]
\caption{Gibbs sampler for the LLnGM}\label{alg:gibbs_centered}
\begin{algorithmic}[1]
\State For centered parameterization, $U = W$. For non-centered parameterization $U = M$.
\State Initialize $V^{(0)}$.
\For{$t = 1, 2, \ldots, T$}
    \State Sample $U^{(t)}$ from the conditional distribution $U|V^{(t-1)}, Y$.
    \State Sample $V^{(t)}$ from the conditional distribution $V|U^{(t)}, Y$.
\EndFor
\end{algorithmic}
\end{algorithm}

The formulation in~\eqref{eq:centered-combined}, which we henceforth refer to as the \emph{centered parameterization}, can be inconvenient for our theoretical analysis. To streamline subsequent proofs, we introduce an equivalent \emph{non-centered parameterization} via a transformed latent field.
Specifically, we define the auxiliary latent variable
  $M := K(\zeta)W + \mu h$,
so that the original latent field is recovered via the bijective mapping
  $W = K(\zeta)^{-1}(M-\mu h)$.
Under this transformation, the hierarchical model becomes
\begin{subequations}\label{eq:non-centered-combined}
  \begin{align}
    \text{Data} \quad             &
    Y \mid M,V \sim N\!\bigl(X \beta + A K(\zeta)^{-1}(M-\mu h),\ \sigma_\epsilon^2 I \bigr), \\
    \text{Latent fields} \quad    &
    M \mid V \sim N\!\bigl(\mu V,\ \mathrm{diag}(\sigma^2 V)\bigr), \label{eq:latent-nc}     \\
    \text{Mixing variables} \quad &
    V_i \ \overset{\mathrm{ind.}}{\sim}\ \mathrm{GIG}(p,a,b), \qquad i=1,\dots,n.
  \end{align}
\end{subequations}

Because the transformation between $W$ and $M$ is a diffeomorphism for each fixed $\zeta$, the centered and non-centered parameterizations induce the same marginal likelihood for $Y$ and thus define equivalent statistical models. 
As a secondary benefit, the non-centered representation also simplifies stochastic-gradient inference (Section~\ref{sec:L1}) by improving the integrability properties of certain score components under the posterior.
For the non-centered parameterization in \eqref{eq:non-centered-combined}, the Gibbs sampling algorithm is analogous except that $M$ is sampled instead of $W$.

We now specify the conditional distributions required for both parameterizations. To keep notation simple, we fix some hyperparameter $\zeta$ and drop $\zeta$ from the notation, that is, we simply write $K$ instead of $K(\zeta)$.

\begin{proposition}
\label{prop:conditional-distributions}
  Under the centered parameterization \eqref{eq:centered-combined}, the conditional distribution of $W|V, Y$ is
  \begin{align}
    W|V, Y \sim N\!\left(Q^{-1} \left(\frac{1}{\sigma^2} K^{\top} D_{V}^{-1} \mu (V-h) + \frac{1}{\sigma_{\epsilon}^2} A^\top (Y - X \beta) \right),\ Q^{-1}\right),
    \label{eq:centered-conditional}
  \end{align}
  where $Q = \sigma^{-2} K^{\top} D_{V}^{-1} K + \sigma_{\epsilon}^{-2} A^\top A$ and $D_{V} = \text{diag}(V)$.

  Under the non-centered parameterization \eqref{eq:non-centered-combined}, the conditional distribution of $M|V, Y$ is
  \begin{align}
    M|V, Y \sim N\!\left(
    Q^{-1} \left( \sigma_{\epsilon}^{-2} B^\top (Y - X \beta + \mu B h ) + \frac{\mu}{\sigma^2} \mathbf{1} \right),
    \ Q^{-1}\right),
    \label{eq:non-centered-conditional}
  \end{align}
  where $Q = \sigma_{\epsilon}^{-2} B^\top B + \sigma^{-2} D_{V}^{-1}$, $B = A K^{-1}$, and $\mathbf{1}$ is the vector of ones.

  \medskip
  Moreover, in both parameterizations the full conditional of $V$ factorizes over components and remains in the GIG family.
  Since $Y\mid W,V$ and $Y\mid M,V$ do not depend on $V$, we have
  \begin{equation}
    V_i \mid W,Y \ \sim\ \mathrm{GIG}\!\left(p-\tfrac12,\ a + \frac{\mu^2}{\sigma^2},\ b + \frac{((KW)_i+\mu h_i)^2}{\sigma^2}\right),
    \qquad i=1,\dots,n,
    \label{eq:V_given_W}
  \end{equation}
  and
  \begin{equation}
    V_i \mid M,Y \ \sim\ \mathrm{GIG}\!\left(p-\tfrac12,\ a + \frac{\mu^2}{\sigma^2},\ b + \frac{M_i^2}{\sigma^2}\right),
    \qquad i=1,\dots,n,
    \label{eq:V_given_M}
  \end{equation}
  independently over $i$.
\end{proposition}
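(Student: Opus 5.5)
The plan is a direct Bayes'-rule computation: for each full conditional, write the joint density as a product of the hierarchical factors, keep only the factors that involve the variable being updated, and identify the resulting kernel. For the Gaussian updates this means completing the square in the exponent. For the $V$ updates it means matching the kernel to the GIG form $x^{p-1}\exp\{-\tfrac12(ax+b/x)\}$.

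\emph{Centered Gaussian conditional.} Since $K$ is invertible, \eqref{latent} gives the density of $W\mid V$ as a change of variables:
\[
\log \pi(W\mid V) = -\tfrac{1}{2\sigma^2}\bigl(KW-\mu(V-h)\bigr)^\top D_V^{-1}\bigl(KW-\mu(V-h)\bigr) + \mathrm{const}.
\]
The constant includes $\log|\det K|$ and $-\tfrac12\sum_i\log V_i$. The data model adds $-\tfrac{1}{2\sigma_\epsilon^2}\|Y-X\beta-AW\|^2$. Expanding both terms in $W$ gives the quadratic form $-\tfrac12 W^\top Q W$ with $Q=\sigma^{-2}K^\top D_V^{-1}K+\sigma_\epsilon^{-2}A^\top A$. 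It also gives the linear term $W^\top\bigl(\sigma^{-2}K^\top D_V^{-1}\mu(V-h)+\sigma_\epsilon^{-2}A^\top(Y-X\beta)\bigr)$. Completing the square yields \eqref{eq:centered-conditional}. Here $Q$ is positive definite because $K^\top D_V^{-1}K$ already is.

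\emph{Non-centered Gaussian conditional.} The argument is the same. The prior on $M$ contributes $-\tfrac12 M^\top\sigma^{-2}D_V^{-1}M+\mu\sigma^{-2}\mathbf 1^\top M$, using $D_V^{-1}V=\mathbf 1$. The likelihood is written with residual $Y-X\beta+\mu Bh-BM$, where $B=AK^{-1}$. It contributes $-\tfrac12 M^\top\sigma_\epsilon^{-2}B^\top B M+\sigma_\epsilon^{-2}M^\top B^\top(Y-X\beta+\mu Bh)$. Completing the square gives \eqref{eq:non-centered-conditional}.

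\emph{Conditionals of $V$.} Given $W$ (resp.\ $M$), the likelihood of $Y$ does not involve $V$, so $\pi(V\mid W,Y)\propto\pi(W\mid V)\pi(V)$. Because $D_V$ is diagonal and the $V_i$ are independent a priori, this factorizes over $i$. Set $z_i=(KW)_i+\mu h_i$ (resp.\ $z_i=M_i$). The $i$-th factor is
\[
V_i^{-1/2}\exp\Bigl\{-\tfrac{(z_i-\mu V_i)^2}{2\sigma^2V_i}\Bigr\}\,V_i^{p-1}\exp\Bigl\{-\tfrac12\bigl(aV_i+b/V_i\bigr)\Bigr\}.
\]
The factor $V_i^{-1/2}$ comes from the Gaussian normalizing constant. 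Expanding $(z_i-\mu V_i)^2/V_i=z_i^2/V_i-2\mu z_i+\mu^2V_i$, the middle term does not involve $V_i$ and is absorbed into the constant. What remains is $V_i^{(p-1/2)-1}\exp\{-\tfrac12[(a+\mu^2/\sigma^2)V_i+(b+z_i^2/\sigma^2)/V_i]\}$, which is the kernel in \eqref{eq:V_given_W} and \eqref{eq:V_given_M}.

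\emph{Main obstacle: properness at the boundary of $\Psi$.} The only real subtlety is checking that the updated parameters lie in $\Psi$, so that each kernel normalizes to a GIG density.
\begin{itemize}
\item In $\Psi_I$ both updated parameters are positive, so the kernel normalizes.
\item In $\Psi_\Gamma$ ($b=0$), the updated second-scale parameter is $b+z_i^2/\sigma^2=z_i^2/\sigma^2$. This is positive almost surely because $z_i$ has a continuous (Gaussian) conditional law. The updated point is then interior, so the kernel normalizes for every $p$, including $p-\tfrac12\le 0$.
\item In $\Psi_{IG}$ ($a=0$): if $\mu\neq0$, the updated first parameter is $\mu^2/\sigma^2>0$ and the point is interior. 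If $\mu=0$, the updated first parameter stays $0$, and we need $p-\tfrac12<0$. This holds since $p<0$, and the updated $b$ remains positive.
\end{itemize}
So in every case the conditional is a proper GIG distribution, with independence over $i$ following from the factorization.
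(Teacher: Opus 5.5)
Your proposal is correct and follows essentially the same route as the paper's proof. For the Gaussian conditionals you complete the square, using $D_V^{-1}V=\mathbf 1$ in the non-centered case, and for the $V$ updates you match each coordinate's kernel to the GIG form after dropping the $V_i$-free cross term $-2\mu z_i$. Your extra check that the updated GIG parameters lie in $\Psi$ is a point the paper's proof leaves implicit: it uses that $z_i^2>0$ almost surely when $b=0$, and that $p-\tfrac12<0$ when $a=\mu=0$.
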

The proof is given in Appendix \ref{app:proofs-model}.

\subsection{Gibbs sampling from linear operator's viewpoint}\label{viewpoint}

The Gibbs sampler for the LLnGM induces a Markov chain $(V^{(t)})$ with transition density:
\begin{equation}
  k(V, \tilde{V}) = \int_{\RR^n} \pi(\tilde{V} | W, Y)
  \pi(W | V, Y) dW,
  \label{eq:transition}
\end{equation}
where $Y$ is the observation, $W$ is the latent field, and $V$ is the mixing field.
Let $L_2(\pi)$ denote the Hilbert space of functions that are square integrable with respect to the target posterior density $\pi(V|Y)$:
\begin{equation*}
  L_2(\pi):=\left\{
  f:\RR^n_+ \rightarrow \RR;
  \int_{\RR^n_+} (f(V))^2 \pi(V|Y) dV < \infty
  \right\},
\end{equation*}
where $\RR^n_+ = \{ V \in \RR^n: V_i > 0, i=1, \ldots, n \}$, equipped with the standard inner product:
\begin{equation*}
  (f, g) := (f, g)_{L_2(\pi)} = \int_{\RR^n_+} f(V) g(V) \pi(V|Y) dV.
\end{equation*}
Note that there is a natural identification between $L_2(\pi)$ and $\mathcal{L}_2(\pi)$, where each function $f \in L_2(\pi)$ is identified with the measure $\mu_f(dV) = f(V) \pi(V|Y) dV = f(V) \pi(dV)$.
The Markov transition density $k$ (with respect to the Lebesgue measure) induces a linear operator $\Lambda$ on $L_2(\pi)$:
\begin{equation}
  (\Lambda f)(V) = \int_{\RR^n_+} k(V, \tilde{V}) f(\tilde{V}) d \tilde{V}.
  \label{eq:Lambda-operator}
\end{equation}

We now obtain fundamental properties of the operator $\Lambda$.

\begin{proposition}[Properties of the $V$-marginal Markov operator]\label{prp:Lambda-properties}
  Let $\Lambda$ be defined by \eqref{eq:Lambda-operator} with transition density $k$ given in \eqref{eq:transition},
  and let $\pi$ denote the invariant probability measure of the $V$-chain with density $\pi(\cdot\mid Y)$.
  Then $\Lambda$ satisfies $\Lambda\mf{1}=\mf{1}$ and is a contraction on $L_2(\pi)$, i.e., $\|\Lambda\|\le 1$.
  Moreover, $\Lambda$ is self-adjoint and positive semidefinite on $L_2(\pi)$; consequently,
  $\mathrm{spec}(\Lambda)\subset[0,1]$.
\end{proposition}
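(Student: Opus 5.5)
The plan is to use the standard data-augmentation structure of the two-block Gibbs sampler. Write $\pi(V,W\mid Y)$ for the joint posterior, with marginals $\pi(V\mid Y)$ and $\pi(W\mid Y)$. The non-centered case is identical with $M$ in place of $W$. By \eqref{eq:transition}, the kernel $k(V,\tilde V)$ is a density in $\tilde V$. Hence $\Lambda\mf{1}(V)=\int\!\int \pi(\tilde V\mid W,Y)\pi(W\mid V,Y)\,dW\,d\tilde V=1$ by Fubini and the fact that each conditional density integrates to one.

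For the contraction property I would first record invariance. Using $\pi(V\mid Y)\pi(W\mid V,Y)=\pi(V,W\mid Y)$, we get $\int \pi(V\mid Y)k(V,\tilde V)\,dV=\int \pi(W\mid Y)\pi(\tilde V\mid W,Y)\,dW=\pi(\tilde V\mid Y)$. Then Jensen (or Cauchy--Schwarz) for the probability kernel gives $(\Lambda f(V))^2\le \int k(V,\tilde V) f(\tilde V)^2\,d\tilde V$. Integrating against $\pi(V\mid Y)$ and applying invariance yields $\|\Lambda f\|^2\le\|f\|^2$. This also shows that $\Lambda$ is well defined on $L_2(\pi)$.

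The main step is a factorization of $\Lambda$. Define $P_1:L_2(\pi(W\mid Y))\to L_2(\pi)$ by $(P_1 g)(V)=\E[g(W)\mid V,Y]=\int g(W)\pi(W\mid V,Y)\,dW$. Then $\Lambda f = P_1 P_1^{*} f$, where $(P_1^{*}f)(W)=\E[f(V)\mid W,Y]$. To verify that $P_1^{*}$ is the adjoint, compute
\[
(P_1 g,f)_{L_2(\pi)}=\int\!\!\int g(W)f(V)\,\pi(V,W\mid Y)\,dW\,dV=(g,P_1^{*}f)_{L_2(\pi(W\mid Y))},
\]
again via Bayes' rule for the joint density. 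With this factorization, $(\Lambda f,g)=(P_1^{*}f,P_1^{*}g)$, which is symmetric in $f$ and $g$, so $\Lambda$ is self-adjoint. Taking $g=f$ gives $(\Lambda f,f)=\|P_1^{*}f\|^2\ge 0$, so $\Lambda$ is positive semidefinite.

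For a bounded self-adjoint operator the spectrum is real and contained in $[\inf_{\|f\|=1}(\Lambda f,f),\,\sup_{\|f\|=1}(\Lambda f,f)]$. By positivity and the contraction bound this interval lies in $[0,1]$, so $\mathrm{spec}(\Lambda)\subset[0,1]$.

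The main obstacle is justifying the Fubini/Tonelli interchanges and the Bayes-rule manipulations for all $f\in L_2(\pi)$. To handle this, I would work first with nonnegative $f$, where Tonelli applies directly, and then split a general $f$ as $f=f^+-f^-$. Integrability follows from the contraction bound: since $L_2\subset L_1$ for a probability measure, every $f\in L_2(\pi)$ is integrable, and so all the integrals above are finite. I also need the joint density to be proper. This holds because, given $V$, the conditional of $W$ in Proposition~\ref{prop:conditional-distributions} is Gaussian with $Q$ positive definite, and the GIG prior on $V$ is proper.
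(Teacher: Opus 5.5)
Your proof is correct and follows essentially the same route as the paper. You get $\Lambda\mf{1}=\mf{1}$ from normalization, the contraction from Jensen plus invariance, and self-adjointness and positivity from the identity $(\Lambda f,g)=\E[\E[f(V)\mid W,Y]\,\E[g(V)\mid W,Y]\mid Y]$; your factorization $\Lambda=P_1P_1^{*}$ is that same identity restated in operator language, and your explicit invariance, Tonelli/Fubini and numerical-range steps add rigor without changing the argument.
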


\begin{proof}
  Since $k(V,\tilde V)$ is a Markov transition density, $k\ge 0$ and
  $\int k(V,\tilde V)\,d\tilde V=1$ for all $V$, which implies $\Lambda\mf{1}=\mf{1}$.
  We start by establishing the contraction property.
  For $\pi$-a.e.\ $V$, by Jensen's inequality,
  \[
    (\Lambda f(V))^2=\Big(\int k(V,\tilde V)f(\tilde V)\,d\tilde V\Big)^2
    \le \int k(V,\tilde V)f(\tilde V)^2\,d\tilde V.
  \]
  Integrating both sides against $\pi(V\mid Y)\,dV$ and using the invariance property
  $\int k(V,\tilde V)\pi(V\mid Y)\,dV=\pi(\tilde V\mid Y)$ yields
  $\|\Lambda f\|_{L_2(\pi)}^2\le \|f\|_{L_2(\pi)}^2$.

  Next, we establish self-adjointness and positive semidefiniteness.
  For $f,g\in L_2(\pi)$,
  \begin{align*}
    (\Lambda f,g)_{L_2(\pi)}
     & =\iint k(V,\tilde V)\,f(\tilde V)\,g(V)\,\pi(V\mid Y)\,d\tilde V\,dV                              \\
     & =\iiint f(\tilde V)\,g(V)\,\pi(\tilde V\mid W,Y)\,\pi(W\mid V,Y)\,\pi(V\mid Y)\,dW\,d\tilde V\,dV \\
     & =\iiint f(\tilde V)\,g(V)\,\pi(\tilde V\mid W,Y)\,\pi(V\mid W,Y)\,\pi(W\mid Y)\,dW\,d\tilde V\,dV \\
     & =\int \pi(W\mid Y)\Big[\int f(\tilde V)\pi(\tilde V\mid W,Y)\,d\tilde V\Big]
    \Big[\int g(V)\pi(V\mid W,Y)\,dV\Big]\,dW                                                            \\
     & =\mathbb E\!\left[\mathbb E\!\left[f(V)\mid W,Y\right]\,
      \mathbb E\!\left[g(V)\mid W,Y\right]\ \Big|\ Y\right],
  \end{align*}
  which is symmetric in $(f,g)$, establishing self-adjointness.
  Setting $g=f$ gives
  \[
    (\Lambda f,f)_{L_2(\pi)}
    =\mathbb E\!\left[\Big(\mathbb E[f(V)\mid W,Y]\Big)^2\ \Big|\ Y\right]\ge 0,
  \]
  which establishes positive semidefiniteness.
  Finally, since $\Lambda$ is self-adjoint and a contraction, $\mathrm{spec}(\Lambda)\subset[-1,1]$,
  and positive semidefiniteness further restricts $\mathrm{spec}(\Lambda)\subset[0,1]$.
\end{proof}

\begin{proposition}[Aperiodicity and Harris recurrence]\label{prp:chain-ergodicity}
  Assume the transition density satisfies $k(V,\tilde V)>0$ for all $V,\tilde V\in\RR^n_+$.
  Then the $V$-marginal chain is $\psi$-irreducible, aperiodic, and Harris recurrent.
\end{proposition}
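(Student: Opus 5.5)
Positivity of the transition density gives every property in turn: irreducibility directly, aperiodicity by ruling out a cycle, and Harris recurrence from a standard result for chains with an invariant probability and an absolutely continuous kernel. Throughout, take the irreducibility measure $\psi$ to be Lebesgue measure restricted to $\RR^n_+$, which is equivalent to $\pi(\cdot\mid Y)$ because $\pi(V\mid Y)>0$ on $\RR^n_+$. The posterior is positive there because the GIG prior density and the Gaussian likelihood are both strictly positive.

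\emph{Irreducibility.} Let $A\subset\RR^n_+$ be measurable with positive Lebesgue measure. For any starting point $V$,
\[
P(V,A)=\int_A k(V,\tilde V)\,d\tilde V>0,
\]
since the integrand is strictly positive on a set of positive measure. So every set of positive $\psi$-measure is reached in one step from every initial state, which gives $\psi$-irreducibility.

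\emph{Aperiodicity.} Suppose the chain had period $d\ge 2$, with a cyclic decomposition $D_1,\dots,D_d$ of disjoint sets of positive $\psi$-measure such that $P(V,D_{i+1})=1$ for $V\in D_i$. Take $V\in D_1$. Then $P(V,D_1)>0$ by the one-step positivity above, while the cycle structure forces $P(V,D_1)=0$ because $D_1\cap D_2=\emptyset$. This contradiction shows the chain is aperiodic. An equivalent route: every set of positive measure is small with $m=1$, and $P(V,C)>0$ for $V\in C$, so the period is $1$.

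\emph{Harris recurrence.} This is the step that needs an outside result. An irreducible chain with an invariant probability measure is positive recurrent, hence Harris recurrent off a $\psi$-null set (Meyn--Tweedie, Prop.~10.1.1 and Thm.~9.0.1). The invariance of $\pi(\cdot\mid Y)$ holds by construction of the Gibbs sampler, and was already used in the proof of Proposition~\ref{prp:Lambda-properties}. To remove the exceptional null set, I would invoke the criterion that an irreducible recurrent chain whose kernel $P(V,\cdot)$ is absolutely continuous with respect to $\psi$ for every $V$ is Harris recurrent (Tierney 1994, Cor.~1; Meyn--Tweedie Thm.~9.1.5). Absolute continuity holds here because $P(V,\cdot)$ has the density $k(V,\cdot)$ for every $V$, not merely for almost every $V$. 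Alternatively, one can verify Harris recurrence directly. Let $N$ be the null set where $\pi$-recurrence might fail. Starting from any $V$, the first step lands in $N$ with probability $\int_N k(V,\tilde V)\,d\tilde V=0$, so the chain enters the good set after one step almost surely, and Harris recurrence follows from every starting point.
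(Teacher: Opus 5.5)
Your proof is correct and follows the same route as the paper, which derives irreducibility and aperiodicity from $k>0$ by citing Roberts--Smith and obtains Harris recurrence from the invariance of $\pi(\cdot\mid Y)$ via Tierney (1994); your explicit use of the fact that $P(V,\cdot)$ has a density for every $V$ is exactly what upgrades positive recurrence to Harris recurrence, a step the paper's one-line appeal to invariance leaves implicit. One minor correction: your aside that every set of positive Lebesgue measure is small with $m=1$ is false in general (for instance, $\RR^n_+$ itself is small only under a Doeblin condition), but your cyclic-decomposition argument for aperiodicity does not rely on it.
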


\begin{proof}
  Strict positivity of $k$ implies $\psi$-irreducibility (e.g., with $\psi$ being the Lebesgue measure on $\RR^n_+$)
  and aperiodicity; see, e.g., \cite{roberts1994}.
  Since the chain admits the invariant probability measure $\pi(\cdot\mid Y)$, it follows that the chain is Harris recurrent;
  see \cite[Corollary~2]{tierney1994}.
\end{proof}

By \cite[Theorem~2.1]{roberts1997}, the Gibbs chain is geometrically ergodic if and only if the Markov operator $\Lambda$ admits a \emph{positive spectral gap} at $1$:
\[
  \sup\bigl\{|\lambda|:\lambda\in \mathrm{spec}(\Lambda),\ \lambda\neq 1\bigr\}<1 .
\]
Proposition~\ref{prp:Lambda-properties} shows that the operator $\Lambda$ defined in \eqref{eq:Lambda-operator} is self-adjoint, positive semidefinite, and a contraction on $L_2(\pi)$ satisfying $\Lambda \mf{1}=\mf{1}$; consequently, $\mathrm{spec}(\Lambda)\subset[0,1]$ with $1$ being an eigenvalue.
Moreover, Proposition~\ref{prp:chain-ergodicity} establishes that the chain is Harris recurrent; combined with the existence of the unique invariant probability measure $\pi(\cdot\mid Y)$, this ensures that the eigenvalue $1$ is simple.

\begin{proposition}[Trace-class operators have an $L_2(\pi)$ spectral gap]\label{prop:tc-implies-geo}
  Assume that $\Lambda$ is the Markov operator on $L_2(\pi)$ induced by the $V$-marginal transition density $k$
  as in \eqref{eq:Lambda-operator}. If $\Lambda$ is trace-class on $L_2(\pi)$, then
  $\Lambda$ admits a positive spectral gap in $L_2(\pi)$ at $1$; that is,
  \[
    \lambda^\star := 1-\|\Lambda_0\| \in (0,1],
    \qquad
    \Lambda_0:=\Lambda\big|_{L_{2,0}(\pi)},\quad L_{2,0}(\pi):=\mf{1}^\perp.
  \]
  Consequently, the chain $(V^{(t)})$ is geometrically ergodic in $L_2(\pi)$.
  Moreover, for any probability measure $\nu \ll \pi$ with density $d\nu/d\pi \in L_2(\pi)$, that is, for any probability measure $\nu\in\mathcal{L}_2(\pi)$,
  there exists a finite constant $C(\nu)$ such that
  \[
    \|\nu\Lambda^m-\pi\|_{\mathrm{TV}}\le C(\nu)\,(1-\lambda^\star)^m,\qquad m\ge 0.
  \]
\end{proposition}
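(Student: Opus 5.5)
The plan is to move from trace-class to compactness, then use the spectral structure from Proposition~\ref{prp:Lambda-properties} and the simplicity of the eigenvalue $1$ to obtain a gap. The final step converts the $L_2(\pi)$ gap into a total variation bound.

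\emph{Compactness and spectral structure.} A trace-class operator is compact, since its singular values are summable and hence tend to zero. By Proposition~\ref{prp:Lambda-properties}, $\Lambda$ is self-adjoint, positive semidefinite and a contraction, so the spectral theorem for compact self-adjoint operators applies. It gives $\mathrm{spec}(\Lambda)\subset[0,1]$, consisting of $0$ together with a sequence of eigenvalues of finite multiplicity that can accumulate only at $0$.

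\emph{Isolating the eigenvalue $1$.} The discussion after Proposition~\ref{prp:chain-ergodicity} shows that $1$ is a simple eigenvalue with eigenspace spanned by $\mf{1}$. Here is where I would be careful: an eigenfunction $f$ with $\Lambda f=f$ in $L_2(\pi)$ is a bounded-in-$L_2$ harmonic function. Harris recurrence forces such an $f$ to be $\pi$-a.s. constant, which I would justify via \cite{tierney1994}, or by noting that $(\Lambda f,f)=\|f\|^2$ forces $\mathbb E[f(V)\mid W,Y]=f(V)$ a.s., combined with positivity of $k$.

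Because $\Lambda$ is self-adjoint and fixes $\mf{1}$, the subspace $L_{2,0}(\pi)=\mf{1}^\perp$ is invariant. The restriction $\Lambda_0$ is again compact, self-adjoint and positive, so
\[
\|\Lambda_0\|=\sup\mathrm{spec}(\Lambda_0).
\]
By compactness, a nonzero supremum of the spectrum is attained as an eigenvalue. If $\|\Lambda_0\|=1$, then $1$ would be an eigenvalue of $\Lambda_0$, which contradicts simplicity. Hence $\|\Lambda_0\|<1$ and $\lambda^\star\in(0,1]$. The equivalence in \cite[Theorem~2.1]{roberts1997} then yields geometric ergodicity.

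\emph{Total variation bound.} Let $\nu$ have density $g=d\nu/d\pi\in L_2(\pi)$. Self-adjointness gives that $\nu\Lambda^m$ has density $\Lambda^m g$ with respect to $\pi$. Since $g-\mf{1}\in L_{2,0}(\pi)$, Cauchy--Schwarz gives
\[
\|\nu\Lambda^m-\pi\|_{\mathrm{TV}}\le \tfrac12\|\Lambda^m g-\mf{1}\|_{L_1(\pi)}\le \tfrac12\|\Lambda_0^m(g-\mf{1})\|_{L_2(\pi)}\le \tfrac12\|g-\mf{1}\|_{L_2(\pi)}(1-\lambda^\star)^m,
\]
so one may take $C(\nu)=\tfrac12\|g-\mf{1}\|_{L_2(\pi)}$.

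The main obstacle is the rigorous identification of the $1$-eigenspace with the constants. Everything else is standard spectral theory of compact self-adjoint operators.
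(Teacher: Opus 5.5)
Your proposal is correct and follows essentially the same route as the paper: trace-class implies compactness, the spectral theorem applies to the self-adjoint, positive, contractive $\Lambda$, and simplicity of the eigenvalue $1$ forces $\|\Lambda_0\|<1$. You go slightly further than the paper in two places. You derive the total-variation bound directly, with the explicit constant $C(\nu)=\tfrac12\|d\nu/d\pi-\mf{1}\|_{L_2(\pi)}$, where the paper simply cites \cite[Theorem~2.1]{roberts1997}. You also sketch why the $1$-eigenspace consists only of constants, where the paper relies on the preceding discussion.
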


\begin{proof}
  By Proposition~\ref{prp:Lambda-properties}, $\Lambda$ is self-adjoint, positive semidefinite, and a contraction on $L_2(\pi)$;
  thus $\mathrm{spec}(\Lambda)\subset[0,1]$ and $\Lambda\mf{1}=\mf{1}$.
  Since $\Lambda$ is trace-class, it is necessarily compact; consequently, its spectrum consists only of eigenvalues with finite
  multiplicity, having $0$ as the sole possible accumulation point. Because $1$ is a simple eigenvalue (as established above), all remaining eigenvalues
  belong to $[0,1)$, which yields $\|\Lambda_0\|<1$ and hence $\lambda^\star>0$.
  Geometric ergodicity in $L_2(\pi)$ and the stated total variation bound follow from standard spectral methods; see, for instance, \cite[Theorem~2.1]{roberts1997}.
\end{proof}

Our next goal is to relate trace-classness to an integrability condition on the transition density $k$; to this end, we introduce the standardized kernel
\[
  \hat{k}(V,\tilde V):=\frac{k(V,\tilde V)}{\pi(\tilde V\mid Y)}.
\]
This allows us to express the action of $\Lambda$ as an integral with respect to the measure $\pi(d\tilde V) = \pi(\tilde V|Y) d\tilde V$:
\[
  (\Lambda f)(V) = \int f(\tilde{V}) k(V, \tilde{V}) d\tilde{V} = \int f(\tilde{V}) \hat{k}(V, \tilde{V}) \pi(d\tilde{V}).
\]
By \cite[Theorem~2]{qin2019}, $\Lambda$ is trace-class on $L_2(\pi)$ if and only if
\[
  \int_{\RR^n_+} \hat{k}(V,V) \pi(dV) < \infty,
\]
which reduces to the following condition on the transition density:
\begin{equation}\label{eq:tc-diagonal}
  \int_{\RR^n_+} k(V,V)\,dV < \infty .
\end{equation}
By Proposition~\ref{prop:tc-implies-geo}, establishing the $L_2(\pi)$ geometric ergodicity of the $V$-marginal Gibbs chain reduces to verifying the integrability condition~\eqref{eq:tc-diagonal}. To show that this verification can be performed in either the centered or non-centered formulation, the next proposition establishes the equivalence of their transition kernels.
\begin{proposition}[Equivalence of transition kernels]\label{prop:kernel-equivalence}
  The transition kernels induced by the centered and non-centered parameterizations coincide; that is, for all $V \in \RR^n_+$ and $Y \in \RR^m$,
  \begin{equation*}
    \int_{\RR^n} \pi(\tilde V \mid W, Y) \pi(W \mid V, Y) \, dW = \int_{\RR^n} \pi(\tilde V \mid M, Y) \pi(M \mid V, Y) \, dM.
  \end{equation*}
\end{proposition}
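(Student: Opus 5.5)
The plan is a change of variables in the integral over the latent field. With $\zeta$ fixed, consider the affine bijection $\phi:\RR^n\to\RR^n$, $\phi(W)=KW+\mu h$, with inverse $\phi^{-1}(M)=K^{-1}(M-\mu h)$ and constant Jacobian $|\det K|$. This is well defined because $K$ is square and invertible. I will verify two facts and then combine them.

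\textbf{Step 1 (density of the latent field).} I will show that
\[
\pi_M(M\mid V,Y)=|\det K|^{-1}\,\pi_W\bigl(\phi^{-1}(M)\mid V,Y\bigr).
\]
This is the standard transformation rule for densities. It holds because the joint law of $(W,V,Y)$ under the centered model \eqref{eq:centered-combined} is carried by $\phi$ to the joint law of $(M,V,Y)$ under \eqref{eq:non-centered-combined}. To confirm this I will check the two ingredients separately.
\begin{itemize}
\item From \eqref{latent}, $KW+\mu h\mid V\sim N(\mu V,\mathrm{diag}(\sigma^2V))$, which is \eqref{eq:latent-nc}.
\item $Y\mid W,V$ has mean $X\beta+AW=X\beta+AK^{-1}(M-\mu h)$, which matches the non-centered data model.
\end{itemize}
Conditioning on $(V,Y)$ preserves the pushforward relation. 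As an alternative check, one could verify directly that the Gaussian in \eqref{eq:non-centered-conditional} is the image under $\phi$ of the Gaussian in \eqref{eq:centered-conditional}. I would not rely on that algebra.

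\textbf{Step 2 (conditional of the mixing variables).} I will show that $\pi(\tilde V\mid W,Y)=\pi(\tilde V\mid M=\phi(W),Y)$. This is immediate from Proposition~\ref{prop:conditional-distributions}: the $i$-th component of $\phi(W)$ is $(KW)_i+\mu h_i$, so the GIG parameters in \eqref{eq:V_given_W} and \eqref{eq:V_given_M} coincide once $M=\phi(W)$. More abstractly, the same conclusion follows because $\sigma(W)=\sigma(M)$ under a bijection.

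\textbf{Step 3 (combine).} Starting from the right-hand side of the claimed identity, I will substitute $M=\phi(W)$, so that $dM=|\det K|\,dW$. Using Steps 1 and 2,
\[
\int \pi(\tilde V\mid M,Y)\pi(M\mid V,Y)\,dM=\int \pi(\tilde V\mid W,Y)\,|\det K|^{-1}\pi(W\mid V,Y)\,|\det K|\,dW,
\]
and the Jacobian factors cancel, giving the left-hand side. This holds for every $V\in\RR^n_+$ and every $\tilde V\in\RR^n_+$. Both integrands are nonnegative, so Tonelli's theorem justifies the substitution without any integrability concerns.

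The only step needing care is Step 1, and specifically the justification that the conditional densities transform pointwise rather than only almost everywhere. Since all densities involved are explicit and continuous Gaussians, I expect to settle this by noting that both sides are continuous in $M$ and agree almost everywhere, hence agree everywhere.
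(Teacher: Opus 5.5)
Your proposal is correct and follows essentially the same route as the paper: the affine bijection $M=KW+\mu h$ with Jacobian $|\det K|$, the identity $\pi(\tilde V\mid W,Y)=\pi(\tilde V\mid M,Y)$ at $M=KW+\mu h$, and cancellation of the Jacobian factors under the substitution. The differences are cosmetic. You read off Step~2 from the explicit GIG parameters in Proposition~\ref{prop:conditional-distributions}, whereas the paper argues that $Y$ and $V$ are conditionally independent given $W$ and that $W$ enters only through $KW+\mu h$. Your Step~1 also spells out why the law of $T(W)$ given $(V,Y)$ is the non-centered $\pi(M\mid V,Y)$, which the paper uses tacitly.
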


\begin{proof}
  Fix any $V\in\RR^n_+$ and $Y\in\RR^m$, and assume that $K(\zeta)$ is invertible.
  Consider the bijection
  \[
    T:\RR^n\to\RR^n,\qquad M=T(W):=KW+\mu h,
  \]
  and let $|K|:=|\det K|$ denote the absolute value of the Jacobian determinant.
  In the centered parameterization, the full conditional distribution of $\tilde V$ given $(W,Y)$ satisfies
  \[
    \pi(\tilde V\mid W,Y)\ \propto\ \pi(\tilde V)\,\pi(W\mid \tilde V)\,\pi(Y\mid W).
  \]
  Since $\pi(Y\mid W,V)$ does not depend on $V$, we have $\pi(Y\mid W,V) = \pi(Y\mid W)$; consequently, $Y$ and $V$ are conditionally independent given $W$, which yields
    $\pi(\tilde V\mid W,Y)=\pi(\tilde V\mid W)$.
  Moreover, the dependence of $\pi(W\mid \tilde V)$ on $W$ enters only through the latent relation involving $KW+\mu h$; hence $\pi(\tilde V\mid W)$ depends on $W$ only through $M=KW+\mu h$. Therefore, since both $\pi(\tilde V| W,V)$ and $\pi(\tilde V| M,V)$ are probability densities, we have that
  \[
    \pi(\tilde V\mid W,Y)=\pi(\tilde V\mid M,Y)\qquad \text{whenever } M=KW+\mu h.
  \]

  Let $\pi_W(\cdot\mid V,Y)$ denote the conditional density of $W$ given $(V,Y)$, and let $\pi_M(\cdot\mid V,Y)$ denote the conditional density of $M=T(W)$ given $(V,Y)$.
  By the change-of-variables formula,
  \[
    \pi_M(M\mid V,Y) = \pi_W\!\bigl(T^{-1}(M)\mid V,Y\bigr)\,|K|^{-1},
    \qquad dM = |K|\,dW.
  \]
  Applying this transformation to the kernel integral, we obtain
  \begin{align*}
    \int_{\RR^n} \pi(\tilde V\mid W,Y)\,\pi_W(W\mid V,Y)\,dW
     & =\int_{\RR^n} \pi(\tilde V\mid T(W),Y)\,\pi_W(W\mid V,Y)\,dW \\
     & =\int_{\RR^n} \pi(\tilde V\mid M,Y)\,\pi_M(M\mid V,Y)\,dM    \\
     & =\int_{\RR^n} \pi(\tilde V\mid M,Y)\,\pi(M\mid V,Y)\,dM,
  \end{align*}
  which establishes the claimed equality.
\end{proof}

\subsection{Main results on trace-class and geometric ergodicity}\label{sec:trace-class}

This section presents our main results concerning (i) the trace-class property of the $V$-marginal Markov operator $\Lambda$ defined in \eqref{eq:Lambda-operator}, and (ii) the geometric ergodicity of the Markov chain $(V^{(t)})$ induced by the Gibbs sampler. The full proofs are deferred to Section~\ref{chp:proof}.

\begin{theorem}[Trace-class property]\label{thm:trace-class}
  Consider the LLnGM with Gibbs sampler under either the centered or the non-centered parameterization.
  The Markov operator $\Lambda$ defined in \eqref{eq:Lambda-operator} is trace-class on $L_2(\pi)$ if either of the following conditions holds:
  \begin{enumerate}
    \item $a>0$, $b>0$, and $p\in\mathbb{R}$;
    \item $a>0$, $b=0$, and $p>\tfrac{1}{2}$.
  \end{enumerate}
\end{theorem}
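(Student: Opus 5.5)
By Proposition~\ref{prop:kernel-equivalence} we may work in the non-centered parameterization. By \cite[Theorem~2]{qin2019} and \eqref{eq:tc-diagonal}, it suffices to show
\[
  \int_{\RR^n_+}\int_{\RR^n}\prod_{i=1}^n f_{\mathrm{GIG}}\!\Big(V_i;\,p-\tfrac12,\,a',\,b+\tfrac{M_i^2}{\sigma^2}\Big)\,\phi_Q(M;m_V)\,dM\,dV<\infty,
  \qquad a':=a+\tfrac{\mu^2}{\sigma^2},
\]
where $\phi_Q(\cdot;m_V)$ is the Gaussian density of \eqref{eq:non-centered-conditional}. Here $Q=\sigma_\epsilon^{-2}B^\top B+\sigma^{-2}D_V^{-1}$ and $m_V=Q^{-1}\big(\sigma_\epsilon^{-2}B^\top(Y-X\beta+\mu Bh)+\mu\sigma^{-2}\mathbf 1\big)$. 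The plan is to bound the inner $M$-integral by an explicit function $g(V)$ that factorizes, or nearly factorizes, over coordinates. Then only one-dimensional integrability near $V_i\to0$ and $V_i\to\infty$ remains to be checked.

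\textbf{Step 1 (Gaussian factor).} I first control $\phi_Q$ with two operator inequalities: $Q\succeq\sigma^{-2}D_V^{-1}$, and $Q\preceq \sigma_\epsilon^{-2}\lambda_{\max}(B^\top B)I+\sigma^{-2}D_V^{-1}$.

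\emph{Determinant.} Hadamard's inequality gives $\det Q^{1/2}\le\prod_i(c+\sigma^{-2}V_i^{-1})^{1/2}$. This is at most $C\prod_i V_i^{-1/2}$ for small $V_i$ and bounded for large $V_i$.

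\emph{Mean.} From $Q^{-1}\preceq\sigma^2D_V$ I get $\|D_V^{-1/2}m_V\|\le C(1+\|D_V^{1/2}\mathbf 1\|)$. So $m_V$ lives at scale at most $\sqrt{V_i}(1+\|V\|^{1/2})$ in coordinate $i$.

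\emph{Quadratic form.} I keep the bound
\[
\exp\!\Big(-\tfrac12(M-m_V)^\top Q(M-m_V)\Big)\le\exp\!\Big(-\tfrac{1}{2\sigma^2}\sum_i (M_i-m_{V,i})^2/V_i\Big).
\]

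\textbf{Step 2 (GIG factor and the $M$-integral).} Write
\[
f_{\mathrm{GIG}}(V_i)=c_i(M_i)\,V_i^{p-3/2}\exp\!\Big(-\tfrac12\big(a'V_i+(b+M_i^2/\sigma^2)/V_i\big)\Big),
\qquad c_i(M_i)=\frac{(a'/b_i)^{(p-1/2)/2}}{2K_{p-1/2}(\sqrt{a'b_i})},\quad b_i=b+\frac{M_i^2}{\sigma^2}.
\]
Bessel asymptotics give $c_i\asymp b_i^{(p-1)/2}e^{\sqrt{a'b_i}}$ as $b_i\to\infty$. As $b_i\to0$ (possible only when $b=0$), $c_i\asymp b_i^{\,p-1/2}$ if $p>\tfrac12$, using $K_\nu(z)\sim \Gamma(\nu)2^{\nu-1}z^{-\nu}$.

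Both Gaussian factors in $M_i$ have variance of order $V_i$. Completing the square in $M_i$ and using $e^{\sqrt{a'}|M_i|/\sigma}\le \exp\!\big(\tfrac{a'(1-\delta)V_i}{2}+C_\delta\tfrac{M_i^2}{V_i}\cdot\delta'\big)$-type inequalities, I expect the $M$-integral to produce
\[
g(V)\lesssim \prod_i V_i^{p-3/2}\,V_i^{\kappa}\,e^{-b/(2V_i)}\,e^{-\eta V_i}\,e^{C\|V\|}
\]
for some explicit exponent $\kappa$.

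\textbf{Step 3 (integrability in $V$).} In case~1 ($b>0$), the factor $e^{-b/(2V_i)}$ kills any power of $V_i$ as $V_i\to0$, so only the tail matters. In case~2 ($b=0$, $a'=a+\mu^2/\sigma^2>0$), the small-$V_i$ behaviour combines $V_i^{p-3/2}$, the determinant factor $V_i^{-1/2}$, $c_i\asymp|M_i|^{2p-1}$, and the $M_i$-integration over a window of width $\sqrt{V_i}$. This yields roughly $V_i^{p-3/2-1/2+(2p-1)/2+1/2}=V_i^{2p-2}$. I therefore expect exactly the threshold $p>\tfrac12$ to make this locally integrable, and this is what I would verify carefully.

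\textbf{Main obstacle.} The step I expect to be hardest is the large-$V$ tail. There the normalizer growth $e^{\sqrt{a'}|M_i|/\sigma}$ together with the Gaussian mean $m_V$ (of size up to $\mu V_i$) can nearly cancel the $e^{-a'V_i/2}$ decay. The reason is that $\sup_M\big(\sqrt{a'}|M|/\sigma-M^2/(2\sigma^2V)\big)=a'V/2$, which on its own cancels $e^{-a'V/2}$ exactly.

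The first thing I would try is to retain the additional curvature $\sigma_\epsilon^{-2}B^\top B$ in $Q$ rather than discarding it. Then $m_V$ stays bounded in directions outside $\Null(B)$, and only the $\mu\sigma^{-2}\mathbf 1$ drift contributes linear growth. I would also split $a'=a+\mu^2/\sigma^2$ so that the $a>0$ part supplies a strictly positive leftover rate $e^{-aV_i/2}$, with the $\mu^2/\sigma^2$ part absorbing the drift term. If this exact bookkeeping fails, the fallback is to bound $k(V,V)\le\sup_{\tilde V}$-free envelopes coordinate by coordinate over $\{V_i\le R\}$ and $\{V_i>R\}$ separately.
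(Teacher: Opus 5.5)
Your skeleton (non-centered form, the diagonal criterion \eqref{eq:tc-diagonal}, Hadamard for $|Q|^{1/2}$, $Q\succeq\sigma^{-2}D_V^{-1}$, then completing the square in $M$ against the normalizer growth $e^{\sqrt{a'}|M_i|/\sigma}$) is viable and close to the paper's. However, the decisive step is left open: you never show that $k(V,V)$ decays exponentially in $S:=\sum_i V_i$. Your Step~2 envelope $e^{-\eta V_i}e^{C\|V\|}$ is not integrable unless $\eta$ beats $C$. As you observe yourself, the supremum over $M$ cancels $e^{-a'V_i/2}$ exactly, so this cannot be decided with generic constants.

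The missing ingredient is the \emph{sharp} constant in your mean bound. From $D_V^{-1}\preceq\sigma^2Q$ you get $Q^{-1}D_V^{-1}Q^{-1}\preceq\sigma^4D_V$, hence $\|D_V^{-1/2}\mu\sigma^{-2}Q^{-1}\mathbf 1\|_2\le|\mu|\sqrt S$. The data part $\sigma_\epsilon^{-2}D_V^{-1/2}Q^{-1}B^\top(Y-X\beta+\mu Bh)$ stays bounded because $BQ^{-1}B^\top\preceq\sigma_\epsilon^2I$. Completing the square coordinatewise then leaves the exponent $-\frac14\sum_i(\sqrt{a'}V_i-|w_i|)^2/V_i$, where $w:=m_V/\sigma$. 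The triangle inequality in the $D_V^{-1}$-weighted $\ell_2$ norm bounds this by $-\frac14\bigl((\sqrt{a'}-|\mu|/\sigma)\sqrt S-C\bigr)_+^2$. This is genuine linear decay in $S$ precisely because $\sqrt{a+\mu^2/\sigma^2}>|\mu|/\sigma$ if and only if $a>0$. The paper does the equivalent bookkeeping through the exponent gap $\Delta_\phi(V)$ and the inequality $|\phi^\top Q^{-1}\mathbf 1|\le\sum_iV_i$, obtaining the rate $\frac{3}{16}(\sqrt{\tilde a}-|\mu|)^2$ (with $\sigma=\sigma_\epsilon=1$).

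The two remedies you sketch would not supply this. Splitting $a'=a+\mu^2/\sigma^2$ so that the $\mu^2/\sigma^2$ part ``absorbs the drift'' and $a$ leaves $e^{-aV_i/2}$ is false. The normalizer grows like $e^{\sqrt{a'}|M_i|/\sigma}$, and $\sqrt{a'}$ does not split additively. Already for $B=0$, where $m_V=\mu V$, the diagonal kernel decays exactly like $\exp\{-\frac14(\sqrt{a'}-|\mu|/\sigma)^2V_i\}$ up to polynomial factors. That rate is $\approx a^2\sigma^2/(16\mu^2)\ll a/2$ for small $a$.

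Retaining $\sigma_\epsilon^{-2}B^\top B$ is unnecessary, and it gives nothing when $B=0$. More generally it gives nothing along $V=v\mathbf 1$ whenever $\mathbf 1\in\Null(B)$, since then $Q^{-1}\mathbf 1=\sigma^2V$ grows at the full rate. The statement carries no null-space hypothesis, so the argument must handle the whole drift vector $\mu\sigma^{-2}\mathbf 1$ with the sharp comparison above.

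Separately, your small-$V$ heuristic rests on wrong asymptotics. For $\nu=p-\frac12>0$ and $b_i\to0$, the factors $(a'/b_i)^{\nu/2}$ and $1/K_\nu(\sqrt{a'b_i})\sim(a'b_i)^{\nu/2}/(\Gamma(\nu)2^{\nu-1})$ cancel. So $c_i$ tends to a positive constant rather than behaving like $|M_i|^{2p-1}$. The diagonal therefore behaves like $V_i^{p-3/2}$ (as in the paper), not $V_i^{2p-2}$; the threshold $p>\frac12$ agrees only by coincidence. Also, $c_i\asymp b_i^{(1-p)/2}e^{\sqrt{a'b_i}}$ as $b_i\to\infty$, not $b_i^{(p-1)/2}e^{\sqrt{a'b_i}}$, though that polynomial factor is harmless.
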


For parameter regimes not covered by Theorem~\ref{thm:trace-class}, we will establish geometric ergodicity via a drift-minorization argument. To state this result, we first introduce the following assumption.


\begin{assumption}[Null-smallness assumption]
  \label{ass:null-smallness-A}
  Consider either the centered or the non-centered parameterization introduced in Section \ref{sec:model} with $K$ invertible.
  Let $r := \dim\Null(A)$. If $r > 0$, let $U_A\in\mathbb R^{n\times r}$ be an orthonormal basis of $\Null(A)$ and define
  \[
    G:=U_A^\top K^\top K\,U_A \in\mathbb R^{r\times r},\qquad
    z:=U_A^\top K^\top \mathbf 1 \in\mathbb R^{r}.
  \]
  We assume that either $r = 0$, or
  \begin{equation}\label{eq:null-smallness-A}
    \frac{|\mu|}{\sqrt{\sigma^2 a+\mu^2}}\;\sqrt{\,z^\top G^{-1}z\,}<1.
  \end{equation}
  Note that this condition is automatically satisfied when $\mu=0$.
\end{assumption}

\begin{remark}
  \label{rem:null-smallness-scalability}
  The null-smallness ratio in \eqref{eq:null-smallness-A} imposes a geometric constraint on the interaction between the drift direction $\mathbf{1}$, the prior structure matrix $K$, and the null space of the design matrix $A$.
  The term $\sqrt{z^\top G^{-1}z}$ quantifies the magnitude of the projection of the transformed vector $K^\top \mathbf{1}$ onto the subspace $\Null(A)$, under the geometry induced by the restricted Gram matrix $G=U_A^\top K^\top K\,U_A$ on $\Null(A)$.
  Specifically $z = U_A^\top (K^\top \mathbf 1)$ represents the coefficients of $K^\top \mathbf{1}$ when projected onto the basis of $\Null(A)$, and $G = U_A^\top K^\top K U_A$ represents the Gram matrix of the basis vectors transformed by $K$, capturing the curvature of the prior precision restricted to the null space.
  Therefore, any dependence on the dimension $n$ is driven by how strongly the vector $K^\top \mathbf{1}$ aligns with $\Null(A)$ relative to the conditioning of $K^\top K$ on that subspace.

  To guarantee scalability, specifically for the term $\sqrt{z^\top G^{-1}z}$ to remain bounded as ${n\to\infty}$, it suffices that the drift direction does not increasingly concentrate in the unidentifiable subspace. For instance, the following structural assumptions are sufficient for dimension-free behavior:
  \begin{enumerate}
    \item[(i)] \emph{Orthogonality of the constant mode:} $\mathbf 1 \perp \Null(A)$ (equivalently $\mathbf 1\in\Range(A^\top)$), which implies $U_A^\top\mathbf 1=0$.
    \item[(ii)] \emph{Structure preservation by $K$:} The matrix $K$ does not map the constant vector entirely into the null space. Specifically, assume $K^\top\mathbf 1 = c\,\mathbf 1 + r$, where $c$ is a scalar and the remainder $r$ has a bounded projection onto the null space: $\|U_A^\top r\|_2 = O(1)$.
    \item[(iii)] \emph{Uniform non-degeneracy:} The matrix $K$ remains well-conditioned when restricted to $\Null(A)$, i.e., $\lambda_{\min}(G) = \lambda_{\min}(U_A^\top K^\top K U_A) \ge c_0 > 0$ uniformly in $n$.
  \end{enumerate}
\end{remark}

Having established the null-smallness condition, we now state our main result on geometric ergodicity of the marginal $V$-chain under the drift-minorization framework. The proof is deferred to Section~\ref{sec:geometric-ergodicity-proof}.

\begin{proposition}[Geometric ergodicity via drift-minorization]\label{prop:geo-dm-cases}
  Consider the Gibbs sampler induced by \eqref{eq:gibbs-sampler-non-centered} on $V\in(0,\infty)^n$. Since the conditional density $\pi(M\mid V,Y)$ is everywhere positive and continuous in $V$ for the Gaussian model considered, the following results hold.
  Recall that $\tilde a:=a+\mu^2/\sigma^2$. 
  Then the marginal chain $(V^{(t)})_{t\ge0}$ is geometrically ergodic in each of the following parameter regimes:
  \begin{enumerate}
    \item[\textup{(I)}] $a=\mu=0$ (equivalently $\tilde a=0$). In this regime, necessarily $p<0$ and $b>0$,
          and $\tilde V_i\mid M_i$ is \textup{Inv-Gamma}. The conclusion holds unconditionally.

    \item[\textup{(II)}] $a=0$, $b>0$, $p<0$, and $\mu\neq0$ (so that $\tilde a=\mu^2/\sigma^2>0$),
          with Assumption~\ref{ass:null-smallness-A} satisfied. For $a=0$, Assumption~\ref{ass:null-smallness-A} reduces to
            $\sqrt{z^\top G^{-1}z}<1$.
          In this regime, $\tilde V_i\mid M_i$ is \textup{GIG} with parameters $(p-\frac12,\tilde a,b+(M_i/\sigma)^2)$.

    \item[\textup{(III)}] $a>0$, $b=0$, $p>0$ (so that $\tilde a=a+\mu^2/\sigma^2>0$),
          with Assumption~\ref{ass:null-smallness-A} satisfied.
          In this regime, $\tilde V_i\mid M_i$ is \textup{GIG} with parameters $(p-\frac12,\tilde a,(M_i/\sigma)^2)$.
  \end{enumerate}
\end{proposition}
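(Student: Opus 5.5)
We will apply the standard drift-and-minorization criterion of \cite{rosenthal1995}. It suffices to find a function $\mathcal V:(0,\infty)^n\to[1,\infty)$ and constants $\lambda<1$, $L<\infty$ with
\[
  \E[\mathcal V(V^{(t+1)})\mid V^{(t)}=V]\le\lambda\,\mathcal V(V)+L,
\]
together with a minorization on every sublevel set $\{\mathcal V\le d\}$. The minorization step is routine. The kernel factorizes as $k(V,\tilde V)=\int \pi(\tilde V\mid M)\pi(M\mid V,Y)\,dM$, and $\pi(M\mid V,Y)$ is Gaussian with mean and covariance continuous in $V$. On a sublevel set on which $V$ stays in a compact subset of $(0,\infty)^n$ (we choose $\mathcal V$ coercive at both $0$ and $\infty$), we bound $\pi(M\mid V,Y)\ge \epsilon\, g(M)$ for some density $g$ on a fixed ball. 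Integrating against $\pi(\tilde V\mid M)$ then gives $k(V,\cdot)\ge\epsilon\,\nu(\cdot)$. Geometric ergodicity follows, combined with Proposition~\ref{prp:chain-ergodicity}.

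For the drift we work in the non-centered parameterization, using Proposition~\ref{prop:kernel-equivalence}. We take
\[
  \mathcal V(V)=\sum_i\bigl(V_i^{c}+V_i^{-d}\bigr),\qquad \text{small } c,d>0,
\]
adapted to the regime: in (I) a pure inverse-moment piece, in (III) a piece controlling $V_i\to0$. We compute the drift in two stages. For the $\tilde V\mid M$ stage we use GIG moment bounds. For $\tilde V_i\sim\mathrm{GIG}(p-\tfrac12,\tilde a,b+M_i^2/\sigma^2)$ we have
\[
  \E[\tilde V_i^{-d}\mid M]\le C_1\,(b+M_i^2/\sigma^2)^{-d}+C_2 .
\]
When $\tilde a>0$ we also have $\E[\tilde V_i^{c}\mid M]\le \kappa\,|M_i/\sigma|^{c}/\tilde a^{c/2}+C$, since the GIG with large $b$ concentrates near $\sqrt{b/\tilde a}$. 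In regime (I) the inverse-gamma moments are exact. In (III), with $b=0$, the negative moments need $p-\tfrac12-d$ behaviour to be handled through $|M_i|^{-2d}$. For the $M\mid V$ stage we bound $\E[|M_i|^{c}\mid V]$ and $\E[|M_i|^{-2d}\mid V]$ using the Gaussian form \eqref{eq:non-centered-conditional}. The inverse moments are controlled by a lower bound on the conditional variance of $M_i$. By Schur complements this variance is at least $(\sigma_\epsilon^{-2}\|B\|^2+\sigma^{-2}V_i^{-1})^{-1}$, which yields $\E[|M_i|^{-2d}\mid V]\le C(1+V_i^{-d})$, with a small constant after tuning $d$.

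The main obstacle is the growth direction, i.e.\ controlling $\E[|M_i|^c\mid V]$ when components of $V$ are large. We split $\RR^n$ into $\Range(A^\top)$-related directions, which the data pins down uniformly in $V$, and $\Null(A)$ directions in the $W$-coordinates, where $M=KW+\mu h$. Along $\Null(A)$ the conditional law of $M$ is dictated by the prior $N(\mu V,\sigma^2 D_V)$ restricted to the affine set $K(w_0+U_A u)+\mu h$. The conditional mean of $M$ is essentially a $D_V^{-1}$-weighted projection of $\mu V$ onto that subspace. Its dominant part has size $|\mu|$ times the projection of $\mathbf 1$ measured in the metric $G$, giving a contribution bounded by $|\mu|\sqrt{z^\top G^{-1}z}\,\max_iV_i$ plus fluctuations of order $\sigma\sqrt{V_i}$. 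Feeding this into the GIG step produces a factor
\[
  \Bigl(\frac{|\mu|\sqrt{z^\top G^{-1}z}}{\sqrt{\sigma^2\tilde a}}\Bigr)^{c}=\Bigl(\frac{|\mu|}{\sqrt{\sigma^2a+\mu^2}}\sqrt{z^\top G^{-1}z}\Bigr)^{c}.
\]
Assumption~\ref{ass:null-smallness-A} is exactly the statement that this factor is $<1$. The $\sqrt{V_i}$ fluctuation terms are of lower order and are absorbed into $L$ via Young's inequality. To make the contraction work with a max rather than a sum, we would try $\mathcal V$ built from $\max_iV_i^{c}$, or a weighted sum, and verify the inequality with $\lambda$ close to that ratio.

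In regime (I) there is no $\tilde a$ and no upper concentration. There we use only the inverse-gamma structure: $\E[\tilde V_i\mid M]\propto b+M_i^2/\sigma^2$ with $\mu=0$, so $M_i$ has conditional second moment at most $\sigma^2V_i$ plus bounded terms. This gives the drift $\E[\tilde V_i]\le \sigma^2V_i/(\sigma^2(-p+\tfrac12-1))+C$ when $-p>\tfrac12$. For the remaining $p$ we use a fractional power $V_i^{c}$, which gives the contraction factor $\Gamma(-p+\tfrac12-c)/\Gamma(-p+\tfrac12)\cdot\E|N|^{2c}<1$ for small $c$. This holds unconditionally, since the null-space drift term vanishes when $\mu=0$.
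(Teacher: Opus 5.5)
Your architecture matches the paper: Rosenthal drift--minorization on the non-centered chain, GIG moment bounds for $\tilde V\mid M$, inverse Gaussian moments via $(\bar Q^{-1})_{ii}\ge 1/\bar Q_{ii}$, and $P_{\Null(B)}\mathbf 1$ producing the constant in Assumption~\ref{ass:null-smallness-A}. \textbf{The gap is that the drift inequality in regimes (II)--(III) is never closed.} Write $\gamma_{\mathrm{ns}}:=\frac{|\mu|}{\sqrt{\sigma^2a+\mu^2}}\sqrt{z^\top G^{-1}z}$. You bound each coordinate of the normalized null-space mean by $\gamma_{\mathrm{ns}}\max_j V_j$ and pass it through $V_i^c$ with small $c$. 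Summing over $i$ gives only $\sum_i\E[\tilde V_i^c\mid V]\lesssim n\,\gamma_{\mathrm{ns}}^c\max_jV_j^c$. The coefficient in front of $\sum_jV_j^c$ is therefore of order $n\gamma_{\mathrm{ns}}^c$, or at best $n^{1-c}\gamma_{\mathrm{ns}}^c$ by concavity of $t\mapsto t^c$, so $\gamma_{\mathrm{ns}}<1$ does not give $\lambda<1$. A $\max_iV_i^c$ Lyapunov function would require control of $\E[\max_i\tilde V_i^c\mid V]$ rather than $\max_i\E[\tilde V_i^c\mid V]$, and you leave this open.

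The paper closes it with the linear term $\sum_iV_i$, which is admissible since $\tilde a>0$. Lemma~\ref{lem:EV-linear-allp} gives $\E[\tilde V_i\mid M_i]\le |M_i|/(\sigma\sqrt{\tilde a})+C$, so only $\E[\|M\|_1\mid V]$ matters. Lemma~\ref{lem:l1-duality-nosqrtn} then gives
\[
  \|\bar Q(V)^{-1}\bar m_0\|_1\le\Bigl(\sum_i V_i\Bigr)^{1/2}\bigl(\bar m_0^\top D_V\bar m_0\bigr)^{1/2}\le\|\bar m_0\|_2\sum_i V_i ,
\]
which is exactly the factor $\gamma_{\mathrm{ns}}$ in front of $\sum_iV_i$. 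The range part is $O\bigl((\sum_iV_i)^{1/2}\bigr)$ and the Gaussian fluctuation is $O(\sum_i\sqrt{V_i})$; both are absorbed by Young's inequality.

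The small-$V$ step in (III) is also incomplete. For $0<p<\tfrac12$ the coefficient of $V_i^{-d}$ is not ``small after tuning $d$''. With the sharp small-argument Bessel asymptotics, $C_1\approx 2^{d}\Gamma(\tfrac12-p+d)/\Gamma(\tfrac12-p)$, and $\E|N(0,1)|^{-2d}=2^{-d}\Gamma(\tfrac12-d)/\sqrt\pi$. The contraction factor is then $\Gamma(\tfrac12-p+d)\Gamma(\tfrac12-d)/\bigl(\Gamma(\tfrac12-p)\Gamma(\tfrac12)\bigr)$. This equals $1$ at $d=0$ and at $d=p$, and is $<1$ in between only by log-convexity of $\Gamma$. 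You therefore need the sharp $C_1$ (up to $1+\epsilon^*$), the restriction $d<p$, and this convexity step (Lemma~\ref{lem:E1V-delta-app}; $p=\tfrac12$ needs the separate $K_0$ asymptotics).

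In (I) you dropped the $\tfrac12$ in the inverse-gamma scale $\tfrac b2+\tfrac{M_i^2}{2\sigma^2}$. The linear drift coefficient is $1/(-2p-1)$, which is contractive only for $p<-1$. The correct fractional factor is $\frac{\Gamma(\frac12-p-c)}{\Gamma(\frac12-p)}\cdot\frac{\Gamma(c+\frac12)}{\sqrt\pi}$, i.e.\ yours divided by $2^c$. This one is $<1$ for all $c\in(0,-p)$ (Lemma~\ref{lem:baseline-contraction}); yours exceeds $1$ for small $c$ when $|p|$ is small.

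Finally, (I) hinges on $\bar Q(V)^{-1}\bar m$ being bounded uniformly in $V$ for $\bar m\in\Range(B^\top)$, which you only assert. An $O(\sqrt{V_i})$ bound would add a non-small constant to the coefficient of $V_i^c$ and destroy contraction. This needs a genuine argument, as in Lemma~\ref{lem:eta-range-bound}.
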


The preceding results, Theorem~\ref{thm:trace-class} and Proposition~\ref{prop:geo-dm-cases}, together characterize geometric ergodicity over complementary regions of the parameter space.
We now synthesize these findings into a single comprehensive statement, which provides a complete map of geometric ergodicity across all parameter regimes (Table~\ref{tab:geom-ergodicity}).

\begin{theorem}[Geometric ergodicity over the full parameter space]\label{thm:geo-all-cases}
  Consider the Gibbs sampler induced by \eqref{eq:gibbs-sampler-non-centered} on $V\in(0,\infty)^n$ and assume that $\pi(M\mid V,Y)$ is everywhere positive and continuous in $V$.
  Then the marginal chain $(V^{(t)})_{t\ge0}$ is geometrically ergodic in each of the parameter regimes listed in Table~\ref{tab:geom-ergodicity}.
\end{theorem}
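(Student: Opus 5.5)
This is a synthesis result, so the plan is a case partition of the GIG parameter space $\Psi=\Psi_I\cup\Psi_{IG}\cup\Psi_\Gamma$, followed by a check that every cell of Table~\ref{tab:geom-ergodicity} is covered by Theorem~\ref{thm:trace-class} or Proposition~\ref{prop:geo-dm-cases}. Throughout, Proposition~\ref{prop:kernel-equivalence} lets us work with the non-centered chain without loss of generality. First, Proposition~\ref{prp:chain-ergodicity} supplies the standing hypotheses: positivity of $\pi(M\mid V,Y)$ together with the GIG full conditionals \eqref{eq:V_given_M} gives $k(V,\tilde V)>0$. Hence the chain is $\psi$-irreducible, aperiodic and Harris recurrent, and the eigenvalue $1$ of $\Lambda$ is simple.

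\emph{Interior and large-shape gamma cases.} For $(p,a,b)\in\Psi_I$, i.e.\ $a>0$, $b>0$, $p\in\mathbb R$, Theorem~\ref{thm:trace-class}(1) shows that $\Lambda$ is trace-class. Proposition~\ref{prop:tc-implies-geo} then gives a positive $L_2(\pi)$ spectral gap and thus geometric ergodicity. The same argument covers $a>0$, $b=0$, $p>1/2$ through Theorem~\ref{thm:trace-class}(2). No null-smallness condition is needed in these rows.

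\emph{Boundary and heavy-tail cases.} For $\Psi_{IG}$ we have $a=0$, $b>0$, $p<0$. If $\mu=0$, then $\tilde a=0$ and Proposition~\ref{prop:geo-dm-cases}(I) applies unconditionally. If $\mu\neq0$, Proposition~\ref{prop:geo-dm-cases}(II) applies under Assumption~\ref{ass:null-smallness-A}, which for $a=0$ reads $\sqrt{z^\top G^{-1}z}<1$. For $\Psi_\Gamma$ with $0<p\le 1/2$, Proposition~\ref{prop:geo-dm-cases}(III) applies under Assumption~\ref{ass:null-smallness-A}. It is automatic when $\mu=0$ or $r=0$. Regime (III) also covers $p>1/2$, so the gamma case is doubly covered there; the table can record the assumption-free trace-class route for that part.

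Drift--minorization yields geometric ergodicity in total variation, from every starting point, with a $V$-dependent constant. Since the chain is reversible (Proposition~\ref{prp:Lambda-properties}), \cite[Theorem~2.1]{roberts1997} makes this equivalent to the $L_2(\pi)$ spectral gap. The two notions therefore agree across all rows, and the table can state a single conclusion.

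The main obstacle is bookkeeping rather than analysis: the three sets $\Psi_I$, $\Psi_{IG}$, $\Psi_\Gamma$ must be exhausted exactly, with each row's hypotheses (including which rows need Assumption~\ref{ass:null-smallness-A}) matching the cited results. The delicate edge is the gamma regime $0<p\le 1/2$, which relies solely on Proposition~\ref{prop:geo-dm-cases}(III). I would state that dependence explicitly in the table, since it is where the assumption genuinely restricts the conclusion.
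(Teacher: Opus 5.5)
Your proposal is correct and matches the paper's proof. Both use the same case split: Theorem~\ref{thm:trace-class} together with Proposition~\ref{prop:tc-implies-geo} handles $a>0,b>0$ and $a>0,b=0,p>\tfrac12$, and Proposition~\ref{prop:geo-dm-cases}(I)--(III) handles the remaining rows, with Assumption~\ref{ass:null-smallness-A} where required. Two of your remarks are not written out in the paper's proof: positivity of $k$ via Proposition~\ref{prp:chain-ergodicity}, and the reconciliation of total-variation and $L_2(\pi)$ geometric ergodicity through reversibility and \cite[Theorem~2.1]{roberts1997}. Both are consistent with Section~\ref{viewpoint} and simply make the unified conclusion more explicit.
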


\begin{proof}
  The proof proceeds by case analysis corresponding to the regimes in Table~\ref{tab:geom-ergodicity}.
  
  For the cases $a>0, b>0, p\in\RR$ and $a>0, b=0, p>\tfrac12$, Theorem~\ref{thm:trace-class} establishes that the operator $\Lambda$ is trace-class, and geometric ergodicity then follows from Proposition~\ref{prop:tc-implies-geo}.

  For the remaining regimes, we appeal to the drift-minorization results of Proposition~\ref{prop:geo-dm-cases}.
  Specifically: case (I) covers $a=0, b>0, p<0, \mu=0$; case (II) covers $a=0, b>0, p<0, \mu\neq0$ under Assumption~\ref{ass:null-smallness-A}; and case (III) covers $a>0, b=0, p\le \tfrac12$ under Assumption~\ref{ass:null-smallness-A}.
\end{proof}

\begin{table}[t]
  \centering
  \caption{Geometric ergodicity and trace class property of the Gibbs sampler across parameter regimes}
  \label{tab:geom-ergodicity}
  \begin{tabular}[t]{lll}
    \toprule
    Parameter regime & Trace-class  & Geometric ergodicity                                     \\
    \midrule
    $a>0, b>0, p \in \RR$            & Yes        & Yes                                                      \\
    $a>0, b=0, p>1/2$                & Yes        & Yes                                                      \\
    $a>0, b=0, 0 < p\le 1/2$         & Unknown    & Yes$^*$ (requires Assumption~\ref{ass:null-smallness-A}) \\
    $a=0, b>0, p<0, \mu = 0$         & No         & Yes                                                      \\
    $a=0, b>0, p<0, \mu \neq 0$      & Unknown    & Yes$^*$ (requires Assumption~\ref{ass:null-smallness-A}) \\
    \bottomrule
  \end{tabular}
\end{table}

An important special case is the NIG model, which we state as a corollary.

\begin{corollary}[Geometric ergodicity of the NIG field]
  \label{cor:ergodic-nig}
  Suppose that the mixing variables follow an Inverse Gaussian distribution, $V_i \sim \text{IG}(a, b)$ for $a, b > 0$, corresponding to $p = -1/2$ in the GIG parameterization.
  Then the latent field $W$ follows a NIG distribution, the operator $\Lambda$ is trace-class, and the Gibbs sampler is geometrically ergodic in the $L_2(\pi)$-sense for all values of the drift parameter $\mu$.
\end{corollary}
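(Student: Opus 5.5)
The corollary is a specialization of Theorem~\ref{thm:trace-class}, so the plan is to verify that its hypotheses hold and then chain the earlier results.

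\emph{Step 1: placement in the parameter space.} With $V_i\sim\mathrm{IG}(a,b)$ and $a,b>0$, the GIG parameters are $(p,a,b)=(-1/2,a,b)$ with $a>0$ and $b>0$, exactly as in Table~\ref{tab:GH}, up to the scaling convention relating the IG parameters to the GIG $(a,b)$. This lies in the interior $\Psi_I$. The scaling property $cV\sim\mathrm{GIG}(p,a/c,cb)$ shows that any reparametrization of the IG law keeps both parameters strictly positive, so the interior classification does not depend on the convention used.

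\emph{Step 2: the NIG marginal.} I would invoke the variance-mixture representation \eqref{eq:mixture}. In the non-centered form \eqref{eq:latent-nc}, each component satisfies $M_i = \mu V_i + \sigma\sqrt{V_i}\,U_i$ with $U_i\sim N(0,1)$ independent of $V_i$. With an inverse Gaussian mixing law, this is by definition normal-inverse Gaussian. Hence $KW+\mu h$ has i.i.d.\ NIG components, and $W=K^{-1}(M-\mu h)$ is the corresponding NIG field. This is the sense in which ``$W$ follows a NIG distribution'' should be read.

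\emph{Step 3: trace-class and geometric ergodicity.} Case~1 of Theorem~\ref{thm:trace-class} ($a>0$, $b>0$, $p\in\mathbb{R}$) applies with $p=-1/2$. By Proposition~\ref{prop:kernel-equivalence} this holds in either parameterization, so $\Lambda$ is trace-class on $L_2(\pi)$. The hypotheses of that theorem place no restriction on $\mu$, and the null-smallness Assumption~\ref{ass:null-smallness-A} is needed only in the drift–minorization regimes. Therefore the conclusion holds for every $\mu\in\mathbb{R}$.

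Proposition~\ref{prop:tc-implies-geo} then gives a positive $L_2(\pi)$ spectral gap and the total variation bound for every initial distribution in $\mathcal{L}_2(\pi)$. That proposition relies on $1$ being a simple eigenvalue, which comes from Proposition~\ref{prp:chain-ergodicity}. So I must check that $k(V,\tilde V)>0$ everywhere. This holds because $\pi(\tilde V\mid M,Y)$ is a product of GIG densities from \eqref{eq:V_given_M} that are strictly positive on $\mathbb{R}^n_+$, and $\pi(M\mid V,Y)$ is a nondegenerate Gaussian.

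\emph{Main obstacle.} The only step that is not pure bookkeeping is the positivity of the kernel. The updated GIG parameters must stay valid: $\tilde a=a+\mu^2/\sigma^2>0$ and $b+M_i^2/\sigma^2>0$. Both hold because $a,b>0$, for every $\mu$, which is precisely why no condition on the drift $\mu$ is needed in this case.
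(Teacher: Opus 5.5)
Your proposal is correct and follows the paper's proof. The paper notes that $\mathrm{IG}(a,b)$ corresponds to $p=-1/2$ with $a>0$, $b>0$, and then applies Case~1 of Theorem~\ref{thm:trace-class} together with Proposition~\ref{prop:tc-implies-geo}; your extra checks (the NIG marginal via the mixture representation, and strict positivity of $k$ so that Proposition~\ref{prp:chain-ergodicity} applies) are details the paper leaves implicit, and they are sound.
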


\begin{proof}
  Since $V_i \sim \text{IG}(a, b)$ implies $a > 0$, $b > 0$, and $p = -1/2$, the result follows immediately from Theorem~\ref{thm:trace-class} (case 1) and Proposition~\ref{prop:tc-implies-geo}.
\end{proof}

\section{Stochastic gradient estimation}\label{chp:estimation}

This section explains how the Gibbs sampler developed in Section~\ref{sec:gibbs} can be used to construct stochastic gradient estimators for maximum-likelihood inference of LLnGMs.

\subsection{Likelihood gradient via Fisher's identity}\label{sec:fisher}

Let $\theta \in \Theta$ denote the vector of all model parameters and let $p_\theta(Y, Z)$ be the joint density of the observed data $Y$ and the latent variables $Z$. As discussed in Section~\ref{sec:gibbs}, we may use either the \emph{centered} parametrization with $Z=(W,V)$ or the \emph{non-centered} parametrization with $Z=(M,V)$. The marginal likelihood of the observed data is 
\[
  p_\theta(Y) = \int p_\theta(Y,Z) \, \lambda(dZ),
\]
where $\lambda$ is a dominating measure on the latent space. The corresponding log-likelihood is $\ell(\theta; Y) := \log p_\theta(Y)$.

Direct maximization of $\ell(\theta; Y)$ is challenging because the integral defining $p_\theta(Y)$ is generally intractable, precluding a closed-form expression for the log-likelihood. To perform maximum likelihood estimation using gradient-based optimization, we instead seek to compute the gradient $\nabla_\theta \ell(\theta; Y)$. While the log-likelihood itself is intractable, its gradient can be expressed as a posterior expectation via Fisher's identity.

\begin{proposition}[Fisher's identity {\cite[Appendix D.2]{douc2013nonlinear}}]\label{prp:fisher}
  Assume that $p_\theta(Y,Z)$ is differentiable with respect to $\theta$ and that the order of differentiation and integration can be interchanged (a standard dominated-convergence condition). Then,
  \[
    \nabla_\theta \ell(\theta;Y)
    = \int \nabla_\theta \log p_\theta(Y,Z) \, p_\theta(Z \mid Y) \, \lambda(dZ)
    = \mathbb{E}_\theta \bigl[ \nabla_\theta \log p_\theta(Y,Z) \mid Y \bigr].
  \]
\end{proposition}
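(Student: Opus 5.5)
The plan is the standard score-function argument, using only the two hypotheses of the statement: differentiability of $\theta\mapsto p_\theta(Y,Z)$ and the permission to interchange $\nabla_\theta$ with $\int\cdot\,\lambda(dZ)$.

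\textbf{Step 1 (differentiate the marginal).} Start from $p_\theta(Y)=\int p_\theta(Y,Z)\,\lambda(dZ)$. Apply the assumed interchange of differentiation and integration to get
\[
  \nabla_\theta p_\theta(Y)=\int \nabla_\theta p_\theta(Y,Z)\,\lambda(dZ).
\]

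\textbf{Step 2 (log-derivative trick).} On the set $\{Z: p_\theta(Y,Z)>0\}$, write $\nabla_\theta p_\theta(Y,Z)=p_\theta(Y,Z)\,\nabla_\theta\log p_\theta(Y,Z)$. The complementary set needs a short argument. There $p_\theta(Y,Z)=0$ at the current $\theta$ while $p_{\theta'}(Y,Z)\ge 0$ for all $\theta'$, so $\theta$ is an interior minimum of a differentiable function and its gradient vanishes. That set therefore contributes nothing to the integral. In the LLnGM the joint density is strictly positive on $\RR^n\times\RR^n_+$, so this step is in any case a formality.

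\textbf{Step 3 (divide by the marginal).} Since $p_\theta(Y)>0$, we have $\nabla_\theta\ell(\theta;Y)=\nabla_\theta p_\theta(Y)/p_\theta(Y)$. Using Steps 1 and 2 and the identity $p_\theta(Y,Z)/p_\theta(Y)=p_\theta(Z\mid Y)$,
\[
  \nabla_\theta\ell(\theta;Y)=\int \nabla_\theta\log p_\theta(Y,Z)\,\frac{p_\theta(Y,Z)}{p_\theta(Y)}\,\lambda(dZ)
  =\int \nabla_\theta\log p_\theta(Y,Z)\,p_\theta(Z\mid Y)\,\lambda(dZ),
\]
which is the conditional expectation in the statement.

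The only genuine content is the interchange in Step 1, and the statement assumes it. If I had to justify it, I would use dominated convergence via the mean value theorem. It suffices to find a function $g(Z)$, integrable with respect to $\lambda$, with $\sup_{\theta'\in N}\|\nabla_{\theta'}p_{\theta'}(Y,Z)\|\le g(Z)$ on a neighbourhood $N$ of $\theta$. Equivalently, the complete-data score must be integrable under the posterior, uniformly near $\theta$.

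I expect this domination to be the main obstacle in applications. Section~\ref{sec:L1} indicates it can fail in the centered parametrization when $p$ is small, and that the non-centered parametrization is introduced precisely to secure it.
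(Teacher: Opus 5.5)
Your proposal is correct and is the standard score-function argument: interchange $\nabla_\theta$ with $\int\cdot\,\lambda(dZ)$, write $\nabla_\theta p_\theta(Y,Z)=p_\theta(Y,Z)\,\nabla_\theta\log p_\theta(Y,Z)$, and divide by $p_\theta(Y)$. The paper gives no proof of its own and simply cites \cite[Appendix D.2]{douc2013nonlinear} for this same argument; one small correction to your side remark is that an integrable envelope $g$ with the supremum over $\theta'\in N$ taken inside the integral is sufficient but strictly stronger than posterior integrability of the score at each fixed $\theta'$, so your ``equivalently'' should read ``in particular''.
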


The complete-data log-likelihood $\log p_\theta(Y,Z)$ is available in closed form, owing to the conditionally Gaussian structure of the observation model and latent field, and the parametric mixing distribution for $V$. The expectation in Proposition~\ref{prp:fisher}, however, remains analytically intractable. We therefore approximate it using Monte Carlo integration with samples drawn from the posterior distribution.

Specifically, let $\{(V^{(j)}, W^{(j)})\}_{j=1}^k$ be samples from the joint posterior $p_\theta(V, W \mid Y)$ generated by the Gibbs sampler described in Section~\ref{sec:gibbs}. We define the complete-data score function as
\[
  g_\theta(Y, V, W) := \nabla_\theta \log p_\theta(Y, V, W).
\]
A natural Monte Carlo estimator for $\nabla_\theta \ell(\theta; Y)$ is then given by
\begin{equation}\label{eq:mc-grad}
  \widehat{\nabla \ell}_k(\theta; Y)
  = \frac{1}{k} \sum_{j=1}^k g_\theta(Y, V^{(j)}, W^{(j)}).
\end{equation}
Under the non-centered parametrization, an analogous estimator is constructed using the Gibbs samples $(V^{(j)}, M^{(j)})$ and the corresponding complete-data score $\nabla_\theta \log p_\theta(Y, V, M)$.

\subsection{Rao--Blackwellized gradient estimator and Markov chain ergodicity}\label{sec:rb}

Geometric ergodicity derived in Section~\ref{sec:geometric-ergodicity-proof} applies to the $V$-marginal of the Gibbs sampler. 
\begin{color}{black}
It is important to emphasize that throughout each application of the Gibbs sampler,
the parameter vector $\theta$ is held fixed.
That is, for a given iterate $\theta^{(t)}$ in the stochastic gradient procedure,
the Gibbs sampler targets the posterior distribution $p_{\theta^{(t)}}(V,W \mid Y)$
(or $p_{\theta^{(t)}}(V,M \mid Y)$ in the non-centered parametrization).
The parameter $\theta$ is updated only between Gibbs sampling stages via the stochastic
gradient update, and not within the Markov chain itself.
Consequently, all ergodicity and convergence results apply conditionally on a fixed
value of $\theta$, as required by the Markov chain theory invoked below.
\end{color}
This theoretical guarantee ensures that the chain $\{V^{(j)}\}$ converges to the target marginal posterior $p_\theta(V \mid Y)$ at a geometric rate. However, the basic Monte Carlo estimator $\widehat{\nabla \ell}_k(\theta; Y)$ in \eqref{eq:mc-grad} involves the joint samples $\{(V^{(j)}, W^{(j)})\}$. To rigorously apply the ergodic limit theorems established for the $V$-chain, it is desirable to construct a gradient estimator that depends \emph{solely} on the sequence $\{V^{(j)}\}$. 
Fortunately, the conditional model structure allows us to integrate out the latent field $W$ (or $M$) analytically given $(V,Y)$, yielding a function of $V$ alone. This process, known as Rao--Blackwellization, not only aligns the estimator with our theoretical results for $V$ but also reduces the variance of the estimator.

By the law of total variance, conditioning on a sub-sigma-algebra reduces the variance of an estimator: if $\delta(X)$ is an integrable random variable, then
\[
  \mathrm{Var}\!\bigl(\mathbb{E}[\delta(X) \mid Y]\bigr) \le \mathrm{Var}(\delta(X)).
\]
In our setting, the latent field $W$ (or $M$ in the non-centered parametrization) is Gaussian conditional on $(V, Y)$. This structure allows us to integrate out the Gaussian layer analytically, defining the Rao--Blackwellized score as:
\[
  g^{\mathrm{RB}}_\theta(Y,V)
  :=
  \mathbb{E}_\theta\!\left[\nabla_\theta \log p_\theta(Y,V,W) \mid Y, V\right].
\]
By the tower property, this score remains an unbiased estimator of the gradient:
\[
  \mathbb{E}_\theta\!\left[g^{\mathrm{RB}}_\theta(Y,V) \mid Y\right]
  =
  \mathbb{E}_\theta\!\left[\nabla_\theta \log p_\theta(Y,V,W) \mid Y\right]
  =
  \nabla_\theta \ell(\theta; Y).
\]
Consequently, we can construct the Rao--Blackwellized gradient estimator using only the $V$-component of the Gibbs samples, $\{V^{(j)}\}_{j=1}^k$:
\begin{equation}\label{eq:rb-grad}
  \widehat{\nabla \ell}^{\mathrm{RB}}_k(\theta; Y)
  =
  \frac{1}{k} \sum_{j=1}^k g^{\mathrm{RB}}_\theta(Y, V^{(j)}).
\end{equation}
A key computational advantage is that $g^{\mathrm{RB}}_\theta(Y,V)$ typically admits a closed-form expression involving only the conditional mean and covariance of $W \mid (V, Y)$, eliminating the need to sample $W$ for gradient estimation (except as a step in the Gibbs sampling of $V$).

The convergence of this estimator is guaranteed by the ergodic properties of the Markov chain. Suppose the chain $\{V^{(j)}\}$ is (geometrically) ergodic with invariant distribution $p_\theta(V \mid Y)$ and that the score function satisfies the integrability condition
\begin{equation}\label{eq:L1-condition}
  \mathbb{E}_\theta\!\left[\bigl\|g^{\mathrm{RB}}_\theta(Y,V)\bigr\| \mid Y\right] < \infty.
\end{equation}
Then, the Markov chain ergodic theorem ensures almost sure convergence:
\[
  \widehat{\nabla \ell}^{\mathrm{RB}}_k(\theta; Y)
  \longrightarrow
  \mathbb{E}_\theta\!\left[g^{\mathrm{RB}}_\theta(Y,V) \mid Y\right]
  =
  \nabla_\theta \ell(\theta; Y),
  \qquad \text{as } k \to \infty.
\]
\begin{color}{black}
A subtle but important point is that, in practice, the gradient estimator
$g^{\mathrm{RB}}_\theta(Y,V)$ is evaluated using a finite-length Gibbs sampler targeting
$p_\theta(V\mid Y)$. Unless the Markov chain is initialized strictly in stationarity, the resulting
gradient estimate is not exactly unbiased for finite Gibbs runs, but rather
asymptotically unbiased. The residual bias arises from the deviation of the chain's finite-step 
marginal distribution from the target stationary distribution.

Nevertheless, convergence of the resulting stochastic gradient scheme can still be
guaranteed. Classical stochastic approximation theory allows for biased gradient estimators,
provided that the bias decays sufficiently fast relative to the step-size sequence.
Specifically, if the SGD update admits the decomposition
\[
\theta^{(t+1)} = \theta^{(t)} - \gamma_t \bigl(
\nabla f(\theta^{(t)}) + \xi_t + b_t
\bigr),
\]
where $\xi_t$ is a martingale-difference noise term and $b_t$ is the bias
induced by incomplete MCMC convergence, almost sure convergence is preserved as
long as the bias sequence satisfies the summability condition
\(
\sum_{t=1}^\infty \gamma_t \| b_t \| < \infty
\)
\cite[Theorem 2.1]{kushner2003stochastic}.

In the present context, we rely on the geometric ergodicity of the Gibbs sampler.
Under mild regularity assumptions, such as moment conditions ensuring sufficient control of the
Monte Carlo error, geometric ergodicity established via a drift condition is known to yield
geometric decay of the bias associated with finite-length MCMC runs; see, for example,
\cite[Fact~10]{Roberts2004} for representative conditions of this type.
In particular, the bias of the gradient estimator after $k$ Gibbs iterations satisfies
\[
\| b_t \| \le M(\theta^{(t)}, V_{\mathrm{init}})\, \rho^{k},
\]
where $\rho<1$ denotes a contraction rate and $M$ is a finite constant depending on the current
parameter value and the initialization of the Markov chain.
Consequently, under standard step-size schedules satisfying
$\sum_t \gamma_t = \infty$ and $\sum_t \gamma_t^2 < \infty$, the accumulated contribution of the
MCMC bias is asymptotically negligible and does not affect the limiting dynamics of the
stochastic gradient algorithm.

\end{color}

\subsection{Integrability of score functions and the role of parametrization}\label{sec:L1}

To ensure the almost sure convergence of the gradient estimator, the integrability condition~\eqref{eq:L1-condition} must be satisfied under the invariant distribution $p_\theta(V \mid Y)$. Although the marginal chain for $V$ is the same regardless of the parametrization used for the latent field, the functional form of the score $g^{\mathrm{RB}}_\theta(Y,V)$ differs significantly between the centered and non-centered representations. This choice matters because verifying integrability can be challenging if the score function exhibits singularities or effectively undefined regions in the domain of $V$.


Under the \emph{centered} parametrization, the complete-data score vector inherently contains terms
proportional to inverse powers of the mixing variables. In particular, the score with respect to
$\mu$ is
\begin{align*}
  \partial_\mu \log p_\theta(Y,V,W)
   & = \frac{1}{\sigma^2} \sum_{i=1}^n \left[
    (KW)_i - \mu V_i + 2\mu h_i - h_i\frac{ (KW)_i+ \mu h_i}{V_i} 
    \right],
\end{align*}
so verifying the regularity condition \eqref{eq:L1-condition} requires control of posterior
expectations of negative moments of the mixing variables (explicit expressions for all score
components under both parametrizations are given in Appendix~\ref{app:gradients}).
The following lemma provides a sharp condition for the existence of these posterior expectations, revealing an integrability issue when $\alpha\le 1/2$.

\begin{lemma}
\label{lem:L1_threshold_centered}
Fix $i \in \{1, \dots, n\}$ and assume that $K$ is invertible. Under the condition $0 < \pi(y) < \infty$, the posterior expectation 
\[
\mathbb{E} \left[ \left| \frac{(KW)_i + \mu h_i}{V_i} \right| \,\middle|\, Y=y \right]
\]
is finite if and only if $\alpha > 1/2$. In particular, this expectation diverges whenever $\alpha \le 1/2$.
\end{lemma}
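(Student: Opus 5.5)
Throughout I read $\alpha$ as the Gamma shape parameter of the GAL regime: $b=0$, $a>0$, $p=\alpha>0$, as in the discussion preceding the lemma. I would work in the non-centered variables. Since $K$ is invertible, $M:=KW+\mu h$ is a bijective affine change of variables, so
\[
\mathbb E\bigl[|(KW)_i+\mu h_i|/V_i \,\big|\, Y=y\bigr]=\mathbb E\bigl[|M_i|/V_i \,\big|\, Y=y\bigr]
\]
under \eqref{eq:non-centered-combined}. I then write this expectation as
\[
\pi(y)^{-1}\int\!\!\int |M_i|\,v_i^{-1}\,\pi(y\mid M)\,\pi(M\mid V)\,\pi(V)\,dV\,dM,
\]
and use $0<\pi(y)<\infty$ to reduce the claim to finiteness of the numerator. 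The first step is to integrate out $V$ analytically. Because $\pi(y\mid M)$ does not involve $V$ and everything factorizes over components, the integral over $V$ becomes a product.

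For each $j\neq i$, integrating out $V_j$ gives the GAL marginal density $f(M_j)$, which is integrable. For the $i$-th coordinate it gives
\[
g(M_i):=|M_i|\,e^{\mu M_i/\sigma^2}\int_0^\infty v^{\alpha-5/2}\exp\Bigl\{-\frac{M_i^2}{2\sigma^2 v}-\frac{\tilde a v}{2}\Bigr\}\,dv ,
\]
where $\tilde a=a+\mu^2/\sigma^2$. The $v$-integral is a GIG normalizing constant, namely a multiple of $(M_i^2)^{(\alpha-3/2)/2}K_{\alpha-3/2}(\sqrt{\tilde a}\,|M_i|/\sigma)$. The next step is to read off the local and tail behaviour from the standard asymptotics $K_\nu(x)\sim \Gamma(|\nu|)2^{|\nu|-1}x^{-|\nu|}$ as $x\to0$ (with $\nu\neq0$, and $K_0(x)\sim -\log x$), together with $K_\nu(x)\sim\sqrt{\pi/(2x)}\,e^{-x}$ as $x\to\infty$. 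This gives, as $M_i\to0$,
\[
g(M_i)\asymp
\begin{cases}
|M_i|^{2\alpha-2}, & \alpha<3/2,\\
|M_i|\,\lvert\log|M_i|\rvert, & \alpha=3/2,\\
|M_i|, & \alpha>3/2 .
\end{cases}
\]
As $|M_i|\to\infty$, $g$ decays like $e^{-(\sqrt{\tilde a}-\mu\,\mathrm{sgn}(M_i))|M_i|/\sigma}$ times a power, and the exponent is negative because $\sqrt{\tilde a}>|\mu|$ when $a>0$. Hence $g$ is integrable near $0$ if and only if $2\alpha-2>-1$, i.e.\ $\alpha>1/2$, and it is always integrable at infinity.

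It remains to show that the other factors neither create nor destroy integrability. For sufficiency, I bound the Gaussian likelihood $\pi(y\mid M)\le C$ uniformly in $M$. The numerator is then at most $C\int g(M_i)\,dM_i\prod_{j\ne i}\int f(M_j)\,dM_j<\infty$ whenever $\alpha>1/2$. For necessity, I restrict to a box $\{|M_j|\le 1 \text{ for all } j\}$. On this compact set $\pi(y\mid M)$ is continuous and strictly positive, so it is at least some $c>0$, and each $f(M_j)$ is integrable with positive mass there. The numerator is then at least $c\,\int_{|M_i|\le1} g(M_i)\,dM_i\cdot\prod_{j\ne i}\int_{|M_j|\le 1}f(M_j)\,dM_j$, which is infinite when $\alpha\le1/2$.

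I expect the main obstacle to be keeping the Bessel asymptotics uniform across the different regimes of $\alpha$, including the boundary case $\alpha=1/2$ where $2\alpha-2=-1$ gives a non-integrable $|M_i|^{-1}$ singularity. A further point to verify is that Tonelli's theorem justifies swapping the $V$ and $M$ integrals, which holds because the integrand is nonnegative. I would also check that the GAL marginal $f(M_j)$ is indeed integrable even for small $\alpha$, where it has an integrable singularity $|M_j|^{2\alpha-1}$ at the origin.
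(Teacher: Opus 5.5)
Your proof is correct, and it reaches the threshold by integrating in the opposite order from the paper. The paper stays in the centered coordinates and keeps $V_i$ as the outer variable. For sufficiency it uses $\mathbb E(|T|\mid v)\le \sigma\sqrt{2/\pi}\,\sqrt v+|\mu|v$, where $T=(KW)_i+\mu h_i$ and $T\mid v\sim N(\mu v,\sigma^2 v)$; this reduces everything to the prior integral $\int (\sigma\sqrt{2/\pi}\,v^{-1/2}+|\mu|)\,p_V(v)\,dv$. For necessity it restricts three times: to a cube around $z^\star$ on which the likelihood is bounded below, to $v\in(0,\delta)$, and to a $\sqrt v$-scaled window $t\in[c\sigma\sqrt v,2c\sigma\sqrt v]$. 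That window has Gaussian mass bounded below and satisfies $t/v\ge c\sigma v^{-1/2}$, so the divergence appears as $\int_0^\delta v^{\alpha-3/2}\,dv=\infty$. You instead integrate $V$ out first, which is a legitimate Tonelli swap of the same nonnegative integral. The singularity then appears in the $M_i$ variable, as $|M_i|^{2\alpha-2}$ at the origin.

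The paper's route is more elementary. It needs only Gaussian moments and interval probabilities, and it uses nothing about the prior except its $v^{\alpha-1}$ behaviour at $0$, so it transfers verbatim to any mixing law with that local behaviour. Yours needs the closed-form GIG normalizing constant, the three Bessel regimes, and a separate tail check that relies on $a>0$. In exchange you get an exact description of the weighted marginal $g$. Your necessity step is also simpler: a fixed box around $M=0$ suffices, with no $v$-dependent window.

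One small slip in your tail computation: the exponent is $-(\sqrt{\tilde a}-\mu\,\mathrm{sgn}(M_i)/\sigma)\,|M_i|/\sigma$. The condition you need is therefore $\sqrt{\tilde a}>|\mu|/\sigma$, which does hold because $a>0$. What you wrote agrees with this only when $\sigma=1$.
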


The proof is given in Appendix~\ref{app:proofs-estimation}. The lemma shows that this obstruction is not merely a prior artefact: whenever the likelihood does
not suppress mass near $V_i\approx 0$ on events where the coefficient $|h_i\bigl((KW)_i+\mu h_i\bigr)|$ is bounded away
from zero, the posterior expectation of the centered score component becomes infinite. In the
centered formulation, both conditions arise naturally in many settings, since the likelihood need
not decay rapidly as $V_i\to 0$ and the coefficients $h_i\bigl((KW)_i+\mu h_i\bigr)$ are typically
non-degenerate on sets of positive probability. Consequently, the regularity condition \eqref{eq:L1-condition} is not
automatic and may fail without additional, model-specific assumptions controlling the behaviour of
the likelihood near the origin.

However, the \emph{non-centered} parametrization introduced in Section~\ref{sec:gibbs} 
can be used to circumvent these difficulties. A key advantage of this representation is that the resulting score functions typically have a much more benign algebraic structure with respect to $V$. In particular, the gradients do not involve singular negative powers of $V$; instead, they generally behave polynomially. For instance, the corresponding complete-data score in the non-centered parametrization is given by
\begin{align*}
  \partial_\mu \log p_\theta(Y,V,M)
  = \frac{1}{\sigma^2}\sum_{i=1}^n (M_i-\mu V_i)
  + \frac{1}{\sigma_\epsilon^2}(A K^{-1} h)^\top
  \left( Y - X \beta - A K^{-1} (M - \mu h) \right).
\end{align*}

Because standard drift conditions (such as those in Lemma~\ref{lem:drift-a0-mune0} and Lemma~\ref{lem:drift-nullsmall}) readily provide bounds on polynomial moments of $V$, verifying the integrability condition becomes straightforward. For this reason, we rely on the non-centered parametrization to theoretically justify the convergence of our stochastic gradient estimators.

We remark that Lemma~\ref{lem:L1_threshold_centered} pertains to the standard gradient estimator. The Rao-Blackwellized estimator is expected to behave better in both parametrizations due to its inherently lower variance. While this variance reduction can sometimes mitigate integrability issues, we favor the strategy of changing the parametrization, which resolves the problem structurally.


\subsection{Summary}\label{sec:sgd-summary}

We conclude by summarizing how the Gibbs sampler developed in
Section~\ref{sec:gibbs} enables principled SGD estimation for
maximum-likelihood inference of LLnGMs.

Fisher's identity (Section~\ref{sec:fisher}) provides the key link between
likelihood-based inference and posterior simulation by expressing the
log-likelihood gradient as a conditional expectation of the complete-data
score. Although the marginal likelihood is analytically intractable, its
gradient can be estimated using posterior samples of the latent variables,
forming the basis for gradient-based optimization and MCMC-based maximum
likelihood estimation.

Exploiting the hierarchical Gaussian structure further allows the construction
of Rao-Blackwellized gradient estimators (Section~\ref{sec:rb}) by analytically
integrating out the latent Gaussian field conditional on the mixing variables.
The resulting estimator depends only on $V$, reduces Monte Carlo variance, and
aligns naturally with the geometric ergodicity results established for the
$V$-marginal Gibbs chain. Under mild integrability conditions, the ergodic
theorem guarantees almost sure convergence of the gradient estimates.

Crucially, Section~\ref{sec:L1} shows that the validity of this convergence
depends on the integrability of the score function, which in turn is strongly
affected by the chosen parametrization. While the marginal distribution of $V$
is invariant, centered parametrizations typically yield score functions
involving inverse powers of $V$, making integrability difficult to verify, and in some cases not hold. In
contrast, the non-centered parametrization leads to polynomial dependence on
$V$, for which integrability follows directly from standard drift conditions.

Overall, the non-centered parametrization emerges not merely as a computational
device but as a key theoretical ingredient that enables rigorously justified
Rao–Blackwellized SGD estimation for scalable 
maximum-likelihood inference of LLnGMs.

\section{Proof of the main result} \label{chp:proof}

In what follows, for symmetric matrices $A$ and $B$, we write $B \succeq A$ (resp.\ $B \succ A$) to denote the Loewner (resp.\ strict Loewner) partial order, meaning that $B - A$ is positive semi-definite (resp.\ positive definite).

\subsection{Trace-class property proof}\label{sec:proof-trace-class}

We first state three technical lemmas used in the proof of Theorem~\ref{thm:trace-class}; their proofs are deferred to Appendix~\ref{app:proofs}.

\begin{lemma}\label{lem:quadratic_form_lemma}
  Let $A, B \in \mathbb{R}^{n\times n}$ be symmetric positive-definite matrices, and $u, v \in \mathbb{R}^n$. Then
  \begin{equation*}
    \begin{split}
      u^{\top}Au + v^{\top}Bv &- (u+v)^{\top}(A^{-1}+B^{-1})^{-1}(u+v) 
      = (Au - Bv)^{\top}(B+A)^{-1}(Au-Bv).
    \end{split}
  \end{equation*}
  In particular, 
    $u^{\top}Au + v^{\top}Bv \geq (u+v)^{\top}(A^{-1}+B^{-1})^{-1}(u+v)$.
\end{lemma}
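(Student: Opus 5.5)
The plan is to prove the identity by completing the square in a single quadratic form, and then read off the inequality from positive definiteness. Set $S := A+B$, which is symmetric positive definite. The first step is a standard matrix identity for the parallel sum:
\[
(A^{-1}+B^{-1})^{-1} = A(A+B)^{-1}B = B(A+B)^{-1}A = A - A S^{-1} A = B - B S^{-1} B .
\]
These follow from $A^{-1}+B^{-1} = A^{-1}(A+B)B^{-1}$, which gives $(A^{-1}+B^{-1})^{-1} = B S^{-1} A$. For the remaining forms, write $A S^{-1} A = A S^{-1}(S-B) = A - A S^{-1} B$, and argue symmetrically for $B$. The transpose of $BS^{-1}A$ is $AS^{-1}B$, so symmetry of the parallel sum also gives $AS^{-1}B = BS^{-1}A$.

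The second step is to expand the left-hand side. Denote $P := (A^{-1}+B^{-1})^{-1}$. Expanding $(u+v)^\top P (u+v) = u^\top P u + v^\top P v + 2u^\top P v$ and substituting $P = A - AS^{-1}A$ in the $u$-term, $P = B - BS^{-1}B$ in the $v$-term, and $P = AS^{-1}B$ in the cross term, the left-hand side becomes
\[
u^\top A S^{-1} A u + v^\top B S^{-1} B v - 2 u^\top A S^{-1} B v .
\]
This is exactly $(Au - Bv)^\top S^{-1} (Au - Bv)$, using the symmetry of $S^{-1}$ to combine the cross terms. That gives the stated identity.

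The inequality then follows immediately, since $S^{-1} \succ 0$ makes the right-hand side nonnegative.

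The only point requiring care is the bookkeeping of the three equivalent forms of $P$, and in particular checking that $AS^{-1}B$ is symmetric so that the cross term is unambiguous. There is no real obstacle beyond that. A slicker but equivalent route would be to minimize $x^\top A x + (w-x)^\top B (w-x)$ over $x$ with $w = u+v$: the minimum value is $w^\top P w$, and the gap between the value at $x=u$ and the minimum is $(u-x^*)^\top S (u-x^*)$. Expressing $u - x^* = S^{-1}(Au - Bv)$ recovers the same formula. I would present the direct expansion and note this variational interpretation only as a sanity check.
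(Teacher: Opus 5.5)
Your proposal is correct and follows essentially the same route as the paper: derive $(A^{-1}+B^{-1})^{-1}=B(A+B)^{-1}A=A(A+B)^{-1}B$ from $A^{-1}+B^{-1}=A^{-1}(A+B)B^{-1}$, rewrite $A-P=A(A+B)^{-1}A$ and $B-P=B(A+B)^{-1}B$, then expand and recombine into $(Au-Bv)^\top(A+B)^{-1}(Au-Bv)$. The variational check you mention is a valid extra that the paper does not use.
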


\begin{lemma}\label{lem:K_bound}
  For any $p \in \mathbb{R}$, there exists a constant $C > 0$ such that for all $x > 0$,
    $K_{p}(x) > Cx^{-|p|}(1+x)^{-1/2+|p|}e^{-x}$.
\end{lemma}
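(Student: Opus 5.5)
Since $K_{-p}=K_p$ for every real $p$, I would assume without loss of generality that $p\ge 0$, so that $|p|=p$. The plan is to study the normalized ratio
\[
  R(x):=K_p(x)\,x^{p}\,(1+x)^{1/2-p}\,e^{x},\qquad x>0,
\]
and show that $\inf_{x>0}R(x)>0$. Taking $C$ to be half of this infimum then gives the strict inequality claimed in Lemma~\ref{lem:K_bound}.

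\textbf{Positivity and continuity.} I would use the integral representation $K_p(x)=\int_0^\infty e^{-x\cosh t}\cosh(pt)\,dt$, valid for $x>0$. It shows directly that $K_p(x)>0$ for all $x>0$. Continuity of $K_p$ on $(0,\infty)$ follows by dominated convergence. Hence $R$ is continuous and strictly positive on $(0,\infty)$, and on every compact interval $[\varepsilon,N]\subset(0,\infty)$ it attains a positive minimum.

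\textbf{Behaviour at the endpoints.} I would use the standard asymptotics of the modified Bessel function.
\begin{itemize}
  \item As $x\to0$ with $p>0$: $K_p(x)\sim \tfrac12\Gamma(p)(x/2)^{-p}$. Since $(1+x)^{1/2-p}e^{x}\to1$, we get $R(x)\to 2^{p-1}\Gamma(p)>0$.
  \item As $x\to0$ with $p=0$: $K_0(x)\sim-\log x\to\infty$, so $R(x)\to\infty$.
  \item As $x\to\infty$ (any $p$): $K_p(x)\sim\sqrt{\pi/(2x)}\,e^{-x}$. Since $x^{p}(1+x)^{1/2-p}\sim x^{1/2}$, we get $R(x)\to\sqrt{\pi/2}>0$.
\end{itemize}

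\textbf{Conclusion.} Choose $\varepsilon>0$ small and $N$ large so that $R\ge c_0>0$ on $(0,\varepsilon]\cup[N,\infty)$. Combined with the positive minimum of $R$ on $[\varepsilon,N]$, this gives $\inf_{x>0}R(x)>0$.

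\textbf{Main obstacle.} The only delicate point is making the endpoint asymptotics rigorous and uniform, including the logarithmic case $p=0$. If I wanted to avoid quoting these asymptotics, I would bound the integral representation directly:
\begin{itemize}
  \item For $x\ge1$, use $\cosh t\le 1+t^2\cosh t$ on a bounded $t$-range, which yields a lower bound of order $x^{-1/2}e^{-x}$.
  \item For $x\le1$, restrict the integral to $t\ge\log(2/x)$, where $\cosh(pt)\gtrsim x^{-p}$ and $x\cosh t$ stays bounded on a window of unit length. This yields a lower bound of order $x^{-p}$ for $p>0$, and a positive constant for $p=0$, which suffices since $x^{-0}=1$.
\end{itemize}
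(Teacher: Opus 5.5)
Your proposal is correct and follows essentially the same argument as the paper: normalize $K_p$ by $x^{-|p|}(1+x)^{-1/2+|p|}e^{-x}$, use the small-$x$ limits $2^{|p|-1}\Gamma(|p|)$ (resp.\ $+\infty$ for $p=0$) and the large-$x$ limit $\sqrt{\pi/2}$, and cover the middle compact interval by continuity and positivity, with $C$ taken strictly below the infimum to get the strict inequality. Your fallback via $K_p(x)=\int_0^\infty e^{-x\cosh t}\cosh(pt)\,dt$ is also sound and makes the proof self-contained without quoting asymptotics; it even removes the compactness step, since the two lower bounds already cover $(0,1]$ and $[1,\infty)$.
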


\begin{lemma}\label{lem:normal_moment_bound}
  Let $p \ge 0$ and $\gamma \ge 0$.
  For any fixed vector $b \in \mathbb{R}^n$, and for 
  all symmetric positive definite matrices $A$,
  there exists a constant
  $C = C(p,n,\|b\|) > 0$, independent of $A$, such that 
  \begin{equation*}
    \begin{split}
      \int \Big(\prod_{i=1}^n (\gamma + x_i^2)^p\Big) & \exp\!\left\{-\tfrac12(x^\top A x - 2 b^\top x)\right\}\,dx                                                                                                      \\
                                                      & \le C\, |A|^{-1/2} \exp\!\left\{\frac12 b^\top A^{-1} b\right\} \prod_{i=1}^n \Big( \gamma^p + [A^{-1}]_{ii}^p \big(1 + \sum_{j=1}^n [A^{-1}]_{jj}^p\big) \Big).
    \end{split}
  \end{equation*}
\end{lemma}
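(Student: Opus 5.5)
The plan is to complete the square in the exponent, which reduces the integral to a Gaussian expectation. Then I would bound that expectation factor by factor, controlling the mean of the Gaussian through the diagonal of $A^{-1}$.

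\emph{Step 1 (completing the square).} Write $\Sigma := A^{-1}$ and $m := \Sigma b$. Then
\[
x^\top A x - 2b^\top x = (x-m)^\top A (x-m) - b^\top \Sigma b .
\]
Hence the integral equals
\[
(2\pi)^{n/2}|A|^{-1/2}\exp\{\tfrac12 b^\top A^{-1} b\}\;\E\Big[\prod_{i=1}^n(\gamma+X_i^2)^p\Big],\qquad X\sim N(m,\Sigma).
\]
It therefore suffices to show
\[
\E\Big[\prod_i(\gamma+X_i^2)^p\Big]\le C\prod_i\big(\gamma^p+\Sigma_{ii}^p(1+S_p)\big),\qquad S_p:=\sum_j\Sigma_{jj}^p,
\]
with $C$ depending only on $p$, $n$ and $\|b\|$. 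The case $p=0$ is trivial, so assume $p>0$.

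\emph{Step 2 (controlling the mean).} This is the step where the dependence on $A$ through $m$ has to be removed, and I expect it to be the only real obstacle. Apply Cauchy--Schwarz in the $\Sigma$-inner product:
\[
|m_i| = |e_i^\top\Sigma b| \le \sqrt{\Sigma_{ii}}\sqrt{b^\top\Sigma b} \le \sqrt{\Sigma_{ii}}\,\|b\|\sqrt{\lambda_{\max}(\Sigma)} .
\]
Use $\lambda_{\max}(\Sigma)\le \operatorname{tr}\Sigma=\sum_j\Sigma_{jj}$. This gives
\[
|m_i|^{2p}\le \|b\|^{2p}\,\Sigma_{ii}^p\Big(\sum_j\Sigma_{jj}\Big)^p\le \|b\|^{2p}\,n^{\max(p-1,0)}\,\Sigma_{ii}^p S_p .
\]
The last inequality is the elementary power-mean bound: $(\sum_j s_j)^p\le\sum_j s_j^p$ for $p\le1$, and $(\sum_j s_j)^p\le n^{p-1}\sum_j s_j^p$ for $p\ge1$.

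\emph{Step 3 (factorwise bound).} Write $X_i = m_i+\sqrt{\Sigma_{ii}}\,Z_i$, where each $Z_i$ is standard normal; the $Z_i$ are correlated, but this does not matter below. Use $(s+t)^p\le c_p(s^p+t^p)$ for $s,t\ge0$ twice, once to split $\gamma+X_i^2$ and once to split $X_i^2$. Combining with Step 2,
\[
(\gamma+X_i^2)^p\le c\big(\gamma^p+\Sigma_{ii}^p|Z_i|^{2p}+\Sigma_{ii}^p S_p\big)\le c'\,(1+|Z_i|^{2p})\big(\gamma^p+\Sigma_{ii}^p(1+S_p)\big),
\]
where $c'$ depends only on $p$, $n$ and $\|b\|$. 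The second factor on the right is deterministic.

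\emph{Step 4 (taking expectations).} Multiply the bounds from Step 3 over $i$ and take expectations. The remaining random factor is handled by the generalized Hölder inequality:
\[
\E\prod_i(1+|Z_i|^{2p})\le\prod_i\big(\E(1+|Z_i|^{2p})^n\big)^{1/n}.
\]
Each factor on the right is a finite standard-normal moment depending only on $p$ and $n$, regardless of the correlations. Absorbing $(2\pi)^{n/2}$ and these constants into $C$ gives the claimed bound, uniformly over symmetric positive definite $A$.
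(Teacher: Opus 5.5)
Your proof is correct and follows essentially the same route as the paper: complete the square to reduce to a Gaussian expectation under $N(A^{-1}b,A^{-1})$, decouple the correlated coordinates with the generalized H\"older inequality, split mean from fluctuation, and bound $|m_i|^{2p}$ by a multiple of $[A^{-1}]_{ii}^p\sum_j[A^{-1}]_{jj}^p$ via Cauchy--Schwarz. The differences are minor. You make the mean/fluctuation split pointwise and apply H\"older only to the standardized factors $1+|Z_i|^{2p}$, which avoids the paper's passage to an integer moment $k>np$ and the later $p/k$-power step. You also bound $m_i$ by Cauchy--Schwarz in the $A^{-1}$-inner product together with $\lambda_{\max}\le\mathrm{tr}$, where the paper uses the entrywise bound $|[A^{-1}]_{ij}|\le([A^{-1}]_{ii}[A^{-1}]_{jj})^{1/2}$.
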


We now proceed with the proof of Theorem~\ref{thm:trace-class}.

\begin{proof}[Proof of Theorem~\ref{thm:trace-class}]
  Throughout the proof, $C$ denotes a generic positive constant, independent of $V$ and $M$, whose value may change from line to line. Without loss of generality, we assume $\sigma=\sigma_\epsilon=1$.
  Let $\tilde p = p - \tfrac{1}{2}$ and $\tilde a = a + \mu^2 > 0$.
  Recalling that $B = AK^{-1}$, we define the  quantities
    $D_V=\mathrm{diag}(V_1,\dots,V_n)$,
    $Q=B^\top B+D_V^{-1}$, and 
    $m=B^\top x+\mu\mathbf 1$,
  where $x:=Y-X\beta+\mu Bh$ is a fixed vector. 
  Recall that the conditional distribution of $M$ is 
  \[
    \pi(M\mid V,Y)=C\,|Q|^{1/2}\exp\!\left\{-\tfrac12(M-Q^{-1}m)^\top Q(M-Q^{-1}m)\right\},
  \]
  and that, conditionally on $M$, the components $V_i$ are independent with densities
  \[
    \pi(V\mid M,Y)
    =
    C\prod_{i=1}^n
    \left[
    \frac{\bigl(\sqrt{\tilde a(b+M_i^2)}\bigr)^{-\tilde p}}{K_{\tilde p}\bigl(\sqrt{\tilde a(b+M_i^2)}\bigr)}
    \,V_i^{\tilde p-1}
    \right]
    \exp\!\left\{-\tfrac12\Bigl(\tilde a\,\mathbf 1^\top D_V\mathbf 1+\sum_{i=1}^n \frac{b+M_i^2}{V_i}\Bigr)\right\}.
  \]

  \paragraph*{Case 1: $a>0$, $b>0$, $p\in\mathbb R$}
  By Lemma~\ref{lem:K_bound}, for a fixed $\tilde p \in \mathbb{R}$, there exists a constant $C > 0$ such that for all $t > 0$,
    $K_{\tilde p}(t) \ge C t^{-|\tilde p|} (1+t)^{-1/2+|\tilde p|} e^{-t}$.
  Setting $t_i = \sqrt{\tilde a(b+M_i^2)}$, we have
  \[
    \frac{t_i^{-\tilde p}}{K_{\tilde p}(t_i)} \le C t_i^{|\tilde p|-\tilde p} (1+t_i)^{1/2-|\tilde p|} e^{t_i}.
  \]
  Define the exponents
    $\eta := (|\tilde p|-\tilde p)/2 \ge 0$ and $\bar\eta := \max \!\left(\nicefrac14-\nicefrac{|\tilde p|}{2}, \, 0\right) \ge 0$.
  Using the bounds $1+t_i \le C(1+b+M_i^2)^{1/2}$ and $t_i \le \sqrt{\tilde ab} + \sqrt{\tilde a}|M_i|$, and noting that $t_i^{|\tilde p|-\tilde p} = (\tilde a(b+M_i^2))^\eta$, we obtain
  \[
    \frac{(\sqrt{\tilde a(b+M_i^2)})^{-\tilde p}}{K_{\tilde p}(\sqrt{\tilde a(b+M_i^2)})} \le C (b+M_i^2)^\eta (1+b+M_i^2)^{\bar\eta} \exp\{\sqrt{\tilde a}|M_i|\}.
  \]
  Substituting this result into the expression for $\pi(V\mid M,Y)$ yields
  \begin{equation}\label{eq:VgivenM_bound_case1}
    \begin{split}
      \pi(V\mid M,Y)
       & \le
      C\Big(\prod_{i=1}^n V_i^{\tilde p-1}(b+M_i^2)^\eta(1+b+M_i^2)^{\bar\eta}\Big) \\
       & \times
      \exp\!\left\{-\tfrac12\Bigl(\tilde a\,\mathbf 1^\top D_V\mathbf 1+b\,\mathbf 1^\top D_V^{-1}\mathbf 1+M^\top D_V^{-1}M\Bigr)
      +\sqrt{\tilde a}\sum_{i=1}^n|M_i|\right\}.
    \end{split}
  \end{equation}

  Setting $\Phi:=\{\pm1\}^n$, we note that $\sum_i |M_i| = \max_{\phi \in \Phi} \phi^\top M$. Using the inequality $\exp(\max_j u_j) \le \sum_j \exp(u_j)$, we have
  \[
    \exp\Bigl(\sqrt{\tilde a}\sum_i|M_i|\Bigr) \le \sum_{\phi\in\Phi} \exp\{\sqrt{\tilde a}\,\phi^\top M\}.
  \]
  By combining this bound with \eqref{eq:VgivenM_bound_case1} and the expression for $\pi(M\mid V,Y)$, we identify the $M$-dependent exponent for each $\phi \in \Phi$ as
    $-\tfrac12 M^\top(Q+D_V^{-1})M + (m+\sqrt{\tilde a}\phi)^\top M$.
  This leads to the following bound on the joint density:
  \begin{equation}\label{eq:joint_bound_before_Mint}
    \begin{split}
      \pi(V\mid M,Y)\pi(M\mid V,Y)
       & \le
      C\,|Q|^{1/2}
      \Big(\prod_i V_i^{\tilde p-1}(b+M_i^2)^\eta(1+b+M_i^2)^{\bar\eta}\Big)                                                             \\
       & \quad\times
      \exp\!\left\{-\tfrac12\Bigl(\tilde a\,\mathbf 1^\top D_V\mathbf 1+b\,\mathbf 1^\top D_V^{-1}\mathbf 1+m^\top Q^{-1}m\Bigr)\right\} \\
       & \quad\times
      \sum_{\phi\in\Phi}
      \exp\!\left\{-\tfrac12\Bigl(M^\top\tilde Q\,M-2(m+\sqrt{\tilde a}\phi)^\top M\Bigr)\right\},
    \end{split}
  \end{equation}
  where $\tilde Q := Q + D_V^{-1} = B^\top B + 2D_V^{-1} \succ 0$. 

  Fix $\phi \in \Phi$ and let $d_\phi := m + \sqrt{\tilde a} \phi$. Defining $f(M) := \prod_{i=1}^n(b+M_i^2)^\eta$ and $g(M) := \prod_{i=1}^n(1+b+M_i^2)^{\bar\eta}$, we apply the Cauchy--Schwarz inequality to obtain:
  \begin{equation*}
    \begin{split}
      \int  f(M)g(M) e^{-\frac{1}{2}(M^\top\tilde Q M-2d_\phi^\top M)} dM 
      \le& \left(\int f(M)^2 e^{-\frac{1}{2}(M^\top\tilde Q M-2d_\phi^\top M)}dM \right)^{1/2} \\
      & \times \left(\int g(M)^2 e^{-\frac{1}{2}(M^\top\tilde Q M-2d_\phi^\top M)}dM \right)^{1/2}.
    \end{split}
  \end{equation*}
  We now apply Lemma~\ref{lem:normal_moment_bound} (with $\gamma=b$, $p=2\eta$ and $\gamma=1+b$, $p=2\bar\eta$) to each factor under the Gaussian kernel $\exp\{-\tfrac12(M^\top\tilde Q M-2d_\phi^\top M)\}$. Since $Q \succeq D_V^{-1}$ implies $Q^{-1} \preceq D_V$, and $\tilde Q \succeq 2D_V^{-1}$ implies $\tilde Q^{-1} \preceq \tfrac12 D_V$, it follows that $[\tilde Q^{-1}]_{ii} \le V_i$ for all $i$. This yields:
  \begin{equation}\label{eq:Mint_bound}
    \begin{split}
      \int \prod_i & (b+M_i^2)^\eta(1+b+M_i^2)^{\bar\eta} \exp\!\left\{-\tfrac12(M^\top\tilde Q M-2d_\phi^\top M)\right\}\,dM \\
       & \le C\,|\tilde Q|^{-1/2}\exp\!\left\{\tfrac12 d_\phi^\top \tilde Q^{-1}d_\phi\right\} 
       \times S_\eta^{n/2}S_{\bar\eta}^{n/2} \prod_i (b^\eta+V_i^\eta)\bigl((1+b)^{\bar\eta}+V_i^{\bar\eta}\bigr),
    \end{split}
  \end{equation}
  where $S_\eta := 1 + \sum_{k=1}^n V_k^{2\eta}$ and $S_{\bar\eta} := 1 + \sum_{k=1}^n V_k^{2\bar\eta}$.
  Integrating \eqref{eq:joint_bound_before_Mint} with respect to $M$, summing over $\phi \in \Phi$, and using the determinantal inequality
  \[
    |Q|^{1/2}|\tilde Q|^{-1/2} = \bigl|I+Q^{-1/2}D_V^{-1}Q^{-1/2}\bigr|^{-1/2} \le 1,
  \]
  we obtain the following bound on the $V$-marginal:
  \begin{equation}
    \begin{aligned}
      \int \pi(V\mid M,Y)\pi(M\mid V,Y)\,dM
       & \le C \Big(\prod_{i=1}^n V_i^{\tilde p-1}(b^\eta+V_i^\eta)\bigl((1+b)^{\bar\eta}+V_i^{\bar\eta}\bigr)\Big) S_\eta^{n/2} S_{\bar\eta}^{n/2} \\
       & \quad \times \exp\!\left\{-\tfrac{1}{2} b\,\mathbf{1}^\top D_V^{-1}\mathbf{1}\right\} \sum_{\phi\in\Phi} \exp\!\left\{-\tfrac{1}{2}\Delta_\phi(V)\right\} \\
       & \le C \max \left\{ \left(\prod_{i=1}^n V_i\right)^{m_1}, \left(\prod_{i=1}^n V_i^{-1}\right)^{m_2} \right\},
    \end{aligned}
    \label{eq:after_Mint}
  \end{equation}
  where $C>0$ is a constant, $m_1, m_2 \in \mathbb{N}$, and the \emph{exponent gap} $\Delta_\phi(V)$ is defined as
  \[
    \Delta_\phi(V) := \tilde a\,\mathbf{1}^\top D_V\mathbf{1} + m^\top Q^{-1}m - (m+\sqrt{\tilde a}\phi)^\top(Q+D_V^{-1})^{-1}(m+\sqrt{\tilde a}\phi).
  \]

  We show that for all $\phi\in\Phi$,
  \begin{equation}\label{eq:Delta_lower}
    \Delta_\phi(V)\ \ge\ \frac38(\sqrt{\tilde a}-|\mu|)^2\sum_{i=1}^n V_i\ -\ \frac12\|x\|_2^2.
  \end{equation}

  Applying Lemma~\ref{lem:quadratic_form_lemma} with $A=D_V$, $B=Q^{-1}$, $u=\sqrt{\tilde a}\phi$, and $v=m$, we obtain:
  \[
    \Delta_\phi(V) = \bigl(\sqrt{\tilde a}D_V\phi-Q^{-1}m\bigr)^\top(D_V+Q^{-1})^{-1}\bigl(\sqrt{\tilde a}D_V\phi-Q^{-1}m\bigr).
  \]
  Since $Q\succeq D_V^{-1}$,  $Q^{-1}\preceq D_V$. Hence, $D_V+Q^{-1}\preceq 2D_V$ and thus
  $(D_V+Q^{-1})^{-1}\succeq \tfrac12 D_V^{-1}$. Therefore,
  \begin{equation}\label{eq:Delta_ge_half}
    \Delta_\phi(V) \ge \tfrac{1}{2} \bigl(\sqrt{\tilde a}D_V\phi-Q^{-1}m\bigr)^\top D_V^{-1}\bigl(\sqrt{\tilde a}D_V\phi-Q^{-1}m\bigr).
  \end{equation}
  Substituting $m = B^\top x + \mu \mathbf{1}$ and defining $z := \sqrt{\tilde a}D_V\phi - \mu Q^{-1} \mathbf{1}$, we find that
  \[
    \sqrt{\tilde a}D_V\phi - Q^{-1}m = z - Q^{-1}B^\top x.
  \]
  Equation \eqref{eq:Delta_ge_half} then becomes:
  \begin{equation}\label{eq:Delta_ge_half2}
    \Delta_\phi(V) \ge \tfrac{1}{2} (z-Q^{-1}B^\top x)^\top D_V^{-1}(z-Q^{-1}B^\top x).
  \end{equation}
  Expanding this expression and applying the inequality $2 u^\top v \le u^\top u + v^\top v$ with $u=x$ and $v=BQ^{-1}D_V^{-1}z$, we obtain
  \begin{equation*}
    \begin{split}
      (z  - Q^{-1}B^\top x)^\top & D_V^{-1}(z-Q^{-1}B^\top x) \\
      &\ge z^\top\Bigl(D_V^{-1}-D_V^{-1}Q^{-1}D_V^{-1}+D_V^{-1}Q^{-1}D_V^{-1}Q^{-1}D_V^{-1}\Bigr)z -\|x\|_2^2,
    \end{split}
  \end{equation*}
  where we have used the relation $B^\top B = Q - D_V^{-1}$ and discarded the non-negative term $x^\top BQ^{-1}D_V^{-1}Q^{-1}B^\top x$. To simplify the notation, let $D := D_V$ and define the symmetric positive definite matrix
    $G := D^{1/2}QD^{1/2} \succ 0$.
  Then $G^{-1}=D^{-1/2}Q^{-1}D^{-1/2}$ and $G^{-2}=D^{-1/2}Q^{-1}D^{-1}Q^{-1}D^{-1/2}$.
  A direct factorization shows that the matrix in parentheses can be rewritten as
  \[
    D^{-1}-D^{-1}Q^{-1}D^{-1}+D^{-1}Q^{-1}D^{-1}Q^{-1}D^{-1}
    =
    D^{-1/2}\bigl(I-G^{-1}+G^{-2}\bigr)D^{-1/2}.
  \]
  Next, observe that for all $t>0$, the scalar inequality
  \[
    1-\frac1t+\frac1{t^2}\ \ge\ \frac34
  \]
  holds, which is equivalent to $\tfrac14 t + t^{-1} \ge 1$. By the spectral functional calculus, this yields the matrix inequality
    $I-G^{-1}+G^{-2}\ \succeq\ \nicefrac34 I$
    for all  $G \succ 0$.
  Consequently, we have:
  \begin{equation*}
    \begin{split}
      z^\top\Bigl(D^{-1}-D^{-1}Q^{-1}D^{-1} &+ D^{-1}Q^{-1}D^{-1}Q^{-1}D^{-1}\Bigr)z \\
      &= z^\top D^{-1/2} \bigl(I-G^{-1}+G^{-2}\bigr) D^{-1/2} z 
      \ge \tfrac34\,z^\top D^{-1}z.
    \end{split}
  \end{equation*}
  This leads to the bound:
  \[
    (z-Q^{-1}B^\top x)^\top D^{-1}(z-Q^{-1}B^\top x) \ge \tfrac34\,z^\top D^{-1}z-\|x\|_2^2.
  \]
  Combining \eqref{eq:Delta_ge_half2} with the above inequality, we obtain:
  \begin{equation}\label{eq:Delta_ge_3over8}
    \Delta_\phi(V) \ge \frac12\Big(\tfrac34\,z^\top D_V^{-1}z-\|x\|_2^2\Big) = \frac{3}{8}\,z^\top D_V^{-1}z - \frac{1}{2}\|x\|_2^2.
  \end{equation}

  Finally, we bound $z^\top D_V^{-1}z$. Since $\phi_i^2=1$, we have:
  \[
    z^\top D_V^{-1}z = \sum_{i=1}^n \frac{(\sqrt{\tilde a}V_i\phi_i-\mu(Q^{-1}\mathbf 1)_i)^2}{V_i}.
  \]
  Applying the weighted Cauchy--Schwarz inequality,
  \[
    \sum_{i=1}^n \frac{r_i^2}{V_i}\ge \frac{\big(\sum_{i=1}^n r_i\big)^2}{\sum_{i=1}^n V_i},
  \]
  with $r_i=\sqrt{\tilde a}V_i-\mu\,\phi_i(Q^{-1}\mathbf 1)_i$, we obtain:
  \[
    z^\top D_V^{-1}z \ge \frac{\bigl(\sqrt{\tilde a}\sum_i V_i-\mu\,\phi^\top Q^{-1}\mathbf 1\bigr)^2}{\sum_i V_i}.
  \]
  Furthermore, using $Q^{-1}\preceq D_V$ and the inequality $2|u^\top v|\le u^\top u+v^\top v$ with $u=Q^{-1/2}\phi$ and $v=Q^{-1/2}\mathbf 1$, it follows that:
  \[
    |\phi^\top Q^{-1}\mathbf 1| \le \tfrac12\phi^\top Q^{-1}\phi+\tfrac12\mathbf 1^\top Q^{-1}\mathbf 1 \le \tfrac12\phi^\top D_V\phi+\tfrac12\mathbf 1^\top D_V\mathbf 1 = \sum_{i=1}^n V_i.
  \]
  This implies that 
    $\bigl|\sqrt{\tilde a}\sum_i V_i-\mu\,\phi^\top Q^{-1}\mathbf 1\bigr| \ge (\sqrt{\tilde a}-|\mu|)\sum_i V_i$,
  and we thus have that 
    $z^\top D_V^{-1}z \ge (\sqrt{\tilde a}-|\mu|)^2\sum_{i=1}^n V_i$.
  Substituting this result into \eqref{eq:Delta_ge_3over8} proves \eqref{eq:Delta_lower}.

  From \eqref{eq:Delta_lower}, we have uniformly in $\phi \in \Phi$:
  \[
    \exp\!\left\{-\tfrac{1}{2}\Delta_\phi(V)\right\} \le \exp\!\left\{\tfrac{1}{4}\|x\|_2^2\right\} \exp\!\left\{-c\sum_{i=1}^n V_i\right\}, \qquad c:=\tfrac{3}{16}(\sqrt{\tilde a}-|\mu|)^2>0.
  \]
  Noting that $\exp\{-\tfrac{1}{2} b\,\mathbf{1}^\top D_V^{-1}\mathbf{1}\} = \exp\{-\tfrac{b}{2}\sum_i V_i^{-1}\}$ and 
  absorbing $\exp\{\|x\|_2^2/4\}$ and $|\Phi|=2^n$ into $C$, we obtain from \eqref{eq:after_Mint}:
  \begin{equation*}
    \begin{split}
      \int \pi(V\mid M,Y)\pi(M\mid V,Y)\,dM 
      &\le C \max \left\{ \left(\prod_{i=1}^n V_i\right)^{m_1}, \left(\prod_{i=1}^n V_i^{-1}\right)^{m_2} \right\} \\
      &\quad \times \exp\!\left\{-c\sum_{i=1}^n V_i-\frac{b}{2}\sum_{i=1}^n V_i^{-1}\right\}.
    \end{split}
  \end{equation*}
  Since $c>0$, the factor $\exp\{-c\sum_i V_i\}$ decays exponentially as any $V_i \to \infty$, dominating the at most polynomial growth of $(\prod_i V_i)^{m_1}$. Similarly, since $b>0$, the factor $\exp\{-\tfrac{b}{2}\sum_i V_i^{-1}\}$ decays exponentially as any $V_i \to 0$, dominating the at most polynomial growth of $(\prod_i V_i^{-1})^{m_2}$. Hence the right-hand side is integrable on $(0,\infty)^n$, and we conclude
  \[
    \int_{(0,\infty)^n}\int_{\mathbb{R}^n} \pi(V\mid M,Y)\pi(M\mid V,Y)\,dM\,dV < \infty,
  \]
  which establishes the trace-class property in Case 1.

  \paragraph*{Case 2: $a>0$, $b=0$, $p>\tfrac12$}

  The argument proceeds identically up to \eqref{eq:after_Mint}, except that the exponential term $\exp\{-\tfrac{b}{2}\sum_i V_i^{-1}\}$ is absent (since $b=0$). The exponential decay as $V_i \to \infty$ remains because $c>0$ still holds (indeed, $a>0$ implies $\sqrt{\tilde a}>|\mu|$). Therefore, we only need to verify integrability near $V_i \to 0$.

  For $p>\tfrac{1}{2}$, we have $\tilde p = p - \tfrac{1}{2} > 0$, and hence $\eta = \tfrac{1}{2}(|\tilde p| - \tilde p) = 0$. Consequently, the leading small-$V$ behavior of the prefactor in \eqref{eq:after_Mint} is
  \[
    \prod_{i=1}^n V_i^{\tilde p-1} = \prod_{i=1}^n V_i^{p-\frac{3}{2}},
  \]
  up to multiplicative factors that remain bounded as $V_i \to 0$ (since $S_\eta \equiv 1$ when $\eta=0$, and $\bar\eta \ge 0$). The $V$-integral near $0$ is therefore finite if and only if $p - \tfrac{3}{2} > -1$, which is equivalent to $p > \tfrac{1}{2}$. This matches the stated hypothesis, and hence the target integral is finite in Case 2.

  This completes the proof of the trace-class property in cases (1)--(2).
\end{proof}

\subsection{Geometric ergodicity proof}\label{sec:geometric-ergodicity-proof}

We establish geometric ergodicity of the marginal Gibbs chain $(V^{(t)})_{t\ge 0}$
by verifying a drift condition and a one-step minorization condition, and then
invoking Rosenthal's drift--minorization theorem \cite[Theorem~12]{rosenthal1995}.
For completeness, we restate it below.

\begin{theorem}[Rosenthal drift--minorization bound {\cite[Theorem~12]{rosenthal1995}}]
  \label{thm:drift-minorization}
  Let $P(x,\cdot)$ be the transition kernel of a Markov chain $(X^{(k)})_{k\ge 0}$ on
  a state space $\mathcal X$ with stationary distribution $\pi$.
  Assume there exist a measurable function $V:\mathcal X\to \mathbb R_{\ge 0}$,
  constants $\lambda\in(0,1)$ and $b<\infty$, such that the drift condition holds:
  \[
    \mathbb E\!\left[V(X^{(1)})\mid X^{(0)}=x\right]\ \le\ \lambda V(x)+b,
    \qquad \forall x\in\mathcal X.
  \]
  Assume further that there exist $\varepsilon>0$, a probability measure $Q(\cdot)$ on
  $\mathcal X$, and $d>2b/(1-\lambda)$ such that the minorization condition holds:
  \[
    P(x,\cdot)\ \ge\ \varepsilon\,Q(\cdot),
    \qquad \forall x\in\mathcal X\ \text{with}\ V(x)\le d.
  \]
  Define
  \[
    \alpha^{-1} := \frac{1+2b+\lambda d}{1+d}\in(0,1),
    \qquad
    A := 1+2(\lambda d+b).
  \]
  Then for any $0<r<1$ and any initial distribution $\nu$,
  \[
    \bigl\| \mathcal L(X^{(k)})-\pi \bigr\|_{\mathrm{TV}}
    \ \le\
    (1-\varepsilon)^{rk}
    \ +\
    \Bigl(\alpha^{-(1-r)}A^{\,r}\Bigr)^k\,
    \Bigl(1+\frac{b}{1-\lambda}+\mathbb E_\nu[V(X^{(0)})]\Bigr).
  \]
  In particular, the chain is geometrically ergodic.
\end{theorem}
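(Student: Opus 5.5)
The plan follows Rosenthal's original coupling argument. Write $C:=\{x:V(x)\le d\}$. I would build a Markov chain $(X^{(k)},X'^{(k)})$ on $\mathcal X\times\mathcal X$ with $X^{(0)}\sim\nu$ and $X'^{(0)}\sim\pi$, so that $X'^{(k)}\sim\pi$ for every $k$. Its transitions are as follows.
\begin{enumerate}
\item If the chains have already met, they move together according to $P$.
\item If $(X^{(k)},X'^{(k)})\in C\times C$, then with probability $\varepsilon$ both jump to a common draw from $Q$. With probability $1-\varepsilon$ they move independently according to the residual kernel $(P(x,\cdot)-\varepsilon Q(\cdot))/(1-\varepsilon)$. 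This kernel is a probability measure precisely because of the minorization condition.
\item Otherwise the two chains move independently according to $P$.
\end{enumerate}
Each marginal is then a copy of the original chain. The coupling inequality gives $\|\mathcal L(X^{(k)})-\pi\|_{\mathrm{TV}}\le \Pr(X^{(k)}\neq X'^{(k)})$.

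Let $N_k$ be the number of times $j<k$ with $(X^{(j)},X'^{(j)})\in C\times C$. For $j:=\lfloor rk\rfloor$ I would split
\[
\Pr(X^{(k)}\neq X'^{(k)})\le \Pr\bigl(X^{(k)}\neq X'^{(k)},\,N_k\ge j\bigr)+\Pr(N_k<j).
\]
The first term is at most $(1-\varepsilon)^{j}$. At each visit to $C\times C$ the coin succeeds independently with probability $\varepsilon$, and one success forces coalescence. This gives the first summand of the bound, up to rounding of $rk$.

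For the second term I would use the bivariate function $h(x,y):=1+V(x)+V(y)$ and derive two drift estimates for the coupled kernel $\bar P$.
\begin{itemize}
\item \emph{Off $C\times C$.} Here $V(x)+V(y)>d$, and the marginal drift gives $\bar P h\le 1+\lambda(V(x)+V(y))+2b$. The ratio $(1+2b+\lambda s)/(1+s)$ is decreasing in $s$ when $d>2b/(1-\lambda)$. Hence $\bar Ph\le \alpha^{-1}h$, where $\alpha^{-1}=(1+2b+\lambda d)/(1+d)<1$.
\item \emph{On $C\times C$.} Here $\bar P h\le 1+2\lambda d+2b=A$, and since $h\ge1$ this is at most $A\,h$. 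For the residual kernel I would check this bound via $\bar P h$ computed against $P$, using $h\ge 1$ to absorb the normalization.
\end{itemize}
From these two estimates I would build the process
\[
M_k:=\alpha^{\,k-N_k}A^{-N_k}\,h\bigl(X^{(k)},X'^{(k)}\bigr),
\]
which is a nonnegative supermartingale: each step outside $C\times C$ gains a factor $\alpha$ and loses one through $\bar Ph\le\alpha^{-1}h$, while each step inside loses at most $A$ and gains it back through the weight $A^{-1}$. On $\{N_k<j\}$ we have $\alpha^{k-N_k}A^{-N_k}\ge \alpha^{k-j}A^{-j}$, using $\alpha>1$ and $A\ge1$. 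Markov's inequality together with $h\ge1$ then gives
\[
\Pr(N_k<j)\le \alpha^{-(k-j)}A^{j}\,\mathbb E[h(X^{(0)},X'^{(0)})]\le \bigl(\alpha^{-(1-r)}A^{r}\bigr)^k\,\mathbb E[h(X^{(0)},X'^{(0)})].
\]
Finally, $\mathbb E[h(X^{(0)},X'^{(0)})]=1+\mathbb E_\nu V+\mathbb E_\pi V$. Integrating the drift condition against $\pi$ gives $\mathbb E_\pi V\le b/(1-\lambda)$, which yields the stated constant.

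To conclude geometric ergodicity, I would choose $r>0$ small enough that $\alpha^{-(1-r)}A^r<1$, which is possible since $\alpha^{-1}<1$. Both terms then decay geometrically.

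The main obstacle is the supermartingale bookkeeping. It requires that the residual-kernel moves on $C\times C$ still satisfy $\bar Ph\le A\,h$, and that once the chains have coalesced the process is frozen or stopped so that $M_k$ stays a supermartingale. I would handle the second point by stopping $M_k$ at the coupling time $T$, since only $\{T>k\}$ matters.
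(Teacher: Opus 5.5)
The paper gives no proof of its own here; it imports the result from \cite[Theorem~12]{rosenthal1995}. Your argument is essentially Rosenthal's coupling proof with the bivariate drift function $h(x,y)=1+V(x)+V(y)$. The skeleton is sound, but the final stated bound needs a few corrections.

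\textbf{The obstacle you single out does not exist, and your proposed fix would not work.} In the supermartingale step, the conditional expectation is taken under the full one-step coupled transition from $(x,y)\in C\times C$. That is the $\varepsilon$-mixture of the common $Q$-draw and the residual moves. Its marginals are exactly $P(x,\cdot)$ and $P(y,\cdot)$, so $\bar P h(x,y)=1+PV(x)+PV(y)\le 1+2(\lambda d+b)=A$, with no separate residual estimate needed.

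If you instead isolate the residual branch, the hypotheses only give
$\bar R h(x,y)=1+\bigl(PV(x)+PV(y)-2\varepsilon\,QV\bigr)/(1-\varepsilon)\le 1+2(\lambda d+b)/(1-\varepsilon)$.
Here $h\ge 1$ cannot absorb the factor $(1-\varepsilon)^{-1}$, so $\bar R h\le A\,h$ is not available in general.

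Likewise, no stopping at $T$ is needed. On the diagonal, $h(x,x)=1+2V(x)$ and the joint move is $P(x,\cdot)$. So both drift inequalities keep holding after the chains meet: off $C\times C$ one has $2V(x)>d$, and on $C\times C$ one has $1+2PV(x)\le A$. Hence $M_k$ is a supermartingale for all $k$.

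\textbf{Rounding.} With $j=\lfloor rk\rfloor$ you only get $(1-\varepsilon)^{\lfloor rk\rfloor}$, which is weaker than the stated first term. Take $j=\lceil rk\rceil$ instead, and use $N_k\le j-1$ on $\{N_k<j\}$. Then $(1-\varepsilon)^j\le(1-\varepsilon)^{rk}$. Markov's inequality gives
$\Pr(N_k<j)\le\alpha^{-k}(\alpha A)^{j-1}\,\mathbb E h\le\alpha^{-k}(\alpha A)^{rk}\,\mathbb E h=\bigl(\alpha^{-(1-r)}A^{r}\bigr)^k\,\mathbb E h$,
because $\alpha A>1$ and $j-1<rk$. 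This yields exactly the stated inequality.

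\textbf{The bound on $\mathbb E_\pi V$.} Integrating the drift condition against $\pi$ is inconclusive if a priori $\mathbb E_\pi V=\infty$. Use truncation instead:
$\mathbb E_\pi[V\wedge K]=\mathbb E_\pi[P^n(V\wedge K)]\le\mathbb E_\pi\bigl[K\wedge(\lambda^n V+b/(1-\lambda))\bigr]$.
This tends to $K\wedge b/(1-\lambda)$ as $n\to\infty$ by dominated convergence, since $V$ is finite-valued. Then let $K\to\infty$.

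A minor correction: $s\mapsto(1+2b+\lambda s)/(1+s)$ is decreasing for every $\lambda<1$. The condition $d>2b/(1-\lambda)$ is what makes $\alpha^{-1}<1$, not what gives the monotonicity.
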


This theorem provides a sufficient condition for geometric ergodicity of the Gibbs sampler. We now establish the drift and minorization conditions for our specific model.

For the model in \eqref{eq:non-centered-combined}, we denote $\rho = \sigma^2 \sigma_{\epsilon}^{-2}$, $\bar{Q} = \sigma^2 Q = \rho B^\top B + D_{V}^{-1}$, and $\bar{m} = \sigma^2 m = \rho B^\top (Y - X \beta + \mu B h ) + \mu \mathbf{1}$. The Gibbs sampler has the following transition distributions:
\begin{subequations}
  \label{eq:gibbs-sampler-non-centered}
  \begin{align}
    M|V, Y            & \sim N(\bar{Q}^{-1} \bar{m}, \sigma^2 \bar{Q}^{-1}) \label{eq:gibbs-m}                                                                                    \\
    \tilde V_i |  M_i, Y & \overset{iid}{\sim} \text{GIG}\left(p-\frac 1 2, a+\left(\frac{\mu}{\sigma}\right)^2, b+\left(\frac{M_i}{\sigma}\right)^2\right) \label{eq:gibbs-v-tilde}
  \end{align}
\end{subequations}

We establish the drift conditions by considering the following three cases:
\begin{description}
  \item[Case 1:] $a=0, p<0, b>0, \mu=0$;
  \item[Case 2:] $a=0, p<0, b>0, \mu\neq0$;
  \item[Case 3:] $a>0, b=0, p>0$.
\end{description}

\subsubsection{Case 1}
We first consider the special case $a = \mu = 0$, which implies $b>0$ and $p<0$. In this case, the transition distributions in \eqref{eq:gibbs-sampler-non-centered} reduce to
\begin{subequations}
  \label{eq:gibbs-sampler-non-centered-a0-mu0}
  \begin{align}
    M|V, Y            & \sim N(\bar{Q}^{-1} \bar{m}, \sigma^2 \bar{Q}^{-1})                                                    \\
    \tilde V_i |  M_i, Y & \overset{iid}{\sim} \text{Inv-Gamma}\left(-p + \frac 1 2, \frac{b}{2} + \frac{M_i^2}{2\sigma^2}\right)
  \end{align}
\end{subequations}
Recall that the density of $\text{Inv-Gamma}(\alpha, \beta)$ with shape parameter $\alpha > 0$ and scale parameter $\beta > 0$ is
\[
  f_X(x) = \frac{\beta^{\alpha}}{\Gamma(\alpha)} x^{-(\alpha+1)} \exp\left(-\frac{\beta}{x}\right), \quad x > 0.
\]

In this case, one can show that the Markov operator is not trace class in general. We state this formally in the following lemma (proved in Appendix~\ref{app:proofs}).

\begin{lemma}
\label{lem:not-trace-class}
  Assume $a=\mu=0$ (implying $p<0$ and $b>0$), and $A=0$. Then, the Markov operator associated with the Gibbs sampler is not trace class.
\end{lemma}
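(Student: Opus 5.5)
The plan is to show that the diagonal integrability criterion \eqref{eq:tc-diagonal} fails, i.e.\ $\int_{\RR^n_+}k(V,V)\,dV=\infty$. By the characterization of \cite[Theorem~2]{qin2019}, this shows that $\Lambda$ is not trace-class. By Proposition~\ref{prop:kernel-equivalence} we may compute $k$ in the non-centered parameterization \eqref{eq:gibbs-sampler-non-centered-a0-mu0}. Rescaling $V$ and $M$ changes $k$ only by a Jacobian, so we take $\sigma=1$ without loss of generality.

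\emph{Step 1: factorization.} With $A=0$ we have $B=AK^{-1}=0$, so $\bar Q=D_V^{-1}$ and $\bar m=\mu\mathbf 1=0$. Hence $M\mid V,Y\sim N(0,D_V)$ with independent coordinates $M_i\sim N(0,V_i)$. Each $\tilde V_i\mid M_i$ is $\text{Inv-Gamma}(\alpha,\beta_i)$, where
\[
\alpha:=\tfrac12-p>\tfrac12,\qquad \beta_i:=\tfrac12(b+M_i^2).
\]
The transition density therefore factorizes as $k(V,\tilde V)=\prod_{i=1}^n k_1(V_i,\tilde V_i)$, with
\[
k_1(v,\tilde v)=\int_{\RR}(2\pi v)^{-1/2}e^{-m^2/(2v)}\,\frac{\beta(m)^\alpha}{\Gamma(\alpha)}\,\tilde v^{-\alpha-1}e^{-\beta(m)/\tilde v}\,dm,\qquad \beta(m)=\tfrac12(b+m^2).
\]
By Tonelli, $\int k(V,V)\,dV=\bigl(\int_0^\infty k_1(v,v)\,dv\bigr)^n$. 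Since $k_1>0$, it suffices to show $\int_0^\infty k_1(v,v)\,dv=\infty$.

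\emph{Step 2: the one-dimensional diagonal.} On the diagonal the exponent equals $-m^2/v-b/(2v)$. Substituting $m=\sqrt v\,u$ cancels the $v^{\pm1/2}$ factors and gives
\[
k_1(v,v)=\frac{(2\pi)^{-1/2}}{\Gamma(\alpha)}\,v^{-\alpha-1}e^{-b/(2v)}\int_{\RR}e^{-u^2}\Bigl(\tfrac{b+vu^2}{2}\Bigr)^{\alpha}du .
\]
Because $\alpha>0$ and $b>0$, we have $(b+vu^2)^\alpha\ge v^\alpha|u|^{2\alpha}$, and $\int e^{-u^2}|u|^{2\alpha}\,du=\Gamma(\alpha+\tfrac12)<\infty$. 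This yields the lower bound
\[
k_1(v,v)\ \ge\ c\,v^{-1}e^{-b/(2v)},\qquad c>0 .
\]
For $v\ge 1$ we have $e^{-b/(2v)}\ge e^{-b/2}$, so $\int_1^\infty k_1(v,v)\,dv\ge c\,e^{-b/2}\int_1^\infty v^{-1}\,dv=\infty$. This establishes the claim.

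The only delicate point is making sure the powers of $v$ cancel exactly to $v^{-1}$. This is the borderline non-integrable rate, and it is insensitive to the value of $p$. The exact cancellation comes from the Gaussian factor contributing $v^{-1/2}$, the $dm$ substitution contributing $v^{1/2}$, and $\beta^\alpha$ contributing $v^\alpha$ against $v^{-\alpha-1}$. I would double-check this bookkeeping, including that the $\tilde v$ and $v$ exponents combine as written. Beyond that, the remaining work is simply invoking the trace-class criterion \eqref{eq:tc-diagonal} and the kernel equivalence.
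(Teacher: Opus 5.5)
Your proof is correct and is essentially the paper's argument. With $A=0$ the kernel factorizes over coordinates, and the one-dimensional diagonal integral $\iint \pi(V_i\mid M_i)\,\pi(M_i\mid V_i)\,dV_i\,dM_i$ of the Gaussian/Inverse-Gamma pair diverges logarithmically; the only difference is the order of integration, since the paper integrates out $V_i$ first and finds a $1/|M_i|$ tail, while you integrate out $M_i$ first and find a $v^{-1}$ tail.

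Your bookkeeping is also right, including the exponent $-m^2/v-b/(2v)$ on the diagonal. In fact the paper's displayed computation drops the factor $e^{-M_i^2/(2V_i)}$. That slip is harmless, because the corrected integrand is proportional to $\bigl(\tfrac{b+M_i^2}{2}\bigr)^{\frac12-p}\bigl(\tfrac b2+M_i^2\bigr)^{p-1}$ and still decays like $1/|M_i|$.
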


\begin{remark}
  The case $\mu=0$ satisfies the null-smallness assumption (Assumption~\ref{ass:null-smallness-A}). Lemma~\ref{lem:not-trace-class} thus demonstrates that this assumption alone is not sufficient to ensure the trace-class property. However, as we show below, it is sufficient to guarantee geometric ergodicity.
\end{remark}

Despite the failure of the trace-class property, we can still establish geometric ergodicity by verifying the drift and minorization conditions. We begin by introducing some technical lemmas (the proof of the first is deferred to Appendix~\ref{app:proofs}).

\begin{lemma}[Uniform boundedness of $\eta(V)$]
  \label{lem:eta-range-bound}
  Let $\bar m \in \mathrm{Range}(B^\top)$. There exists a constant $C < \infty$ such that
  \[
    \left\| \left(\rho B^\top B + D_V^{-1}\right)^{-1} \bar m \right\|_2 \le C,
  \]
  for all $V_1, \ldots, V_n \in (0,\infty)$, where $D_V = \mathrm{diag}(V_1,\ldots,V_n)$.
\end{lemma}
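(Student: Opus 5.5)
Since $\bar m\in\mathrm{Range}(B^\top)$, we may write $\bar m=B^\top u$ for some fixed $u\in\mathbb R^m$. Set $Q_V:=\rho B^\top B+D_V^{-1}\succ 0$ and $\eta(V):=Q_V^{-1}B^\top u$. The aim is an operator-norm bound on $Q_V^{-1}B^\top$ that is uniform in $V$. I would not try to bound $\|Q_V^{-1}\|$ itself, since it is unbounded as $V\to\infty$.

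The key step is to compute $\|Q_V^{-1}B^\top\|^2=\|BQ_V^{-2}B^\top\|$ and control it through a Loewner-order argument. Write $S:=D_V^{-1/2}\succ0$ and $P:=Q_V^{-1/2}$. The cleanest route avoids square roots. For any $w$, set $\eta=Q_V^{-1}B^\top w$, so that $\rho B^\top B\eta+D_V^{-1}\eta=B^\top w$. Taking the inner product with $\eta$ gives
\[
\rho\|B\eta\|^2+\eta^\top D_V^{-1}\eta=(B\eta)^\top w\le \|B\eta\|\,\|w\|.
\]
This yields $\|B\eta\|\le \|w\|/\rho$, and also $\eta^\top D_V^{-1}\eta\le \|w\|^2/\rho$. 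So the component $B\eta$ is uniformly bounded, but $\eta$ itself need not be: this energy identity controls only $B\eta$ and a $D_V^{-1}$-weighted norm of $\eta$.

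To bound $\eta$ itself, decompose $\eta=\eta_R+\eta_N$, with $\eta_R\in\mathrm{Range}(B^\top)=\Null(B)^\perp$ and $\eta_N\in\Null(B)$. Since $B$ restricted to $\Null(B)^\perp$ is injective, we have $\|\eta_R\|\le \|B\eta\|/s_{\min}^+(B)\le C\|w\|$, where $s_{\min}^+(B)$ is the smallest nonzero singular value of $B$. The remaining task is to bound $\eta_N$. Projecting the defining equation onto $\Null(B)$, using the orthogonal projector $\Pi_N$, gives $\Pi_N D_V^{-1}\eta=0$, because both $B^\top B\eta$ and $B^\top w$ lie in $\mathrm{Range}(B^\top)$. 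Hence $\eta_N$ solves the restricted system
\[
(U_N^\top D_V^{-1}U_N)\,c=-U_N^\top D_V^{-1}\eta_R,
\qquad \eta_N=U_Nc,
\]
for an orthonormal basis $U_N$ of $\Null(B)$. Therefore $\eta_N=-\,U_N(U_N^\top D_V^{-1}U_N)^{-1}U_N^\top D_V^{-1}\eta_R$. This is an oblique projection: it is the $D_V^{-1}$-orthogonal projection of $-\eta_R$ onto $\Null(B)$. It minimizes $\|\eta_R+y\|_{D_V^{-1}}$ over $y\in\Null(B)$.

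The main obstacle is that oblique projections of this type, $U_N(U_N^\top DU_N)^{-1}U_N^\top D$ with $D$ an arbitrary positive diagonal matrix, are nonetheless uniformly bounded in $D$. This is a known result (Stewart 1989 and O'Leary 1990 on weighted least squares): $\sup_{D\ \mathrm{diag}\succ0}\|U(U^\top DU)^{-1}U^\top D\|<\infty$, with a bound depending only on the subspace. I would either cite it or prove it via the representation of weighted least-squares solutions as convex combinations of basic solutions obtained from square nonsingular subsystems. By Cramer's rule there are only finitely many such subsystems, each with a $D$-independent bound, which gives the uniform constant. With this, $\|\eta_N\|\le C'\|\eta_R\|$, hence $\|\eta(V)\|\le (1+C')C\|u\|$ uniformly in $V\in(0,\infty)^n$, which proves the lemma.
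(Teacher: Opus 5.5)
Your proof is correct and takes a genuinely different route from the paper's.

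\textbf{The paper's route.} The paper works entrywise on the full $n\times n$ system. It writes $\bar m$ as a combination of columns of $B^\top B$ and applies Cramer's rule to each coordinate of $(B^\top B+D_V^{-1})^{-1}B_j$. It then expands numerator and denominator by Cauchy--Binet as polynomials in the $V_k^{-1}$; the denominator coefficients are nonnegative principal minors. A Cauchy--Schwarz inequality for minors shows that each numerator coefficient is at most a constant times the square root of the matching denominator coefficient. This makes the ratio uniformly bounded.

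\textbf{Your route.} You split the problem along $\Range(B^\top)\oplus\Null(B)$.
\begin{enumerate}
\item The energy identity $\rho\|B\eta\|^2+\eta^\top D_V^{-1}\eta=(B\eta)^\top w$ gives $\|B\eta\|\le\|w\|/\rho$. This controls $\eta_R$ through the smallest positive singular value of $B$.
\item Projecting the equation onto $\Null(B)$ gives $U_N^\top D_V^{-1}\eta=0$. So $\eta_N$ is minus the $D_V^{-1}$-weighted projection of $\eta_R$ onto $\Null(B)$, with $U_N^\top D_V^{-1}U_N\succ0$.
\item The Stewart--O'Leary bound on scaled projections, uniform over all positive diagonal weights, then controls $\eta_N$.
\end{enumerate}
These steps all check out. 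When $\Null(B)=\{0\}$ the second step is vacuous, and when $B=0$ one has $\bar m=0$.

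\textbf{What each buys.} Your argument isolates the only delicate direction: the null space, where $D_V^{-1}$ acts alone and can be arbitrarily ill-conditioned. It yields the explicit constant $(1+\chi_N)\|u\|/(\rho\,s^+_{\min}(B))$, where $\chi_N$ depends only on the subspace $\Null(B)$. It also fits the null-space geometry used elsewhere in the paper, e.g.\ Lemma~\ref{lem:nullproj-1-via-A}. The paper's argument is self-contained, needing only Cauchy--Binet and the minor inequality. However, it is combinatorially heavier and hides the constant in ratios of minors.

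If you prove the scaled-projection bound yourself via the convex-combination representation, you again use Cramer plus Cauchy--Binet. So the determinantal core is shared, but you apply it only to the $r\times r$ system on $\Null(B)$ rather than to the full system.

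Finally, the opening remarks about $\|BQ_V^{-2}B^\top\|$ and the unused notation $S$, $P$ can simply be dropped.
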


\begin{lemma}\label{lem:baseline-contraction}
  Let $\alpha>0$ and $\delta\in(0,\alpha)$. Then
  \begin{equation}\label{eq:baseline-contraction}
    \frac{\Gamma(\alpha+\frac{1}{2}-\delta)}{\Gamma(\alpha+\frac{1}{2})} \cdot
    \frac{\Gamma(\delta+\frac{1}{2})}{\sqrt{\pi}} < 1.
  \end{equation}
\end{lemma}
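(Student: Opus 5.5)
The plan is to reduce \eqref{eq:baseline-contraction} to a monotonicity statement in $\delta$ and then check it at the endpoints. Define
\[
  h(\delta):=\log\Gamma\!\left(\alpha+\tfrac12-\delta\right)+\log\Gamma\!\left(\delta+\tfrac12\right)-\log\Gamma\!\left(\alpha+\tfrac12\right)-\log\Gamma\!\left(\tfrac12\right),
\]
using $\Gamma(\tfrac12)=\sqrt\pi$. We must show that $h(\delta)<0$ for $\delta\in(0,\alpha)$.

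First check the endpoints. At $\delta=0$ we have $h(0)=\log\Gamma(\alpha+\tfrac12)+\log\Gamma(\tfrac12)-\log\Gamma(\alpha+\tfrac12)-\log\Gamma(\tfrac12)=0$. At $\delta=\alpha$ the same cancellation gives $h(\alpha)=0$, since the two Gamma factors simply swap roles. Next, $h$ is strictly convex on $[0,\alpha]$, because
\[
  h''(\delta)=\psi'\!\left(\alpha+\tfrac12-\delta\right)+\psi'\!\left(\delta+\tfrac12\right)>0,
\]
where $\psi'$ is the trigamma function, which is strictly positive on $(0,\infty)$. Both arguments lie in $[\tfrac12,\alpha+\tfrac12]\subset(0,\infty)$, so the expression is well defined. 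A strictly convex function that vanishes at both endpoints of an interval is strictly negative on its interior, so $h(\delta)<0$ for every $\delta\in(0,\alpha)$. Exponentiating gives \eqref{eq:baseline-contraction}.

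Equivalently, the statement is a strict log-convexity inequality for $\Gamma$. Writing $\delta+\tfrac12=t(\alpha+\tfrac12)+(1-t)\tfrac12$ with $t=\delta/\alpha$, the pair of arguments $(\alpha+\tfrac12-\delta,\ \delta+\tfrac12)$ is a majorization-smaller rearrangement of the pair $(\alpha+\tfrac12,\ \tfrac12)$, since both pairs have the same sum. Karamata's inequality applied to the strictly convex function $\log\Gamma$ then gives the strict inequality directly.

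The only step that needs care is strict positivity of the trigamma function, which follows from its series $\psi'(x)=\sum_{k\ge0}(x+k)^{-2}>0$. Beyond that there is no real obstacle; I expect the argument to be short.
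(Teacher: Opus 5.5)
Your proposal is correct and follows the paper's proof essentially line for line. Both define the same log-Gamma difference, note that it vanishes at $\delta=0$ and $\delta=\alpha$, show strict convexity via the trigamma function, and conclude; your Karamata/majorization remark is a valid restatement of the same log-convexity fact.
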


\begin{proof}
  Define
  \[
    f(\delta)
    :=\log\Gamma\!\left(\alpha+\tfrac12-\delta\right)
    +\log\Gamma\!\left(\delta+\tfrac12\right)
    -\log\Gamma\!\left(\alpha+\tfrac12\right)
    -\log\Gamma\!\left(\tfrac12\right).
  \]
  Then
    $f''(\delta)=\psi_1\!\left(\alpha+\tfrac12-\delta\right)+\psi_1\!\left(\delta+\tfrac12\right)>0$,
  so $f$ is strictly convex on $(0,\alpha)$.
  Since $f(0)=0$ and $f(\alpha)=0$, strict convexity implies $f(\delta)<0$
  for all $\delta\in(0,\alpha)$.
  Exponentiating yields \eqref{eq:baseline-contraction}.
\end{proof}

To establish the drift condition for the case $a=\mu=0$, we introduce the following notation. Recall that under the assumption $a=\mu=0$ (with $b>0$ and $p<0$), the distributions simplify to the Inverse-Gamma family as in \eqref{eq:gibbs-sampler-non-centered-a0-mu0}. Set $\alpha:=-p>0$ and define
  $\delta:=\min\left\{\nicefrac{\alpha}{2},\nicefrac12\right\}\in\left(0,\nicefrac12\right]$.
We use the Lyapunov function
\[
  G(V):=1+\sum_{i=1}^n\Big(V_i^{\delta}+V_i^{-1/4}\Big).
\]
Recall that $\rho:=\sigma^2\sigma_\epsilon^{-2}$, $\bar Q(V):=\rho B^\top B + D_V^{-1}$, and $\bar m:=\rho B^\top (Y-X\boldsymbol\beta)\in\mathrm{Range}(B^\top)$. The conditional distribution of $M$ given $(V,Y)$ is
\[
  M\mid(V,Y)\sim N\!\bigl(\eta(V),\Sigma(V)\bigr),
  \qquad
  \eta(V):=\bar Q(V)^{-1}\bar m,\quad \Sigma(V):=\sigma^2\bar Q(V)^{-1},
\]
and the conditional distribution of $\tilde V_i$ given $M_i=x$ is
\[
  \tilde V_i\mid (M_i=x)\sim \text{Inv-Gamma}\Bigl(\alpha+\tfrac12,\ \frac{b}{2}+\frac{x^2}{2\sigma^2}\Bigr).
\]
Define the constants
\[
  C_\delta:=\frac{\Gamma(\alpha+\frac12-\delta)}{\Gamma(\alpha+\frac12)},
  \qquad
  K_\delta := \frac{\Gamma(\delta+\frac12)}{\sqrt{\pi}},
  \qquad
  \gamma^{(\delta)}:=C_\delta\,K_\delta,
\]
and
\[
  \phi_{\mathrm{IG}}^{(\delta)}
  :=
  1+nC_\delta\Bigl(\frac{b}{2}\Bigr)^\delta
  +n\,\frac{\Gamma(\alpha+\frac34)}{\Gamma(\alpha+\frac12)}\,\Bigl(\frac{b}{2}\Bigr)^{-1/4}.
\]
Let $C_\eta$ denote the constant from Lemma~\ref{lem:eta-range-bound} such that $\sup_{V\in(0,\infty)^n}\|\eta(V)\|_2 \le C_\eta$, and set
\[
  \psi^{(\delta)}:=\frac{C_\delta}{(2\sigma^2)^\delta}\,n^{1-\delta} C_\eta^{2\delta}.
\]

\begin{lemma}[Drift condition for $a=\mu=0$ (Inv-Gamma)]
  \label{lem:drift-a0mu0}
  Under the assumptions and notation above, for all $V\in(0,\infty)^n$,
  \[
    \mathbb E\!\left[G(\tilde V)\mid V\right]
    \le
    \gamma^{(\delta)}\,G(V)+\bigl(\phi_{\mathrm{IG}}^{(\delta)}+\psi^{(\delta)}\bigr).
  \]
  Moreover, $\gamma^{(\delta)}<1$ by Lemma~\ref{lem:baseline-contraction}, since $\delta\in(0,\alpha)$ with $\alpha=-p$.
\end{lemma}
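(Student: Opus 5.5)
We condition in two stages: first on $M$ given $V$ (Gaussian), then on $\tilde V$ given $M$ (Inv-Gamma, independent across $i$). Write $\beta_i(x):=\tfrac b2+\tfrac{x^2}{2\sigma^2}$ and shape $\alpha+\tfrac12$. For $X\sim\text{Inv-Gamma}(\alpha+\tfrac12,\beta)$ we have the closed forms
\[
\mathbb E[X^{s}]=\frac{\Gamma(\alpha+\tfrac12-s)}{\Gamma(\alpha+\tfrac12)}\,\beta^{s},\qquad s<\alpha+\tfrac12,
\]
valid for both $s=\delta$ (note $\delta\le\alpha/2<\alpha+\tfrac12$) and $s=-\tfrac14$.

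\emph{Negative-power part.} For $s=-\tfrac14$, use $\beta_i(M_i)\ge b/2$, so $\beta_i^{-1/4}\le (b/2)^{-1/4}$ uniformly in $M$. This gives
\[
\mathbb E[\tilde V_i^{-1/4}\mid V]\le \frac{\Gamma(\alpha+\tfrac34)}{\Gamma(\alpha+\tfrac12)}\Bigl(\frac b2\Bigr)^{-1/4},
\]
a constant. Summed over $i$, this is the last term of $\phi_{\mathrm{IG}}^{(\delta)}$. The constant $1$ in $G$ contributes the $1$ in $\phi_{\mathrm{IG}}^{(\delta)}$.

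\emph{Positive-power part.} For $s=\delta\le\tfrac12$, subadditivity of $t\mapsto t^\delta$ gives
\[
\beta_i^\delta\le (b/2)^\delta+(2\sigma^2)^{-\delta}|M_i|^{2\delta}.
\]
The first piece yields $nC_\delta(b/2)^\delta$. For the second, write $M_i=\eta_i(V)+\Sigma_{ii}^{1/2}Z_i$ with $Z_i\sim N(0,1)$. Since $2\delta\le 1$, we have $|u+v|^{2\delta}\le |u|^{2\delta}+|v|^{2\delta}$, so
\[
\mathbb E|M_i|^{2\delta}\le |\eta_i|^{2\delta}+\Sigma_{ii}^{\delta}\,\mathbb E|Z|^{2\delta}.
\]
Here $\mathbb E|Z|^{2\delta}=2^\delta\Gamma(\delta+\tfrac12)/\sqrt\pi=2^\delta K_\delta$. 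The key matrix fact is $\bar Q\succeq D_V^{-1}$, hence $\Sigma_{ii}=\sigma^2[\bar Q^{-1}]_{ii}\le\sigma^2V_i$. Therefore
\[
\frac{C_\delta}{(2\sigma^2)^\delta}\,\Sigma_{ii}^\delta\,2^\delta K_\delta\le C_\delta K_\delta V_i^\delta=\gamma^{(\delta)}V_i^\delta .
\]
Summing over $i$ gives $\gamma^{(\delta)}\sum_i V_i^\delta\le\gamma^{(\delta)}G(V)$. For the mean term, since $\mu=0$ we have $\bar m\in\mathrm{Range}(B^\top)$, and Lemma~\ref{lem:eta-range-bound} gives $\|\eta(V)\|_2\le C_\eta$. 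By concavity (Hölder), $\sum_i|\eta_i|^{2\delta}\le n^{1-\delta}\|\eta\|_2^{2\delta}\le n^{1-\delta}C_\eta^{2\delta}$, which produces $\psi^{(\delta)}$.

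Combining the three parts yields the stated drift inequality. The bound $\gamma^{(\delta)}<1$ follows from Lemma~\ref{lem:baseline-contraction}, since $0<\delta<\alpha$. The delicate step is the variance term: the contraction constant must come out exactly as $C_\delta K_\delta$. This relies on two things: the sharp bound $[\bar Q^{-1}]_{ii}\le V_i$, and not inflating the Gaussian moment. The latter is why we split $|\eta_i+\Sigma_{ii}^{1/2}Z_i|^{2\delta}$ by subadditivity (valid because $2\delta\le1$) rather than using a convexity bound with a constant factor.
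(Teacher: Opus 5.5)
Your proposal is correct and follows the paper's proof essentially step for step. Both use the Inv-Gamma fractional-moment formula at $s=\delta$ and $s=-\tfrac14$, subadditivity of $t\mapsto t^\delta$ and of $|\cdot|^{2\delta}$ to separate the mean from the fluctuation, the bound $[\bar Q^{-1}]_{ii}\le V_i$ to obtain the exact coefficient $C_\delta K_\delta$, and Lemma~\ref{lem:eta-range-bound} with the power-mean inequality to produce $\psi^{(\delta)}$. The paper only spells out one step more explicitly: the tower-property argument that $\tilde V$ and $V$ are conditionally independent given $M$, which your two-stage conditioning uses implicitly.
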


\begin{proof}
  For the Inverse-Gamma distribution $\text{Inv-Gamma}(\alpha+\tfrac12, \tfrac{b}{2}+\tfrac{x^2}{2\sigma^2})$, the fractional moment formula holds for any $r < \alpha+\tfrac12$ (possibly negative):
  \[
    \mathbb E\big(\tilde V_i^{\,r}\mid M_i=x, Y\big)
    =
    \frac{\Gamma(\alpha+\frac12-r)}{\Gamma(\alpha+\frac12)}
    \Big(\frac{b}{2}+\frac{x^2}{2\sigma^2}\Big)^{r}.
  \]
  Setting $r=\delta$ and using the inequality $(u+v)^\delta\le u^\delta+v^\delta$ for $\delta\in(0,1]$ and $u,v\ge 0$, we obtain
  \begin{equation}\label{eq:IG-frac-moment}
    \mathbb E\big(\tilde V_i^{\,\delta}\mid M_i=x, Y\big)
    \le
    C_\delta\,\Bigl(\frac{b}{2}\Bigr)^\delta
    +
    C_\delta\,\frac{|x|^{2\delta}}{(2\sigma^2)^{\delta}}.
  \end{equation}
  For the negative moment, we apply the same formula with $r=-\tfrac14$:
  \begin{equation}\label{eq:IG-negquarter}
    \mathbb E\big(\tilde V_i^{-1/4}\mid M_i=x, Y\big)
    =
    \frac{\Gamma(\alpha+\frac34)}{\Gamma(\alpha+\frac12)}\,\Bigl(\frac{b}{2}+\frac{x^2}{2\sigma^2}\Bigr)^{-1/4}
    \le
    \frac{\Gamma(\alpha+\frac34)}{\Gamma(\alpha+\frac12)}\,\Bigl(\frac{b}{2}\Bigr)^{-1/4}.
  \end{equation}

  By the definition of the transition kernel, the conditional distribution of $\tilde V$ given $(M,V,Y)$ coincides with the conditional distribution of $\tilde V$ given $(M,Y)$. In other words, $\tilde V$ and $V$ are conditionally independent given $(M,Y)$. Therefore, for any measurable function $h$, we have that 
    $\mathbb E\!\big[h(\tilde V)\mid M,Y\big] = \mathbb E\!\big[h(\tilde V)\mid M,V,Y\big]$.
  By the tower property,
  \[
    \mathbb E\!\Big[\mathbb E\!\big[h(\tilde V)\mid M,Y\big]\Big|\,V\Big]
    =
    \mathbb E\!\Big[\mathbb E\!\big[h(\tilde V)\mid M,V,Y\big]\Big|\,V\Big]
    =
    \mathbb E\!\big[h(\tilde V)\mid V\big].
  \]
  Applying this identity with $h(\tilde V)=\sum_{i=1}^n \tilde V_i^{\delta}$ and $h(\tilde V)=\sum_{i=1}^n \tilde V_i^{-1/4}$, and summing the bounds \eqref{eq:IG-frac-moment} and \eqref{eq:IG-negquarter} over $i$, we obtain
  \begin{align}
    \mathbb E\!\left[\sum_{i=1}^n \tilde V_i^{\delta}\mid V\right]
     & \le
    nC_\delta\Bigl(\frac{b}{2}\Bigr)^\delta
    +\frac{C_\delta}{(2\sigma^2)^\delta}\,
    \mathbb E\!\left[\sum_{i=1}^n |M_i|^{2\delta}\mid V\right],
    \label{eq:sumVdelta-step} \\
    \mathbb E\!\left[\sum_{i=1}^n \tilde V_i^{-1/4}\mid V\right]
     & \le
    n\,\frac{\Gamma(\alpha+\frac34)}{\Gamma(\alpha+\frac12)}\,\Bigl(\frac{b}{2}\Bigr)^{-1/4}.
    \label{eq:sumVinverse-step}
  \end{align}

  Conditionally on $V$, write $M_i=\eta_i(V)+Z_i$ where $Z_i\sim N(0,\Sigma_{ii}(V))$.
  Since $\delta\le \tfrac12$ we have $2\delta\le 1$ and hence
  $|a+b|^{2\delta}\le |a|^{2\delta}+|b|^{2\delta}$. Therefore
  \[
    \mathbb E\big(|M_i|^{2\delta}\mid V\big)\le |\eta_i(V)|^{2\delta} + \mathbb E|Z_i|^{2\delta}.
  \]
  Let $m_{2\delta}:=\mathbb E|Z|^{2\delta}=2^\delta\Gamma(\delta+\tfrac12)/\sqrt{\pi}$ for $Z\sim N(0,1)$.
  Since $Z_i=\Sigma_{ii}(V)^{1/2}Z$, we have
  $\mathbb E|Z_i|^{2\delta}=m_{2\delta}\,\Sigma_{ii}(V)^{\delta}$.
  Summing over $i$ gives
  \begin{equation}\label{eq:sumMi-2delta}
    \mathbb E\!\left[\sum_{i=1}^n |M_i|^{2\delta}\mid V\right]
    \le
    \sum_{i=1}^n |\eta_i(V)|^{2\delta}
    +
    m_{2\delta}\sum_{i=1}^n \Sigma_{ii}(V)^{\delta}.
  \end{equation}

  We now handle the two terms on the right-hand side of \eqref{eq:sumMi-2delta} separately. First, consider the variance term. Since $\bar Q(V)=\rho B^\top B + D_V^{-1}\succeq D_V^{-1}$, we have
  $\bar Q(V)^{-1}\preceq D_V$.
  Thus $(\bar Q(V)^{-1})_{ii}\le V_i$ and hence
  \[
    \Sigma_{ii}(V)=\sigma^2(\bar Q(V)^{-1})_{ii}\le \sigma^2 V_i,
    \qquad
    \sum_{i=1}^n \Sigma_{ii}(V)^\delta \le \sigma^{2\delta}\sum_{i=1}^n V_i^\delta.
  \]
  Next, we bound the mean term. By Lemma~\ref{lem:eta-range-bound}, $\sup_V \|\eta(V)\|_2 \le C_\eta$. Hence
  \[
    \sum_{i=1}^n |\eta_i(V)|^{2\delta} = \|\eta(V)\|_{2\delta}^{2\delta}
    \le \bigl(n^{\frac{1}{2\delta}-\frac12}\|\eta(V)\|_2\bigr)^{2\delta}
    = n^{1-\delta}\|\eta(V)\|_2^{2\delta}
    \le n^{1-\delta}C_\eta^{2\delta}.
  \]
  Plugging these two bounds into \eqref{eq:sumMi-2delta} yields
  \[
    \mathbb E\!\left[\sum_{i=1}^n |M_i|^{2\delta}\mid V\right]
    \le
    n^{1-\delta}C_\eta^{2\delta}
    +
    m_{2\delta}\sigma^{2\delta}\sum_{i=1}^n V_i^\delta.
  \]
  Insert into \eqref{eq:sumVdelta-step}:
  \[
    \mathbb E\!\left[\sum_{i=1}^n \tilde V_i^{\delta}\mid V\right]
    \le
    nC_\delta\Bigl(\frac{b}{2}\Bigr)^\delta
    +
    \frac{C_\delta}{(2\sigma^2)^\delta}n^{1-\delta}C_\eta^{2\delta}
    +
    \frac{C_\delta}{(2\sigma^2)^\delta}m_{2\delta}\sigma^{2\delta}\sum_{i=1}^n V_i^\delta.
  \]
  Note that $m_{2\delta}/2^\delta = \Gamma(\delta+\tfrac12)/\sqrt{\pi}=K_\delta$, hence
  the last coefficient equals $C_\delta K_\delta=\gamma^{(\delta)}$. Therefore
  \[
    \mathbb E\!\left[\sum_{i=1}^n \tilde V_i^{\delta}\mid V\right]
    \le
    nC_\delta\Bigl(\frac{b}{2}\Bigr)^\delta
    +
    \psi^{(\delta)}
    +
    \gamma^{(\delta)}\sum_{i=1}^n V_i^\delta.
  \]

  Finally, we combine with \eqref{eq:sumVinverse-step} and use $\sum_i V_i^\delta\le G(V)$ to obtain
  \[
    \mathbb E[G(\tilde V)\mid V]\le \gamma^{(\delta)}\,G(V)+\bigl(\phi_{\mathrm{IG}}^{(\delta)}+\psi^{(\delta)}\bigr).
  \]
  By Lemma~\ref{lem:baseline-contraction}, $\gamma^{(\delta)}<1$.
\end{proof}

\subsubsection{Case 2}
For this case, where $\tilde a:=\mu^2/\sigma^2>0$, we require Assumption~\ref{ass:null-smallness-A}. To establish the drift condition, we need several technical lemmas. The first lemma provides a computational formula for the projection of the all-ones vector onto the null space of $B$, which is crucial for bounding the component of $\bar m$ in the null space.

\begin{lemma}
  \label{lem:nullproj-1-via-A}
  Assume $B=AK^{-1}$ with $K\in\mathbb R^{n\times n}$ invertible.
  Let $U_A\in\mathbb R^{n\times r}$ be an orthonormal basis for $\Null(A)$,
  where $r=\dim\Null(A)$. Define
    $G:=U_A^\top K^\top K\,U_A\in\mathbb R^{r\times r}$,
    and
    $z:=U_A^\top K^\top \mathbf 1\in\mathbb R^r$.
  Let $P_{\Null(B)}$ denote the Euclidean orthogonal projector onto $\Null(B)$.
  Then
  \begin{equation}\label{eq:nullproj-identity}
    \|P_{\Null(B)}\mathbf 1\|_2^2
    =\mathbf 1^\top P_{\Null(B)}\mathbf 1
    =z^\top G^{-1}z.
  \end{equation}
  In particular, if $\bar m=\rho B^\top(Y-X\beta+\mu Bh)+\mu\mathbf 1$ and $\bar m_0:=P_{\Null(B)}\bar m$, then we have $\bar m_0=\mu P_{\Null(B)}\mathbf 1$, and consequently
  \begin{equation}\label{eq:m0-norm-identity}
    \|\bar m_0\|_2=|\mu|\,\|P_{\Null(B)}\mathbf 1\|_2
    =|\mu|\sqrt{z^\top G^{-1}z}.
  \end{equation}
\end{lemma}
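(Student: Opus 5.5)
The argument has three steps: identify $\Null(B)$ through $\Null(A)$, express the orthogonal projector onto $\Null(B)$ using an explicit basis, and read off $\bar m_0$ from the decomposition $\RR^n=\Range(B^\top)\oplus\Null(B)$.

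\emph{Step 1: a basis of $\Null(B)$.} Since $B=AK^{-1}$ with $K$ invertible, $Bx=0$ holds if and only if $K^{-1}x\in\Null(A)$, that is, $x\in K\,\Null(A)$. Thus $\Null(B)=K\,\Null(A)$, and the columns of $N:=KU_A\in\RR^{n\times r}$ form a basis of $\Null(B)$. They are linearly independent because $K$ is invertible and $U_A$ has orthonormal columns. However, they are generally not orthonormal.

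\emph{Step 2: the projector.} For a full-column-rank $N$, the Euclidean orthogonal projector onto $\Range(N)$ is
\[
P_{\Null(B)}=N(N^\top N)^{-1}N^\top .
\]
Here $N^\top N=U_A^\top K^\top K U_A=G$, which is positive definite and hence invertible. It follows that
\[
\mathbf 1^\top P_{\Null(B)}\mathbf 1=(U_A^\top K^\top\mathbf 1)^\top G^{-1}(U_A^\top K^\top \mathbf 1)=z^\top G^{-1}z .
\]
Because $P_{\Null(B)}$ is symmetric and idempotent, $\|P_{\Null(B)}\mathbf 1\|_2^2=\mathbf 1^\top P^\top P\mathbf 1=\mathbf 1^\top P\mathbf 1$. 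This proves \eqref{eq:nullproj-identity}. If $r=0$, both sides are zero under the convention that an empty quadratic form vanishes.

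\emph{Step 3: the identity for $\bar m_0$.} We have $\Range(B^\top)=\Null(B)^\perp$, so $P_{\Null(B)}B^\top w=0$ for every $w$. Applying $P_{\Null(B)}$ to $\bar m=\rho B^\top(Y-X\beta+\mu Bh)+\mu\mathbf 1$ and using linearity therefore gives $\bar m_0=\mu P_{\Null(B)}\mathbf 1$. Taking norms and substituting \eqref{eq:nullproj-identity} yields \eqref{eq:m0-norm-identity}.

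None of these steps is a real obstacle. The only point requiring care is Step 2: one must use the non-orthonormal projector formula with the Gram matrix $G$, rather than $NN^\top$. One must also check that $G$ is invertible, which follows from injectivity of $KU_A$.
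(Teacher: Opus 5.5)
Your proof is correct and follows essentially the same route as the paper's: identify $\Null(B)=K\,\Null(A)=\Range(KU_A)$, write the projector as $KU_A\,G^{-1}U_A^\top K^\top$ with $G$ the Gram matrix, take the quadratic form with $\mathbf 1$, and remove the $B^\top$ term using $P_{\Null(B)}B^\top=0$. Your extra remarks make explicit three points the paper leaves implicit: idempotence and symmetry give the first equality, injectivity of $KU_A$ makes $G$ invertible, and the case $r=0$ needs a convention.
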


The second lemma establishes uniform bounds on negative moments of the GIG distribution, which are needed to control the behavior of $\tilde V_i$ when the parameters vary.

\begin{lemma}
\label{lem:GIG-negmoment-unif}
  Assume $p<0$, $\tilde a>0$, and $b>0$. Fix any $q\in(0,\tfrac12]$.
  Then there exists a constant $L_-(p,\tilde a,b,q)<\infty$ such that for all $M_i\in\mathbb R$,
    $\mathbb E\!\left[\tilde V_i^{-q}\mid M_i\right]\le L_-(p,\tilde a,b,q)$.
  Consequently, for all $V\in(0,\infty)^n$,
  \[
    \sum_{i=1}^n \mathbb E\!\left[\tilde V_i^{-q}\mid V\right]\le n\,L_-(p,\tilde a,b,q).
  \]
\end{lemma}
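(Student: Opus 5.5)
The plan is to reduce everything to a one-dimensional statement about the GIG family. Conditionally on $M_i$, the variable $\tilde V_i$ is $\mathrm{GIG}(\tilde p,\tilde a,\beta)$ with $\tilde p=p-\tfrac12<0$ and $\beta=b+M_i^2/\sigma^2\ge b>0$. The quantity $\tilde a>0$ is fixed. So it suffices to show that
\[
  \sup_{\beta\ge b}\mathbb E\bigl[X_\beta^{-q}\bigr]<\infty,\qquad X_\beta\sim\mathrm{GIG}(\tilde p,\tilde a,\beta).
\]
The second claim then follows immediately from the tower property, exactly as in the proof of Lemma~\ref{lem:drift-a0mu0}: $\tilde V$ and $V$ are conditionally independent given $(M,Y)$, so $\mathbb E[\tilde V_i^{-q}\mid V]=\mathbb E\bigl[\mathbb E[\tilde V_i^{-q}\mid M_i]\mid V\bigr]\le L_-$. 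Summing over $i$ gives the factor $n$.

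For the one-dimensional bound I will use the closed-form moment formula
\[
  \mathbb E[X^{r}]=\Bigl(\tfrac{\beta}{\tilde a}\Bigr)^{r/2}\frac{K_{\tilde p+r}(\sqrt{\tilde a\beta})}{K_{\tilde p}(\sqrt{\tilde a\beta})},
\]
valid for all real $r$ when $\tilde a,\beta>0$. With $r=-q$ and $t:=\sqrt{\tilde a\beta}\ge t_0:=\sqrt{\tilde ab}>0$, this becomes
\[
  \mathbb E[X^{-q}]=\tilde a^{\,q}\,t^{-q}\,\frac{K_{\tilde p-q}(t)}{K_{\tilde p}(t)}.
\]
This is a continuous positive function of $t$ on $[t_0,\infty)$, so it remains to control $t\to\infty$. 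There I will use the standard asymptotics $K_\nu(t)\sim\sqrt{\pi/(2t)}\,e^{-t}$, uniform in the sense that the ratio $K_{\nu_1}(t)/K_{\nu_2}(t)\to1$. This gives $\mathbb E[X^{-q}]\sim\tilde a^{\,q}t^{-q}\to0$. Hence the expression is bounded on $[t_0,\infty)$, and I define $L_-$ to be its supremum there, which depends only on $(p,\tilde a,b,q)$.

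Alternatively, to stay closer to the paper's tools, I can avoid asymptotics altogether. I would bound $K_{\tilde p-q}(t)$ above and $K_{\tilde p}(t)$ below by Lemma~\ref{lem:K_bound}. For the upper bound I would use a matching inequality $K_\nu(t)\le C t^{-|\nu|}(1+t)^{|\nu|-1/2}e^{-t}$, obtained from the integral representation $K_\nu(t)=\int_0^\infty e^{-t\cosh u}\cosh(\nu u)\,du$. The exponentials cancel, and the remaining power factors are bounded on $[t_0,\infty)$.

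The main obstacle is exactly this upper Bessel bound and its uniformity near $t_0$ and as $t\to\infty$; the rest of the argument is bookkeeping. The hypothesis $b>0$ is essential here: it keeps $t$ away from $0$, where $t^{-q}K_{\tilde p-q}/K_{\tilde p}$ would blow up like $t^{-q}t^{-|\tilde p-q|+|\tilde p|}$. Since $\tilde p<0$, we have $|\tilde p-q|=|\tilde p|+q$, so the singularity at $t=0$ is of order $t^{-2q}$. The restriction $q\le\tfrac12$ is not needed for this argument; it only matters for how the lemma is used later.
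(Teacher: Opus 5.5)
Your proposal is correct and follows essentially the same route as the paper: the GIG moment formula with $r=-q$, continuity of the Bessel ratio on $[\sqrt{\tilde a b},\infty)$, the large-argument asymptotics $K_{\nu_1}(t)/K_{\nu_2}(t)\to 1$, and then the tower property with summation over $i$. The only cosmetic difference is bookkeeping. The paper bounds the prefactor $(\chi/\tilde a)^{-q/2}$ by its value at $\chi=b$ and the Bessel ratio separately, whereas you keep them together as $\tilde a^{q}t^{-q}K_{\tilde p-q}(t)/K_{\tilde p}(t)$ and note that this tends to $0$. Your alternative route through two-sided Bessel bounds is not needed.
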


Finally, the third lemma provides a linear bound on the first moment of the GIG distribution that holds uniformly for all values of the shape parameter $p$. This is essential for bounding terms involving $\mathbb E[\tilde V_i\mid M_i]$ in the drift analysis.

\begin{lemma}
\label{lem:EV-linear-allp}
  Let $\tilde V_i\mid M_i\sim \mathrm{GIG}(p-\tfrac12,\tilde a,\ b+(M_i/\sigma)^2)$ with $\tilde a>0$ and $b\ge 0$.
  Then there exists a constant $C_{\mathrm{lin}}(p,\tilde a)<\infty$ such that for all $M_i\in\mathbb R$,
  \[
    \mathbb E[\tilde V_i\mid M_i]
    \le
    \frac{1}{\sqrt{\tilde a}}\Bigl|\frac{M_i}{\sigma}\Bigr|
    +\frac{\sqrt b}{\sqrt{\tilde a}}
    +\frac{C_{\mathrm{lin}}(p,\tilde a)}{\tilde a}.
  \]
\end{lemma}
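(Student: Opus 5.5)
The plan is to reduce the bound to a one-variable statement about ratios of modified Bessel functions. Write $\lambda:=p-\tfrac12$ and $\beta:=b+(M_i/\sigma)^2$, and set $\omega:=\sqrt{\tilde a\beta}$. For $\beta>0$ the standard GIG mean formula gives
\[
  \mathbb E[\tilde V_i\mid M_i]=\sqrt{\beta/\tilde a}\;R_\lambda(\omega),\qquad R_\lambda(\omega):=\frac{K_{\lambda+1}(\omega)}{K_\lambda(\omega)}.
\]
Since $\sqrt{\beta}\le |M_i|/\sigma+\sqrt b$, it suffices to prove
\[
  \sqrt{\beta/\tilde a}\,\bigl(R_\lambda(\omega)-1\bigr)\le C_{\mathrm{lin}}/\tilde a .
\]
Multiplying by $\tilde a$ and using $\sqrt{\beta/\tilde a}\,\tilde a=\omega$, this is equivalent to
\[
  h_\lambda(\omega):=\omega\bigl(R_\lambda(\omega)-1\bigr)\le C_{\mathrm{lin}}\qquad\text{for all }\omega>0.
\]
We then take $C_{\mathrm{lin}}:=\max\{\sup_{\omega>0}h_\lambda(\omega),0\}$. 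This constant depends only on $p$, because $\lambda=p-\tfrac12$, so it is also a valid (if slightly generous) choice of $C_{\mathrm{lin}}(p,\tilde a)$.

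The function $h_\lambda$ is continuous on $(0,\infty)$, since $K_\lambda>0$ there. Boundedness therefore follows once we know its behaviour at the two ends.

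\textbf{At $\omega\to\infty$.} We use the Hankel expansion $K_\nu(\omega)=\sqrt{\pi/(2\omega)}\,e^{-\omega}\bigl(1+\tfrac{4\nu^2-1}{8\omega}+O(\omega^{-2})\bigr)$. This gives $R_\lambda(\omega)=1+\tfrac{2\lambda+1}{2\omega}+O(\omega^{-2})$, hence $h_\lambda(\omega)\to\lambda+\tfrac12$.

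\textbf{At $\omega\to0$.} We use $K_\nu(\omega)\sim\tfrac12\Gamma(|\nu|)(\omega/2)^{-|\nu|}$ for $\nu\ne0$, and $K_0(\omega)\sim-\log\omega$. These give $\omega R_\lambda(\omega)\asymp\omega^{1+|\lambda|-|\lambda+1|}$. The exponent is nonnegative by the triangle inequality, so $\omega R_\lambda(\omega)$ stays bounded as $\omega\to0$. When the exponent vanishes, the limit is the finite constant $2\lambda$ for $\lambda>0$, and for $\lambda=0$ it is $\lim 1/(-\log\omega)=0$. In every case $h_\lambda$ is bounded near $0$.

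Combining the two ends with continuity gives $\sup_{\omega>0}h_\lambda(\omega)<\infty$.

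The degenerate case $\beta=0$ occurs only when $b=0$ and $M_i=0$. This is a Lebesgue-null event, so it is irrelevant for the drift computation in which the lemma is used. It can also be treated directly: in that case $\lambda>0$ is required for the conditional to be proper, which under $b=0$ amounts to $p>\tfrac12$. The conditional is then $\mathrm{Gamma}(\lambda,\tilde a/2)$ with mean $2\lambda/\tilde a$. This is $\le C_{\mathrm{lin}}/\tilde a$ because $2\lambda=\lim_{\omega\to0}h_\lambda(\omega)\le C_{\mathrm{lin}}$ in this case.

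The main obstacle is the small-$\omega$ analysis, which needs care across the sign cases of $\lambda$ and $\lambda+1$ (including $\lambda=0$ and $\lambda=-1$). To keep this clean, I will first apply the symmetry $K_{-\nu}=K_\nu$ and then compare the exponents $|\lambda|$ and $|\lambda+1|$ case by case. If the asymptotic expansions feel too informal, a fallback is the known inequality $R_\lambda(\omega)\le 1+\tfrac{2\lambda+1}{2\omega}+\tfrac{c}{\omega^2}$ for $\lambda\ge-\tfrac12$. The case $\lambda<-\tfrac12$ then reduces to it via the recurrence $K_{\lambda+1}=K_{\lambda-1}+\tfrac{2\lambda}{\omega}K_\lambda$ together with the symmetry, which gives $R_\lambda(\omega)=\tfrac{2\lambda}{\omega}+1/R_{-\lambda}(\omega)\le 1$.
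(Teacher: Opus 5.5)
Your proof is correct and follows the paper's reduction exactly. Both use the GIG mean formula $\mathbb E[\tilde V_i\mid M_i]=\sqrt{\chi/\psi}\,K_{\lambda+1}(x)/K_\lambda(x)$, then a bound $K_{\lambda+1}(x)/K_\lambda(x)\le 1+C/x$ valid for all $x>0$, and finally $\sqrt{\chi}\le\sqrt b+|M_i/\sigma|$. Your condition $h_\lambda(\omega)\le C_{\mathrm{lin}}$ is precisely that Bessel-ratio inequality.

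The real difference is how that inequality is obtained. The paper cites it from the literature (``Amos-type bounds'') with the explicit constant $C_R(\lambda)=2|\lambda|+1$. You prove it from scratch: $h_\lambda$ is continuous on $(0,\infty)$, tends to $\lambda+\tfrac12$ at infinity by the Hankel expansion, and at $0$ tends to $2\lambda$ for $\lambda>0$ and to $0$ otherwise. Your endpoint computations check out, including the logarithmic cases $\lambda=0$ and $\lambda=-1$. Your route is self-contained where the paper leans on an imprecise citation. The cost is that your constant is existential rather than explicit; this would matter only if one wanted explicit constants in the resulting drift inequality. You also handle the degenerate case $b=0$, $M_i=0$, which the paper ignores.

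One correction to your optional fallback. The recurrence $K_{\lambda+1}=K_{\lambda-1}+\tfrac{2\lambda}{\omega}K_\lambda$, combined with symmetry, gives $R_\lambda=\tfrac{2\lambda}{\omega}+R_{-\lambda}$, not $\tfrac{2\lambda}{\omega}+1/R_{-\lambda}$. A check at $\lambda=-\tfrac12$ confirms this: there $R_{-1/2}=1$ and $R_{1/2}=1+1/\omega$.

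The clean argument for $\lambda\le-\tfrac12$ is direct. Write $R_\lambda(\omega)=K_{|\lambda+1|}(\omega)/K_{|\lambda|}(\omega)$; then $R_\lambda\le 1$ because $|\lambda+1|\le|\lambda|$ and $\nu\mapsto K_\nu(\omega)$ is increasing on $[0,\infty)$. Hence $h_\lambda\le 0$ in that range.

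Also, a ratio bound containing a $c/\omega^2$ term cannot control $h_\lambda$ near $\omega=0$. The fallback could therefore only replace your large-$\omega$ analysis. None of this affects your main argument.
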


The proofs of Lemmas~\ref{lem:nullproj-1-via-A}, \ref{lem:GIG-negmoment-unif}, and~\ref{lem:EV-linear-allp} are given in Appendix~\ref{app:proofs}. With these auxiliary lemmas in hand, we are now ready to establish the main drift condition for Case~2.

\begin{lemma}[Drift condition for $a=0, \mu\neq 0$]
  \label{lem:drift-a0-mune0}
  Assume $a=0$, $\mu\neq 0$, $b>0$, and $p<0$. Suppose Assumption~\ref{ass:null-smallness-A} holds.
  Fix any $q\in(0,\tfrac12]$ and define the Lyapunov function
  \[
    G_q(V):=1+\sum_{i=1}^n\Big(V_i+V_i^{-q}\Big).
  \]
  Then there exist $\gamma\in(0,1)$ and $L<\infty$ such that for all $V\in(0,\infty)^n$,
  \[
    \mathbb E\!\left[G_q(\tilde V)\mid V\right]\ \le\ \gamma\,G_q(V)+L.
  \]
  More precisely, for any $\varepsilon>0$ satisfying
    $\gamma_+ := \sqrt{z^\top G^{-1} z} + \varepsilon < 1$,
  one may take $\gamma=\gamma_+$.
\end{lemma}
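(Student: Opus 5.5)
The plan is to bound the two pieces of $G_q$ separately: the negative-power part is handled by a uniform bound, and the linear part is reduced, through Lemma~\ref{lem:EV-linear-allp}, to an estimate on $\mathbb E[\sum_i|M_i|\mid V]$. As in the proof of Lemma~\ref{lem:drift-a0mu0}, conditional independence of $\tilde V$ and $V$ given $(M,Y)$ lets me apply the tower property componentwise.

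\textbf{Negative-power part.} Lemma~\ref{lem:GIG-negmoment-unif} gives directly
\[
\mathbb E\bigl[\textstyle\sum_i\tilde V_i^{-q}\mid V\bigr]\le nL_-(p,\tilde a,b,q).
\]
This piece therefore only contributes to the constant $L$.

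\textbf{Linear part: reduction to $|M_i|$.} Here $\tilde a=\mu^2/\sigma^2$, so $1/(\sigma\sqrt{\tilde a})=1/|\mu|$. Lemma~\ref{lem:EV-linear-allp} then yields
\[
\mathbb E\bigl[\textstyle\sum_i\tilde V_i\mid V\bigr]\le \tfrac{1}{|\mu|}\,\mathbb E\bigl[\textstyle\sum_i|M_i|\mid V\bigr]+C .
\]
The goal is therefore to show that $\mathbb E[\|M\|_1\mid V]\le |\mu|\bigl(\sqrt{z^\top G^{-1}z}+\varepsilon\bigr)\sum_iV_i+C_\varepsilon$.

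\textbf{Decomposing $M$.} Write $M=\eta(V)+Z$ with $\eta(V)=\bar Q^{-1}\bar m$ and $Z\sim N(0,\sigma^2\bar Q^{-1})$. Split $\bar m=\bar m_R+\bar m_0$, where $\bar m_R\in\Range(B^\top)$ and $\bar m_0=P_{\Null(B)}\bar m$.
\begin{itemize}
\item By Lemma~\ref{lem:eta-range-bound}, $\|\bar Q^{-1}\bar m_R\|_2$ is bounded uniformly in $V$, so $\|\bar Q^{-1}\bar m_R\|_1\le\sqrt n\,C$.
\item For the fluctuation, $\mathbb E|Z_i|\le\sigma\sqrt{2/\pi}\,(\bar Q^{-1})_{ii}^{1/2}\le\sigma\sqrt{2V_i/\pi}$. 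Using $\sqrt{V_i}\le \varepsilon'V_i+1/(4\varepsilon')$, this is at most $\varepsilon|\mu|V_i+C_\varepsilon$.
\end{itemize}

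\textbf{Null-space term (main obstacle).} The key step is to show
\[
\|\bar Q^{-1}\bar m_0\|_1\le\|\bar m_0\|_2\sum_iV_i ,
\]
which is sharp enough to let the null-smallness constant appear exactly.
\begin{itemize}
\item Write $\|\bar Q^{-1}\bar m_0\|_1=\max_{\phi\in\{\pm1\}^n}\phi^\top\bar Q^{-1}\bar m_0$. Cauchy--Schwarz in the $\bar Q^{-1}$ inner product bounds this by $\sqrt{\phi^\top\bar Q^{-1}\phi}\,\sqrt{\bar m_0^\top\bar Q^{-1}\bar m_0}$.
\item From $\bar Q^{-1}\preceq D_V$ we get $\phi^\top\bar Q^{-1}\phi\le\sum_iV_i$, and likewise $\bar m_0^\top\bar Q^{-1}\bar m_0\le \bar m_0^\top D_V\bar m_0\le\max_iV_i\,\|\bar m_0\|_2^2$.
\item Multiplying, and using $\sqrt{\sum_iV_i\cdot\max_iV_i}\le\sum_iV_i$, gives the claim.
\end{itemize}
If this crude route loses a constant, the fallback is to use the variational form $v^\top\bar Q^{-1}v=\sup_w(2v^\top w-w^\top\bar Qw)$ together with $Bv=0$. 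By Lemma~\ref{lem:nullproj-1-via-A}, $\|\bar m_0\|_2=|\mu|\sqrt{z^\top G^{-1}z}$.

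\textbf{Combining.} Altogether,
\[
\mathbb E\bigl[\textstyle\sum_i\tilde V_i\mid V\bigr]\le\bigl(\sqrt{z^\top G^{-1}z}+\varepsilon\bigr)\textstyle\sum_iV_i+C .
\]
Adding the negative-power bound and using $\sum_iV_i\le G_q(V)$ yields the drift inequality with $\gamma=\gamma_+<1$, which is possible by Assumption~\ref{ass:null-smallness-A} (with $a=0$). The constant $L$ collects $nL_-$, the range-part bound, $C_\varepsilon$, the constants of Lemma~\ref{lem:EV-linear-allp}, and the leading $1$.
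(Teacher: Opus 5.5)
Your proposal is correct and follows the paper's proof essentially step for step. It uses the same split into the negative-moment part (Lemma~\ref{lem:GIG-negmoment-unif}) and the linear part (via Lemma~\ref{lem:EV-linear-allp}), and the same duality/Cauchy--Schwarz bound $\|\bar Q^{-1}\bar m_0\|_1\le\|\bar m_0\|_2\sum_iV_i$ combined with Lemma~\ref{lem:nullproj-1-via-A}. The only deviation is the range component: you invoke Lemma~\ref{lem:eta-range-bound} for a $V$-uniform bound, whereas the paper uses $\bar Q(V)^{-1}\preceq\rho^{-1}(B^\top B)^\dagger$ on $\Range(B^\top)$ to get $c_\parallel\sqrt{\sum_iV_i}$ and absorbs this with Young's inequality. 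Both work, and your variational fallback is not needed.
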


\begin{proof}
  Let $\eta(V):=\bar Q(V)^{-1}\bar m$ and recall $\bar Q(V)=\rho B^\top B+D_V^{-1}$.
  By linearity of conditional expectation,
  \[
    \mathbb E\!\left[G_q(\tilde V)\mid V\right]
    = 1 + \sum_{i=1}^n\mathbb E[\tilde V_i\mid V] + \sum_{i=1}^n\mathbb E[\tilde V_i^{-q}\mid V].
  \]
  By Lemma~\ref{lem:GIG-negmoment-unif}, the negative moments are uniformly bounded:
  \[
    \sum_{i=1}^n \mathbb E[\tilde V_i^{-q}\mid V]\le n\,L_-(p,\tilde a,b,q)=:L_-.
  \]
  By Lemma~\ref{lem:EV-linear-allp},
  \[
    \sum_{i=1}^n\mathbb E[\tilde V_i\mid V]
    \le
    \frac{1}{\sqrt{\tilde a}}\,
    \mathbb E\Bigl[\Bigl\|\frac{M}{\sigma}\Bigr\|_1\Bigm|V\Bigr]
    +n\Bigl(\frac{\sqrt b}{\sqrt{\tilde a}} + \frac{C_{\mathrm{lin}}(p,\tilde a)}{\tilde a}\Bigr).
  \]
  Recall that $M/\sigma\mid V\sim N(\eta(V)/\sigma,\ \bar Q(V)^{-1})$. By using this conditional distribution together with the triangle inequality, we obtain
  \[
    \mathbb E\left[\Bigl\|\frac{M}{\sigma}\Bigr\|_1\Bigm|V\right]
    \le
    \Bigl\|\frac{\eta(V)}{\sigma}\Bigr\|_1+\mathbb E\|Z\|_1,
    \qquad Z\sim N(0,\bar Q(V)^{-1}).
  \]
  Now, decompose $\bar m=\bar m_0+\bar m_\parallel$ with $\bar m_0:=P_{\Null(B)}\bar m$ and $\bar m_\parallel\in\Range(B^\top)$.
  Thus $\eta(V)=\bar Q(V)^{-1}\bar m_0+\bar Q(V)^{-1}\bar m_\parallel$.
  For the null component, apply Lemma~\ref{lem:l1-duality-nosqrtn} and $\bar Q(V)^{-1}\preceq D_V$ to obtain
  \[
    \|\bar Q(V)^{-1}\bar m_0\|_1
    \le \sqrt{\sum_{i=1}^n V_i}\,\sqrt{\bar m_0^\top \bar Q(V)^{-1}\bar m_0}
    \le \sqrt{\sum_{i=1}^n V_i}\,\sqrt{\bar m_0^\top D_V \bar m_0}
    \le \|\bar m_0\|_2\sum_{i=1}^n V_i.
  \]
  We now apply Lemma~\ref{lem:nullproj-1-via-A}. Since  $\bar m=\rho B^\top(Y-X\beta+\mu Bh)+\mu\mathbf 1$, its projection onto the null space is purely determined by $\mu$:
    $\bar m_0 = \mu P_{\Null(B)}\mathbf 1$.
  By Lemma~\ref{lem:nullproj-1-via-A} (Eq.~\eqref{eq:m0-norm-identity}), we have
    $\|\bar m_0\|_2 = |\mu| \sqrt{z^\top G^{-1} z}$.
  Substituting this into the bound for the null component, the coefficient of $\sum V_i$ becomes:
  \[
    \frac{1}{\sigma\sqrt{\tilde a}}\|\bar m_0\|_2
    = \frac{1}{\sigma (\mu/\sigma)} |\mu| \sqrt{z^\top G^{-1} z}
    = \sqrt{z^\top G^{-1} z}.
  \]
  For the range component, as established in Lemma~\ref{lem:drift-nullsmall} (Step 2(A)), there exists $c_\parallel<\infty$ such that
  \[
    \frac{1}{\sqrt{\tilde a}}\Bigl\|\frac{\bar Q(V)^{-1}\bar m_\parallel}{\sigma}\Bigr\|_1
    \le c_\parallel \sqrt{\sum V_i}
    \le \frac{\varepsilon}{2}\sum V_i+\frac{c_\parallel^2}{2\varepsilon}.
  \]
  Similarly, the fluctuation term satisfies (with $c_0 := \sqrt{2/\pi}/\sqrt{\tilde a}$):
  \[
    \frac{1}{\sqrt{\tilde a}}\mathbb E\|Z\|_1
    \le c_0 \sum_{i=1}^n \sqrt{V_i}
    \le \frac{\varepsilon}{2}\sum_{i=1}^n V_i + \frac{n c_0^2}{2\varepsilon}.
  \]
  Combining these bounds, there exists $L_+<\infty$ such that
  \[
    \sum_{i=1}^n\mathbb E[\tilde V_i\mid V]
    \le
    \Bigl(\sqrt{z^\top G^{-1} z}+\varepsilon\Bigr)\sum_{i=1}^n V_i + L_+.
  \]
  By Assumption~\ref{ass:null-smallness-A} (in the case $a=0$), we have $\sqrt{z^\top G^{-1} z} < 1$. Thus, we can choose $\varepsilon>0$ sufficiently small so that
    $\gamma_+:=\sqrt{z^\top G^{-1} z}+\varepsilon<1$.
  Let $\gamma:=\gamma_+$ and $L:=1+L_-+L_+$.
  Then for all $V\in(0,\infty)^n$,
  \[
    \mathbb E\!\left[G_q(\tilde V)\mid V\right]
    \le
    \gamma \sum_{i=1}^n V_i + L
    \le
    \gamma\,G_q(V)+L,
  \]
  which proves the drift condition.
\end{proof}

\subsubsection{Case 3: \(a>0, b=0, p>0\)}

For this case, the proof requires several technical lemmas. The first lemma provides a lower bound on diagonal elements of matrix inverses and an upper bound for negative moments of Gaussian random variables.

\begin{lemma}\label{lem:two-bounds-nosqrtn}
  Let $A\succ0$ be symmetric. For each $i$,
    $(A^{-1})_{ii}\ \ge\ \nicefrac{1}{A_{ii}}$.
  Moreover, for any $0<\delta<1$ and $Z\sim N(\eta,s^2)$,
  \[
    \mathbb E|Z|^{-\delta}\ \le\ \kappa(\delta)\,s^{-\delta},
    \qquad
    \kappa(\delta):=\mathbb E|N(0,1)|^{-\delta}
    =\frac{1}{\sqrt{\pi}}\,2^{-\delta/2}\Gamma\!\Big(\frac{1-\delta}{2}\Big).
  \]
\end{lemma}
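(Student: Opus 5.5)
The statement has two independent parts, and I will prove them separately.

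\emph{First part: $(A^{-1})_{ii}\ge 1/A_{ii}$.} I will use Cauchy--Schwarz in the inner product induced by $A$. Let $e_i$ be the $i$th standard basis vector. Then
\[
1=(e_i^\top e_i)^2=\bigl((A^{1/2}e_i)^\top (A^{-1/2}e_i)\bigr)^2\le (e_i^\top A e_i)(e_i^\top A^{-1}e_i)=A_{ii}(A^{-1})_{ii}.
\]
Since $A\succ0$ gives $A_{ii}>0$, dividing yields the claim. An alternative route is the Schur complement formula $(A^{-1})_{ii}=(A_{ii}-a_i^\top A_{-i,-i}^{-1}a_i)^{-1}$, where the subtracted term is nonnegative. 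I will use the Cauchy--Schwarz version because it is shorter.

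\emph{Second part: the negative moment bound.} First I will reduce to $s=1$ by scaling. Write $Z=s(\eta/s+N)$ with $N\sim N(0,1)$, so that
\[
\mathbb E|Z|^{-\delta}=s^{-\delta}\,\mathbb E|c+N|^{-\delta},\qquad c=\eta/s.
\]
It then suffices to show $\mathbb E|c+N|^{-\delta}\le \mathbb E|N|^{-\delta}$ for every real $c$. This is the step needing a genuine argument, and I plan an Anderson-type rearrangement. The function $x\mapsto|x|^{-\delta}$ is symmetric and decreasing in $|x|$, and the Gaussian density $\varphi$ is symmetric and unimodal. Using the layer-cake representation,
\[
\mathbb E|c+N|^{-\delta}=\int_0^\infty P\bigl(|c+N|^{-\delta}>t\bigr)\,dt=\int_0^\infty P\bigl(N\in(-c-r_t,\,-c+r_t)\bigr)\,dt,\qquad r_t=t^{-1/\delta}.
\]
For a fixed length $2r_t$, the Gaussian mass of an interval is maximized when the interval is centered at the origin, by symmetry and unimodality of $\varphi$. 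Hence each integrand is at most $P(|N|<r_t)$, and integrating over $t$ gives the reduction. Finiteness is immediate because $\delta<1$ makes $|x|^{-\delta}$ locally integrable.

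Finally I will evaluate $\kappa(\delta)$. Using symmetry and the substitution $u=x^2/2$,
\[
\kappa(\delta)=\frac{2}{\sqrt{2\pi}}\int_0^\infty x^{-\delta}e^{-x^2/2}\,dx=\frac{2}{\sqrt{2\pi}}\,2^{(1-\delta)/2-1}\,\Gamma\!\Big(\frac{1-\delta}{2}\Big).
\]
This simplifies to $\pi^{-1/2}2^{-\delta/2}\Gamma(\frac{1-\delta}{2})$, which is the stated constant.
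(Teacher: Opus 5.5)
Your proof is correct and follows essentially the paper's route. The Cauchy--Schwarz step is the ``variational representation'' the paper cites, with the Schur complement as its alternative, and your layer-cake comparison is the one-dimensional case of the Anderson inequality the paper invokes, written out in full. Your explicit evaluation of $\kappa(\delta)$ is a computation the paper states without derivation.
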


\begin{proof}
  The first inequality follows from the Schur complement formula or, equivalently, from the variational representation of the diagonal elements.
  For the Gaussian bound, note that $x \mapsto |x|^{-\delta}$ is symmetric and decreasing in $|x|$. By Anderson's inequality,
    $\mathbb E|N(\eta,s^2)|^{-\delta}\le \mathbb E|N(0,s^2)|^{-\delta}=s^{-\delta}\kappa(\delta)$.
\end{proof}

The second lemma establishes a key bound for negative fractional moments of the GIG distribution when $p>0$. The bound involves a specific exponent $\delta(p)$ chosen to ensure that the drift coefficient is contractive.

\begin{lemma}\label{lem:E1V-delta-app}
  Given $\sigma>0$ and
    $V_i \mid M_i \sim \mathrm{GIG}\!\left(p-\nicefrac12,\ \tilde a,\ \nicefrac{M_i^2}{\sigma^2}\right)$,
  where $\tilde a = a+\mu^2/\sigma^2 > 0$ and $p>0$. Define
  \[
    \delta(p) :=
    \begin{cases}
      p                      & \text{if } 0 < p \le \tfrac12, \\
      \min\{\tfrac12, 2p-1\} & \text{if } p > \tfrac12.
    \end{cases}
  \]
  Then $0<\delta(p)\le \frac12$. There exist constants
  $C_1(p)>0$ (depending only on $p$) and $C_2(p,\tilde a)\in\mathbb R$ such that
  \[
    \mathbb E\!\left[V_i^{-\frac{\delta(p)}{2}}\mid M_i\right]
    \le C_1(p)\,\Bigl|\frac{M_i}{\sigma}\Bigr|^{-\delta(p)}+C_2(p,\tilde a).
  \]
  Moreover, with $\kappa(\cdot)$ as in Lemma~\ref{lem:two-bounds-nosqrtn}, we have
    $C_1(p)\,\kappa\bigl(\delta(p)\bigr) < 1$ for all $p>0$.
\end{lemma}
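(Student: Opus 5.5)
The plan is to compute the negative fractional moment exactly through the GIG moment formula, analyse the resulting Bessel ratio near the origin and at infinity, and then pick $C_1(p)$ with a little slack so that the contraction inequality $C_1\kappa<1$ survives. Write $\nu:=p-\tfrac12$, $t:=M_i^2/\sigma^2$, $s:=\delta(p)/2\in(0,\tfrac14]$, and $x:=\sqrt{\tilde a t}$. For $t>0$ the standard formula $\mathbb E[V^r]=(b/a)^{r/2}K_{\lambda+r}(\sqrt{ab})/K_\lambda(\sqrt{ab})$ for $\mathrm{GIG}(\lambda,a,b)$, taken with $r=-s$, gives
\[
  \mathbb E\!\left[V_i^{-s}\mid M_i\right]=\tilde a^{s/2}\,t^{-s/2}\,R(x),\qquad R(x):=\frac{K_{\nu-s}(x)}{K_\nu(x)} .
\]
The set $\{M_i=0\}$ is null, so it can be ignored. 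The function $R$ is continuous and positive on $(0,\infty)$, and $R(x)\to1$ as $x\to\infty$ because $K_\lambda(x)\sim\sqrt{\pi/(2x)}e^{-x}$ for every order. Hence on $\{x\ge x_0\}$, that is $|M_i/\sigma|\ge x_0/\sqrt{\tilde a}$, the moment is bounded by a constant depending on $(p,\tilde a,x_0)$. This constant will go into $C_2(p,\tilde a)$. All the work is therefore in the regime $x\to0$, where I use $K_\lambda(x)\sim\Gamma(|\lambda|)2^{|\lambda|-1}x^{-|\lambda|}$ for $\lambda\neq0$ and $K_0(x)\sim-\log x$.

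\emph{Case $p>\tfrac12$.} Here $\nu>0$ and $s=\delta/2\le(2p-1)/2=\nu$, so $|\nu-s|\le\nu$. The small-$x$ asymptotics give $R(x)=O(x^{s}\,|\log x|)$ in the boundary case $s=\nu$, and $R(x)=O(x^{s})$ otherwise. In both cases $t^{-s/2}R(x)$ stays bounded as $x\to0$. The whole moment is then bounded uniformly in $M_i$, and the claim holds with $C_2$ equal to that bound and any $C_1(p)>0$. I choose $C_1(p)$ small, for instance $C_1=1/(2\kappa(\delta(p)))$, which gives $C_1\kappa<1$ trivially.

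\emph{Case $0<p<\tfrac12$.} Now $\nu<0$, $\delta=p$, $s=p/2$, and $|\nu-s|=\tfrac12-\tfrac p2>|\nu|=\tfrac12-p$. The asymptotics give
\[
  R(x)\sim\frac{\Gamma(\tfrac{1-p}{2})}{\Gamma(\tfrac12-p)}\,2^{p/2}\,x^{-p/2},
  \qquad\text{so}\qquad
  \tilde a^{s/2}t^{-s/2}R(x)\sim c_p\,|M_i/\sigma|^{-p}
\]
with $c_p:=2^{p/2}\Gamma(\tfrac{1-p}{2})/\Gamma(\tfrac12-p)$, independent of $\tilde a$. This is exactly the exponent $-\delta(p)$ in the statement. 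The key check is then
\[
  c_p\,\kappa(p)=\frac{\Gamma(\tfrac{1-p}{2})^2}{\Gamma(\tfrac12-p)\,\Gamma(\tfrac12)}<1 .
\]
This follows from strict log-convexity of $\Gamma$, since $\tfrac{1-p}{2}$ is the midpoint of $\tfrac12-p$ and $\tfrac12$ (the same mechanism as in Lemma~\ref{lem:baseline-contraction}). Because the inequality is strict, I set $C_1(p):=(1+\epsilon_p)c_p$ with $\epsilon_p>0$ small enough that $C_1\kappa<1$ still holds. By the asymptotics there is an $x_0=x_0(p)$ such that the moment is at most $C_1|M_i/\sigma|^{-p}$ whenever $x\le x_0$. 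The complementary region is handled by the large-$x$ bound above.

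\emph{Case $p=\tfrac12$.} Here $\nu=0$, so $K_\nu=K_0\sim-\log x$ in the denominator. This makes $t^{-s/2}R(x)$ of order $|M_i/\sigma|^{-1/2}/|\log x|$, which is $o(|M_i/\sigma|^{-1/2})=o(|M_i/\sigma|^{-\delta})$. Any $C_1>0$ therefore works near $0$, so I take $C_1$ small as in the case $p>\tfrac12$. Finally, $0<\delta(p)\le\tfrac12$ is immediate from the definition.

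The main obstacle I expect is the passage from asymptotics to a global inequality. The slack $\epsilon_p$ and the threshold $x_0$ must be chosen so that $C_1$ depends only on $p$, while the $\tilde a$-dependence is pushed entirely into $C_2$. The cutoff $x\le x_0$ corresponds to $|M_i/\sigma|\le x_0/\sqrt{\tilde a}$, and on the complement the bounded Bessel ratio times $t^{-s/2}$ is bounded in terms of $\tilde a$ alone.
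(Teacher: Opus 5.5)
Your treatment of $0<p<\tfrac12$ and $p=\tfrac12$ is correct and follows the paper's route. Like the paper, you write the exact GIG moment as the Bessel ratio $R$, use small-$x$ asymptotics to get the sharp constant, and use boundedness of $R$ for large $x$. Strict log-convexity of $\Gamma$ is a cleaner substitute for Gurland's inequality. At $p=\tfrac12$, using $K_0(x)\sim-\log x$ lets you take any $C_1>0$, where the paper uses the explicit Pal--Khare constant.

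The case $p>\tfrac12$ contains a false step. Your ``boundary case'' $s=\nu$ is not exceptional. Since $\delta(p)=2p-1$ for $p\le\tfrac34$, it covers the whole range $p\in(\tfrac12,\tfrac34]$. There $R(x)=K_0(x)/K_\nu(x)\sim x^{\nu}|\log x|/(\Gamma(\nu)2^{\nu-1})$. Because $\tilde a^{s/2}t^{-s/2}=\tilde a^{s}x^{-s}$, the moment behaves like $\tilde a^{s}|\log x|/(\Gamma(\nu)2^{\nu-1})$, which tends to $\infty$ as $M_i\to0$. This matches the fact that the limiting $\mathrm{Gamma}(\nu,\tilde a/2)$ law has no moment of order $-\nu$. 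So the moment is \emph{not} bounded uniformly in $M_i$, and there is no uniform bound to take as $C_2$.

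The conclusion still holds, and your own $p=\tfrac12$ argument repairs it. The target inequality $\tilde a^{s}x^{-s}R(x)\le C_1|M_i/\sigma|^{-\delta}=C_1\tilde a^{s}x^{-2s}$ is equivalent to $x^{s}R(x)\le C_1$, in which $\tilde a$ cancels. Here $x^{s}R(x)=O(x^{2s}|\log x|)\to0$, so any $C_1>0$ works for $x\le x_0(p)$. The region $x>x_0$ goes into $C_2(p,\tilde a)$ as before. This reduction also settles, in every case, your concern about keeping $C_1$ free of $\tilde a$.

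The paper avoids asymptotics for $p>\tfrac12$ altogether. Since $0\le\nu-s\le\nu$ and $\nu\mapsto K_\nu(x)$ is increasing on $[0,\infty)$, it gets $R\le1$ for all $x$. Hence the moment is at most $\tilde a^{\delta/4}|M_i/\sigma|^{-\delta/2}$. Young's inequality with weight $\kappa(\delta)$ then gives $C_1=1/(2\kappa(\delta))$ directly.
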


The third lemma provides an $\ell_1$-duality bound for controlling the norm of $\bar Q(V)^{-1}u$.

\begin{lemma}
\label{lem:l1-duality-nosqrtn}
  Given $\bar{Q}(V) = \rho B^\top B + D_{V}^{-1}$, let $u\in\mathbb{R}^n$ and $V\in(0,\infty)^n$. Then
  \begin{equation}\label{eq:l1-duality-main}
    \|\bar{Q}(V)^{-1}u\|_1
    \le
    \sqrt{\sum_{i=1}^n V_i}\,\sqrt{u^\top \bar{Q}(V)^{-1}u}.
  \end{equation}
\end{lemma}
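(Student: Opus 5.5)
The bound \eqref{eq:l1-duality-main} should follow from one Cauchy--Schwarz step in the geometry defined by $\bar Q(V)^{-1}$, combined with the Loewner bound $\bar Q(V)^{-1}\preceq D_V$ already used repeatedly in the paper.

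Write $w:=\bar Q(V)^{-1}u$ and let $s:=\operatorname{sign}(w)\in\{-1,0,1\}^n$ componentwise, so that $\|w\|_1=s^\top w=s^\top \bar Q(V)^{-1}u$. Since $\bar Q(V)\succ 0$, the bilinear form $(x,y)\mapsto x^\top \bar Q(V)^{-1}y$ is an inner product. Cauchy--Schwarz in that inner product gives
\[
  \|w\|_1=s^\top \bar Q(V)^{-1}u\le \sqrt{s^\top \bar Q(V)^{-1}s}\,\sqrt{u^\top \bar Q(V)^{-1}u}.
\]
It then remains to control $s^\top \bar Q(V)^{-1}s$.

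Because $\rho B^\top B\succeq 0$, we have $\bar Q(V)\succeq D_V^{-1}\succ0$. Operator monotonicity of inversion then yields $\bar Q(V)^{-1}\preceq D_V$. Hence
\[
  s^\top \bar Q(V)^{-1}s\le s^\top D_V s=\sum_{i=1}^n s_i^2V_i\le \sum_{i=1}^n V_i,
\]
using $s_i^2\le 1$. Substituting this into the previous display gives \eqref{eq:l1-duality-main}.

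There is no serious obstacle here. The only points needing care are the choice of the sign vector $s$ (any $s$ with $|s_i|\le1$ and $s_iw_i=|w_i|$ works), so that the $\ell_1$ norm is expressed as a linear functional, and the justification that matrix inversion reverses the Loewner order on positive definite matrices. The latter is standard: $\bar Q\succeq D_V^{-1}$ implies $D_V^{1/2}\bar Q D_V^{1/2}\succeq I$, hence $D_V^{-1/2}\bar Q^{-1}D_V^{-1/2}\preceq I$.
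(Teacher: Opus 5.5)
Your proposal is correct and follows essentially the same argument as the paper: both write $\|\bar Q(V)^{-1}u\|_1$ as a linear functional $s^\top \bar Q(V)^{-1}u$, apply Cauchy--Schwarz in the $\bar Q(V)^{-1}$ inner product, and bound $s^\top \bar Q(V)^{-1}s\le s^\top D_V s\le\sum_i V_i$ using $\bar Q(V)^{-1}\preceq D_V$. The only cosmetic difference is that you plug in the explicit sign vector where the paper takes the supremum over $\|s\|_\infty\le 1$. You also justify the Loewner-order reversal, which the paper asserts without proof.
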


The proofs of Lemmas~\ref{lem:E1V-delta-app} and~\ref{lem:l1-duality-nosqrtn} are given in Appendix~\ref{app:proofs}. With these technical lemmas established, we now present the main drift condition for Case~3.

\begin{lemma}[Drift condition for $a>0, b=0$]
  \label{lem:drift-nullsmall}
  Assume $a>0$, $b=0$, $p>0$, and Assumption~\ref{ass:null-smallness-A}.
  Let $\delta=\delta(p)\in(0,\tfrac12]$ be as in Lemma~\ref{lem:E1V-delta-app}, and define the Lyapunov function
  \[
    G_\delta(V):=1+\sum_{i=1}^n\Big(V_i+V_i^{-\delta/2}\Big).
  \]
  Then there exist $\gamma\in(0,1)$ and $L<\infty$ (possibly depending on the dimension $n$) such that for all $V\in(0,\infty)^n$,
    $\mathbb E\!\left[G_\delta(\tilde V)\mid V\right]\ \le\ \gamma\,G_\delta(V)+L$.
  Explicitly, for any $\varepsilon>0$ satisfying
  \[
    \gamma_+:=\frac{|\mu|}{\sqrt{\sigma^2 a+\mu^2}}\sqrt{\,z^\top G^{-1}z\,}+\varepsilon<1,
  \]
  one may take $\gamma=\max\{\gamma_-,\ \gamma_+\} < 1$, where $\gamma_-:=C_1(p)\kappa(\delta(p))$.
\end{lemma}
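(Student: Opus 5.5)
Split $G_\delta$ into its linear part $\sum_i V_i$ and its negative-power part $\sum_i V_i^{-\delta/2}$. Bound the conditional expectation of each separately. The one-step map factors as $V\to M\to\tilde V$, so by the tower property (used exactly as in the proof of Lemma~\ref{lem:drift-a0mu0}) I first bound $\mathbb E[h(\tilde V_i)\mid M_i]$ and then integrate over $M\mid V\sim N(\eta(V),\sigma^2\bar Q(V)^{-1})$, with $\eta(V)=\bar Q(V)^{-1}\bar m$.

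\textbf{Negative-power part.} Lemma~\ref{lem:E1V-delta-app} gives
\[
\mathbb E[\tilde V_i^{-\delta/2}\mid M_i]\le C_1(p)|M_i/\sigma|^{-\delta}+C_2(p,\tilde a).
\]
Conditionally on $V$, $M_i/\sigma$ is Gaussian with variance $s_i^2=(\bar Q(V)^{-1})_{ii}$. Lemma~\ref{lem:two-bounds-nosqrtn} (Anderson's inequality) then gives $\mathbb E[|M_i/\sigma|^{-\delta}\mid V]\le\kappa(\delta)s_i^{-\delta}$. The first part of the same lemma gives $s_i^2\ge 1/\bar Q_{ii}=1/(\rho(B^\top B)_{ii}+V_i^{-1})$. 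Hence, using subadditivity of $t\mapsto t^{\delta/2}$,
\[
s_i^{-\delta}\le(\rho(B^\top B)_{ii})^{\delta/2}+V_i^{-\delta/2}.
\]
Summing over $i$ yields a coefficient $\gamma_-=C_1(p)\kappa(\delta)<1$ on $\sum_i V_i^{-\delta/2}$, plus a constant.

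\textbf{Linear part.} Lemma~\ref{lem:EV-linear-allp} with $b=0$ gives
\[
\sum_i\mathbb E[\tilde V_i\mid V]\le\tilde a^{-1/2}\,\mathbb E[\|M/\sigma\|_1\mid V]+nC_{\rm lin}/\tilde a.
\]
I then split $\|M\|_1\le\|\eta(V)\|_1+\mathbb E\|Z\|_1$.
\begin{itemize}
\item \emph{Fluctuation term.} Using $(\bar Q^{-1})_{ii}\le V_i$ gives $\mathbb E|Z_i|\le\sqrt{2/\pi}\,\sigma\sqrt{V_i}$. Young's inequality turns $\sqrt{V_i}$ into $\tfrac\varepsilon2V_i$ plus a constant.
\item \emph{Mean term.} Decompose $\bar m=\bar m_0+\bar m_\parallel$ with $\bar m_0=P_{\Null(B)}\bar m$. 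For the null piece, Lemma~\ref{lem:l1-duality-nosqrtn} and $\bar Q^{-1}\preceq D_V$ give $\|\bar Q^{-1}\bar m_0\|_1\le\|\bar m_0\|_2\sum_iV_i$. By Lemma~\ref{lem:nullproj-1-via-A}, $\|\bar m_0\|_2=|\mu|\sqrt{z^\top G^{-1}z}$, so after dividing by $\sigma\sqrt{\tilde a}=\sqrt{\sigma^2a+\mu^2}$ the coefficient is exactly the null-smallness ratio of Assumption~\ref{ass:null-smallness-A}.
\item \emph{Range piece (Step 2(A), which Case 2 cites).} Write $\bar m_\parallel=B^\top w$. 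Lemma~\ref{lem:l1-duality-nosqrtn} gives $\|\bar Q^{-1}\bar m_\parallel\|_1\le\sqrt{\sum V_i}\sqrt{w^\top B\bar Q^{-1}B^\top w}$. Since $\bar Q\succeq\rho B^\top B$, we have $B\bar Q^{-1}B^\top\preceq\rho^{-1}I$ (push-through identity), so the last factor is at most $\|w\|_2/\sqrt\rho$. This gives the bound $c_\parallel\sqrt{\sum V_i}\le\tfrac\varepsilon2\sum V_i+c_\parallel^2/(2\varepsilon)$.
\end{itemize}
Altogether the coefficient on $\sum_iV_i$ is $\gamma_+$, with $2\varepsilon$ absorbed by renaming $\varepsilon$.

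\textbf{Combining.} Adding the two parts gives
\[
\mathbb E[G_\delta(\tilde V)\mid V]\le\gamma_+\sum V_i+\gamma_-\sum V_i^{-\delta/2}+L\le\max\{\gamma_-,\gamma_+\}G_\delta(V)+L,
\]
with $L$ collecting all constants (these depend on $n$, $\|w\|$, $\rho$, $B$).

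\textbf{Main obstacle.} The delicate step is the negative-moment part. $\gamma_-<1$ relies on the constant in Lemma~\ref{lem:E1V-delta-app} combining with $\kappa(\delta)$, and we must make sure no factor inflates the coefficient of $V_i^{-\delta/2}$. In particular the lower bound on $s_i$ must be used with coefficient exactly $1$ on $V_i^{-\delta/2}$, which the subadditivity step provides. A second, lesser point is verifying $B\bar Q^{-1}B^\top\preceq\rho^{-1}I$ rigorously when $B$ is rank deficient; I would do this via the SVD of $B$.
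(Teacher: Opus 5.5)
Your proposal is correct and follows the paper's proof essentially step for step: you use the same split of $G_\delta$, the same combination of Lemmas~\ref{lem:E1V-delta-app} and~\ref{lem:two-bounds-nosqrtn} with subadditivity to get $\gamma_-=C_1(p)\kappa(\delta)$, and the same mean/fluctuation and null/range decomposition to get $\gamma_+$. The only cosmetic difference is the range piece: you bound $B\bar Q(V)^{-1}B^\top\preceq\rho^{-1}I$ via a preimage $\bar m_\parallel=B^\top w$, whereas the paper uses $\bar Q(V)^{-1}\preceq\rho^{-1}(B^\top B)^\dagger$ on $\Range(B^\top)$ with the smallest positive eigenvalue $\lambda_+$; taking $w$ of minimal norm gives $\|w\|_2\le\|\bar m_\parallel\|_2/\sqrt{\lambda_+}$, so the two constants coincide.
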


\begin{proof}
  Let $\mathcal S:=\Range(B^\top)\subseteq\mathbb R^n$ and let $U\in\mathbb R^{n\times d}$ be any orthonormal basis matrix of $\mathcal S$ (where $d=\dim\mathcal S$).
  Define
    $\lambda_+ := \lambda_{\min}\!\big(U^\top B^\top B\,U\big) \;>\; 0$,
  that is, the smallest strictly positive eigenvalue of $B^\top B$ (equivalently, the smallest eigenvalue of $B^\top B$ restricted to $\Range(B^\top)$).
  Let $\eta(V):=\bar Q(V)^{-1}\bar m$.
  By linearity of conditional expectation,
  \[
    \mathbb E\!\left[G_\delta(\tilde V)\mid V\right]
    = 1 + \sum_{i=1}^n\mathbb E[\tilde V_i\mid V] + \sum_{i=1}^n\mathbb E[\tilde V_i^{-\delta/2}\mid V].
  \]

  We first bound the negative-moment term. By Lemma~\ref{lem:E1V-delta-app}, for each $i$,
  \[
    \mathbb E\!\left[\tilde V_i^{-\delta/2}\mid M_i\right]
    \le C_1(p)\Bigl|\frac{M_i}{\sigma}\Bigr|^{-\delta} + C_2(p,\tilde a).
  \]
  Conditioning on $V$, we have $Z_i:=M_i/\sigma\sim N(\eta_i(V)/\sigma,\ (\bar Q(V)^{-1})_{ii})$.
  By Lemma~\ref{lem:two-bounds-nosqrtn},
  \[
    \mathbb E\!\left[\bigl|Z_i\bigr|^{-\delta}\mid V\right]
    \le \kappa(\delta)\,(\bar Q(V)^{-1})_{ii}^{-\delta/2}.
  \]
  Because $\bar Q(V)_{ii} = \rho(B^\top B)_{ii} + V_i^{-1}$ and $(\bar Q(V)^{-1})_{ii} \ge 1/\bar Q(V)_{ii}$, we have the inequality 
    $(\bar Q(V)^{-1})_{ii}^{-\delta/2} \le \bar Q(V)_{ii}^{\delta/2}$.
  Using $(x+y)^q \le x^q + y^q$ for $q=\delta/2 \in (0, 1/4]$, we obtain
  \[
    \bar Q(V)_{ii}^{\delta/2}
    \le \bigl(\rho(B^\top B)_{ii}\bigr)^{\delta/2} + V_i^{-\delta/2}.
  \]
  Combining these estimates gives
    $\mathbb E[\tilde V_i^{-\delta/2}\mid V]
    \le C_1(p)\kappa(\delta)\,V_i^{-\delta/2} + L_{-,i}$,
  where $L_{-,i}$ denotes the sum of all terms independent of $V$. Summing over $i$ and letting $L_- := \sum_{i=1}^n L_{-,i}$, we obtain
  \[
    \sum_{i=1}^n\mathbb E[\tilde V_i^{-\delta/2}\mid V]
    \le \gamma_- \sum_{i=1}^n V_i^{-\delta/2} + L_-,
    \qquad
    \gamma_- := C_1(p)\kappa(\delta).
  \]
  By Lemma~\ref{lem:E1V-delta-app}, $\gamma_-<1$.
  Next, we bound the linear term. By Lemma~\ref{lem:EV-linear-allp},
  \[
    \sum_{i=1}^n\mathbb E[\tilde V_i\mid V]
    \le \frac{1}{\sqrt{\tilde a}}\,
    \mathbb E\Bigl[\Bigl\|\frac{M}{\sigma}\Bigr\|_1\Bigm|V\Bigr]
    + n\Bigl(\frac{\sqrt b}{\sqrt{\tilde a}} + \frac{C_{\mathrm{lin}}(p,\tilde a)}{\tilde a}\Bigr).
  \]
  Since $b=0$ in this case and $M/\sigma \mid V \sim N(\eta(V)/\sigma,\ \bar Q(V)^{-1})$, we can combine this conditional distribution with the triangle inequality to obtain
  \[
    \mathbb E\Bigl\|\frac{M}{\sigma}\Bigr\|_1\Bigm|V
    \le
    \Bigl\|\frac{\eta(V)}{\sigma}\Bigr\|_1
    + \mathbb E\|Z\|_1,
    \qquad \text{where } Z\sim N(0,\bar Q(V)^{-1}).
  \]

  We now decompose $\bar m = \bar m_0 + \bar m_\parallel$, where $\bar m_0 := P_{\Null(B)}\bar m$ and $\bar m_\parallel \in \Range(B^\top)$, so that
    $\eta(V)=\bar Q(V)^{-1}\bar m_0 + \bar Q(V)^{-1}\bar m_\parallel$.
  By Lemma~\ref{lem:l1-duality-nosqrtn} and the bound $\bar Q(V)^{-1} \preceq D_V$,
  \[
    \Bigl\|\bar Q(V)^{-1}\bar m_0\Bigr\|_1
    \le \sqrt{\sum_{i=1}^n V_i}\,\sqrt{\bar m_0^\top D_V \bar m_0}
    \le \|\bar m_0\|_2 \sum_{i=1}^n V_i.
  \]
  Lemma~\ref{lem:nullproj-1-via-A} gives $\|\bar m_0\|_2 = |\mu|\sqrt{z^\top G^{-1}z}$, hence
  \[
    \frac{1}{\sigma\sqrt{\tilde a}}\Bigl\|\bar Q(V)^{-1}\bar m_0\Bigr\|_1
    \le
    \frac{|\mu|}{\sqrt{\sigma^2 a+\mu^2}}\sqrt{z^\top G^{-1}z}\;\sum_{i=1}^n V_i.
  \]

  For the component in $\Range(B^\top)$, Lemma~\ref{lem:l1-duality-nosqrtn} also gives
  \[
    \Bigl\|\bar Q(V)^{-1}\bar m_\parallel\Bigr\|_1
    \le \sqrt{\sum_{i=1}^n V_i}\,\sqrt{\bar m_\parallel^\top \bar Q(V)^{-1}\bar m_\parallel}.
  \]
  Since $\bar Q(V)\succeq \rho B^\top B$, the restriction of $\bar Q(V)^{-1}$ to $\mathcal S=\Range(B^\top)$ satisfies $\bar Q(V)^{-1}\preceq \frac{1}{\rho}(B^\top B)^\dagger$, and therefore
  \[
    \bar m_\parallel^\top \bar Q(V)^{-1}\bar m_\parallel
    \le \frac{1}{\rho\lambda_+}\|\bar m_\parallel\|_2^2.
  \]
  Let $S:=\sum_{i=1}^n V_i$ and $c_\parallel := \frac{\|\bar m_\parallel\|_2}{\sigma\sqrt{\tilde a}\sqrt{\rho\lambda_+}}$. Then
  \[
    \frac{1}{\sqrt{\tilde a}}\Bigl\|\frac{\bar Q(V)^{-1}\bar m_\parallel}{\sigma}\Bigr\|_1
    \le c_\parallel \sqrt{S}.
  \]
  For any $\varepsilon>0$, we have $c_\parallel\sqrt{S} \le \frac{\varepsilon}{2}S + \frac{c_\parallel^2}{2\varepsilon}$.

  Next, we bound the Gaussian fluctuation term. Since $(\bar Q(V)^{-1})_{ii} \le V_i$, we have
  \[
    \mathbb E|Z_i| = \sqrt{\frac{2}{\pi}}\sqrt{(\bar Q(V)^{-1})_{ii}} \le \sqrt{\frac{2}{\pi}}\sqrt{V_i}.
  \]
  Thus
  \[
    \frac{1}{\sqrt{\tilde a}}\mathbb E\|Z\|_1
    \le c_0 \sum_{i=1}^n \sqrt{V_i},
    \qquad c_0 := \sqrt{\frac{2}{\pi}}\frac{1}{\sqrt{\tilde a}}.
  \]
  Using $c_0\sqrt{V_i} \le \frac{\varepsilon}{2}V_i + \frac{c_0^2}{2\varepsilon}$ and summing over $i$, we obtain
  \[
    \frac{1}{\sqrt{\tilde a}}\mathbb E\|Z\|_1
    \le \frac{\varepsilon}{2}\sum_{i=1}^n V_i + \frac{n c_0^2}{2\varepsilon}.
  \]

  Combining the preceding bounds, there exists $L_+<\infty$ such that
  \[
    \sum_{i=1}^n\mathbb E[\tilde V_i\mid V]
    \le
    \Biggl(\frac{|\mu|}{\sqrt{\sigma^2 a+\mu^2}}\sqrt{z^\top G^{-1}z}+\varepsilon\Biggr)\sum_{i=1}^n V_i + L_+.
  \]
  By Assumption~\ref{ass:null-smallness-A}, we may choose $\varepsilon>0$ small enough so that
  \[
    \gamma_+:=\frac{|\mu|}{\sqrt{\sigma^2 a+\mu^2}}\sqrt{z^\top G^{-1}z}+\varepsilon<1.
  \]

  Finally, set $\gamma := \max\{\gamma_-,\gamma_+\}<1$ and $L:=1+L_-+L_+$. Then for all $V\in(0,\infty)^n$,
    $\mathbb E\!\left[G_\delta(\tilde V)\mid V\right] \le \gamma\,G_\delta(V) + L$,
  completing the proof of the drift inequality.
\end{proof}

Having established the drift condition, we now turn to proving the minorization condition, which completes the verification of the conditions for geometric ergodicity.

\begin{lemma}[Minorization on $\{G(V)\le d\}$]\label{lem:minorization}
  Fix $\alpha,\beta>0$ and define
  \[
    G(V):=\sum_{i=1}^n\bigl(V_i^\alpha+V_i^{-\beta}\bigr),
    \qquad
    C_d:=\{V\in(0,\infty)^n:\ G(V)\le d\}.
  \]
  Assume that for each $V\in(0,\infty)^n$, the conditional density $\pi(M\mid V,Y)$ exists,
  is strictly positive for all $M\in\mathbb R^n$, and the map
  $V\mapsto \pi(M\mid V,Y)$ is continuous on $(0,\infty)^n$ for each fixed $M$.
  (For the Gaussian update $M\mid(V,Y)\sim N(Q(V)^{-1}m,Q(V)^{-1})$, these conditions hold).
  Let
  \[
    k(V,\tilde{V}):=\int_{\mathbb R^n}\pi(\tilde{V}\mid M)\,\pi(M\mid V,Y)\,dM,
    \qquad V\in(0,\infty)^n,\ \tilde{V}\in(0,\infty)^n.
  \]
  Then, for every $d>0$, there exist $\epsilon_d\in(0,1]$ and a density $f_d(\tilde{V})$ on $(0,\infty)^n$
  such that
  \[
    k(V,\tilde{V})\ \ge\ \epsilon_d\, f_d(\tilde{V}),
    \qquad \forall\,V\in C_d,\ \forall\,\tilde{V}\in(0,\infty)^n.
  \]
\end{lemma}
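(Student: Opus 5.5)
The plan is to exploit compactness of $C_d$ in $(0,\infty)^n$ and build the minorizing measure by taking a pointwise infimum over $C_d$ of the Gaussian density, restricted to a compact set of $M$ values. Since $\alpha,\beta>0$, the constraint $G(V)\le d$ forces $V_i^\alpha\le d$ and $V_i^{-\beta}\le d$ for every $i$. Hence
\[
C_d\subseteq\bigl[d^{-1/\beta},\,d^{1/\alpha}\bigr]^n ,
\]
and $C_d$ is closed because $G$ is continuous, so $C_d$ is a compact subset of $(0,\infty)^n$. If $C_d$ is empty the statement is vacuous, so we may take any density and $\epsilon_d=1$.

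Next, fix a compact set of $M$ values, say the unit ball $B_1:=\{M:\|M\|_2\le 1\}$ in $\mathbb R^n$. By assumption the map $(V,M)\mapsto \pi(M\mid V,Y)$ is strictly positive. I need it to be jointly continuous on $C_d\times B_1$. The hypothesis only gives continuity in $V$ for fixed $M$, but for the Gaussian update this is immediate: $Q(V)$ and $Q(V)^{-1}m$ are continuous in $V$, and the density is an explicit continuous function of $(M,V)$. In the general statement I will argue joint continuity through this explicit form, which is the case the lemma is actually applied to. Then
\[
c_d:=\min_{V\in C_d,\ M\in B_1}\pi(M\mid V,Y)>0,
\]
since a positive continuous function attains a positive minimum on a compact set. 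This is the only genuinely delicate step: a positive lower bound requires continuity jointly, not merely separately, and that is where I expect to have to be careful.

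With this bound, for every $V\in C_d$ and every $\tilde V$, using nonnegativity of the integrand,
\[
k(V,\tilde V)\ \ge\ \int_{B_1}\pi(\tilde V\mid M)\,\pi(M\mid V,Y)\,dM\ \ge\ c_d\int_{B_1}\pi(\tilde V\mid M)\,dM=:c_d\,g(\tilde V).
\]
By Tonelli, and because each $\pi(\cdot\mid M)$ is a probability density (a product of GIG densities),
\[
\int_{(0,\infty)^n}g(\tilde V)\,d\tilde V=|B_1|\in(0,\infty),
\]
where $|B_1|$ is the Lebesgue volume of the unit ball. Set $f_d:=g/|B_1|$, a probability density, and $\epsilon_d:=c_d|B_1|$. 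This gives $k(V,\tilde V)\ge \epsilon_d f_d(\tilde V)$ on $C_d$.

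Finally, $\epsilon_d\le1$ follows by integrating both sides of this inequality in $\tilde V$, since $k(V,\cdot)$ integrates to one. If needed, the same argument works with $B_1$ replaced by any compact set of positive measure.
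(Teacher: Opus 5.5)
Your compactness argument for $C_d$, the treatment of the empty case, the Tonelli normalization and the bound $\epsilon_d\le 1$ are all fine, and for the Gaussian update your proof is complete. It does not, however, prove the lemma as stated. The hypothesis is only that $V\mapsto\pi(M\mid V,Y)$ is continuous for each fixed $M$. Your key step, $c_d=\min_{C_d\times B_1}\pi(M\mid V,Y)>0$, needs joint continuity in $(V,M)$, and under separate continuity that step can fail. Here is a one-dimensional example. Set $x=V-V_0$ and $t=2xM/(x^2+M^2)$, with $t=0$ at $x=M=0$. The function $h(V,M)=\bigl(1-t^2+x^2+M^2\bigr)e^{-M^2}$ is strictly positive and continuous in $V$ for every fixed $M$. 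Normalizing it in $M$ (the normalizer is continuous and bounded on compacts) gives a conditional density that meets every stated assumption. Yet along $M=x\to 0$ it equals $2x^2e^{-x^2}\to 0$, so its infimum over $C_d\times B_1$ is zero whenever $V_0$ lies in the interior of $C_d$. Falling back on the explicit Gaussian form therefore proves a narrower statement than the one given.

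The missing idea is that you do not need a lower bound that is uniform in $M$. Minimize over $V$ only, separately for each fixed $M$:
\begin{itemize}
\item $g_d(M):=\min_{V\in C_d}\pi(M\mid V,Y)$ exists and is strictly positive, by continuity in $V$ on the compact set $C_d$.
\item $g_d$ is measurable, because the minimum can be taken over a countable dense subset of $C_d$.
\item $\epsilon_d:=\int_{\mathbb R^n}g_d(M)\,dM$ is positive simply because $g_d>0$ everywhere, and it is at most $1$ because $g_d\le\pi(\cdot\mid V_0,Y)$ for any $V_0\in C_d$.
\item With $f_d(\tilde V):=\epsilon_d^{-1}\int\pi(\tilde V\mid M)\,g_d(M)\,dM$, your Tonelli step goes through unchanged over all of $\mathbb R^n$ instead of $B_1$.
\end{itemize}
This is exactly the paper's proof. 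Your ball-restricted version is a valid special case under joint continuity. It gives a very explicit minorizing density (the average of $\pi(\cdot\mid M)$ over the ball) and avoids the measurability question for $g_d$. But it relies on an assumption the lemma does not make, and it produces a smaller $\epsilon_d$.
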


\begin{proof}
  First, observe that $C_d$ is compact. Indeed, for any $V \in C_d$, we have
  $V_i^\alpha \le G(V) \le d$ and $V_i^{-\beta} \le d$ for each $i$, implying that
  $d^{-1/\beta} \le V_i \le d^{1/\alpha}$. Thus,
  $C_d$ is a closed and bounded subset of $[d^{-1/\beta}, \, d^{1/\alpha}]^n \subset (0, \infty)^n$.

  Fix $M \in \mathbb R^n$. Since the map $V \mapsto \pi(M \mid V,Y)$ is continuous on $(0, \infty)^n$,
  its restriction to the compact set $C_d$ attains its minimum by the Weierstrass theorem. Define
  \[
    g_d(M) := \min_{V \in C_d} \pi(M \mid V,Y).
  \]
  Since $\pi(M \mid V,Y) > 0$ for all $V \in (0, \infty)^n$, it follows that $g_d(M) > 0$ for all $M$.
  Moreover, choosing an arbitrary $V_0 \in C_d$, we have $0 < g_d(M) \le \pi(M \mid V_0,Y)$ for all $M$,
  showing that $g_d$ is integrable and satisfies
  \[
    0 < \epsilon_d := \int_{\mathbb R^n} g_d(M) \, dM \le \int_{\mathbb R^n} \pi(M \mid V_0,Y) \, dM = 1.
  \]
  Set $\tilde g_d(M) := g_d(M) / \epsilon_d$, which is a probability density on $\mathbb R^n$ and define
  \[
    f_d(\tilde V) := \int_{\mathbb R^n} \pi(\tilde V \mid M) \, \tilde g_d(M) \, dM.
  \]
  By Tonelli's theorem, and because $\int_{(0, \infty)^n} \pi(\tilde V \mid M) \, d\tilde V = 1$, we obtain
  \[
    \int_{(0, \infty)^n} f_d(\tilde V) \, d\tilde V
    = \int_{\mathbb R^n} \tilde g_d(M) \, dM = 1,
  \]
  demonstrating that $f_d$ is a probability density on $(0, \infty)^n$.

  Finally, for any $V \in C_d$, we have $\pi(M \mid V,Y) \ge g_d(M)$ for all $M$; hence,
  \[
    k(V, \tilde V)
    = \int_{\mathbb R^n} \pi(\tilde V \mid M) \pi(M \mid V,Y) \, dM
    \ge \int_{\mathbb R^n} \pi(\tilde V \mid M) \, g_d(M) \, dM
    = \epsilon_d \, f_d(\tilde V),
  \]
  which is the desired minorization.
\end{proof}

With these preliminary results established, we are now ready to prove Proposition~\ref{prop:geo-dm-cases}.

\begin{proof}[Proof of Proposition~\ref{prop:geo-dm-cases}]
  Recall that Proposition~\ref{prop:geo-dm-cases} establishes geometric ergodicity in three regimes based on the drift and minorization conditions.
  \begin{description}
    \item[Case \textup{(I)}] follows from Lemmas~\ref{lem:drift-a0mu0} and \ref{lem:minorization} via Theorem~\ref{thm:drift-minorization}.

    \item[Case \textup{(II)}] follows from Lemmas~\ref{lem:drift-a0-mune0} and \ref{lem:minorization} under Assumption~\ref{ass:null-smallness-A}, again via Theorem~\ref{thm:drift-minorization}.

    \item[Case \textup{(III)}] follows from Lemmas~\ref{lem:drift-nullsmall} and \ref{lem:minorization} under Assumption~\ref{ass:null-smallness-A}, via Theorem~\ref{thm:drift-minorization}.
  \end{description}
\end{proof}

\section{Simulation study}\label{chp:simulation-study}

\subsection{Overview and common setup}\label{sec:sim-common}

This section reports two simulation studies (S1 and S2) that numerically illustrate and stress-test the theoretical mixing results established in Section~\ref{chp:proof}. Both studies focus on the $V$-marginal Gibbs chain induced by the non-centered parametrization, since the key theoretical mechanisms (trace-class properties and drift--minorization) are formulated for the marginal transition operator on $V$. Mixing behavior is compared using standard MCMC efficiency diagnostics.

Simulation S1 examines how mixing varies across the parameter regimes in Table~\ref{tab:geom-ergodicity} by varying the GIG parameters $(p,a,b,\mu)$. The goal is to contrast trace-class settings (Theorem~\ref{thm:trace-class}) against boundary/heavy-tail regimes where geometric ergodicity is established via drift--minorization (Proposition~\ref{prop:geo-dm-cases}). Simulation S2 investigates the practical impact of the null-smallness mechanism by fixing a boundary-sensitive distributional regime and keeping the geometry fixed (i.e., fix $A$ and hence $B=A K(\phi)^{-1}$), while scanning the drift parameter $\mu$ so that the null-smallness constant changes. Mixing is assessed using boundary-sensitive and null-sensitive summaries.

Both studies use the non-centered parametrization in \eqref{eq:non-centered-combined}. Let $V\in(0,\infty)^n$ denote the mixing variables and $M\in\mathbb R^n$ the latent field. Given fixed matrices $A$ and $K(\phi)$ (with $K(\phi)$ invertible), define $B:=A K(\phi)^{-1}$. The model is
\begin{align*}
  Y \mid M & \sim N\!\bigl( X\beta + B(M-\mu h),\ \sigma_\epsilon^2 I \bigr),      \\
  M \mid V & \sim N\!\ bigl(\mu V,\ \mathrm{diag}(\sigma^2 V)\bigr),                \\
  V_i      & \overset{\mathrm{ind.}}{\sim} \mathrm{GIG}(p,a,b),\qquad i=1,\dots,n,
\end{align*}
where we use the GIG parametrization with density $f(v;p,a,b)\ \propto\ v^{p-1}\exp\!\bigl\{-\frac12(av+b/v)\bigr\}$ for $v>0$. Under this parametrization, the non-centered Gibbs sampler alternates between the Gaussian update
\[
  M \mid V, Y \sim N\!\bigl(\bar Q(V)^{-1}\bar m,\ \sigma^2 \bar Q(V)^{-1}\bigr),
  \qquad \bar Q(V):=\rho\,B^\top B + D_V^{-1},\quad \rho:=\sigma^2\sigma_\epsilon^{-2},
\]
and independent coordinate-wise GIG updates
\[
  \tilde V_i \mid M_i \sim \mathrm{GIG}\!\left(p-\tfrac12,\ a+\left(\tfrac{\mu}{\sigma}\right)^2,\
  b+\left(\tfrac{M_i}{\sigma}\right)^2\right),
  \qquad i=1,\dots,n,
\]
where $D_V=\mathrm{diag}(V_1,\dots,V_n)$ and $\bar m:=\rho\,B^\top\!\bigl(Y-X\beta+\mu Bh\bigr)+\mu\mathbf 1$.

Unless otherwise stated, we fix the problem size to $n=m=300$ and set $h=\mathbf 1$, $X\beta=\mathbf 0$, $\sigma=1$, $\sigma_\epsilon=1$, and $Y=\mathbf 0$. The choice $Y=\mathbf 0$ is a fixed benchmark observation (not prior sampling), introduced to remove randomness from data realizations while preserving the dependence structure through $\rho B^\top B$. For $K(\phi)$ we use an AR(1)-type correlation matrix with parameter $\phi=0.5$ and compute $K(\phi)^{-1}$ numerically. The design matrix $A$ is specified within each study (S1: $A=I_n$; S2: $A=I_n-uu^\top$).

For each chain and each monitored scalar statistic, we compute the integrated autocorrelation time (IACT) and the effective sample size per second (ESS/sec), where wall-clock time is measured per chain. For a retained length $N$, $\mathrm{ESS}=N/\mathrm{IACT}$. To assess agreement across chains we compute split-$\widehat R$ and report maxima over the monitored statistics, as specified in each study.


\subsection{Simulation study (S1): mixing comparison across parameter regimes}
\label{sec:sim-S1}

This simulation study numerically illustrates how the mixing behavior of the $V$-marginal Gibbs chain varies across the parameter regimes summarized in Table~\ref{tab:geom-ergodicity}. In particular, we contrast regimes where the marginal Markov operator $\Lambda$ is trace-class against bound ary/heavy-tail regimes where trace-classness is unavailable and geometric ergodicity is established via drift--minorization.

We adopt the common model, sampler, and benchmark platform described in Section~\ref{sec:sim-common}. In S1 we set $A=I_n$, so that $B=K(\phi)^{-1}$, and vary only the GIG parameters $(p,a,b,\mu)$ over the representative regime grid specified below. We evaluate six representative parameter points that cover all regimes in Table~\ref{tab:geom-ergodicity}. Each point specifies $(p,a,b,\mu)$, while keeping $(n,A,K(\phi),\sigma,\sigma_\epsilon,Y)$ fixed as in Section~\ref{sec:sim-common}:

\begin{table}[t]
  \centering
  \caption{Representative parameter points for Simulation S1.}
  \label{tab:sim-S1-grid}
  \begin{tabular}{ccccccl}
    \toprule
    Point & Regime & $p$    & $a$ & $b$ & $\mu$ & Key tag                                            \\
    \midrule
    A     & TC-1   & $-0.5$ & $1$ & $1$ & $1$   & IG mixing ($p=-\tfrac12$); NIG benchmark           \\
    B     & TC-1   & $1.0$  & $1$ & $1$ & $1$   & trace-class interior ($a>0,b>0$)                   \\
    C     & TC-2   & $0.6$  & $1$ & $0$ & $1$   & $b=0$ boundary; trace-class ($p>\tfrac12$)         \\
    D     & DM-III & $0.3$  & $1$ & $0$ & $1$   & $b=0$ boundary; non-trace-class ($0<p\le\tfrac12$) \\
    E     & DM-I   & $-1.5$ & $0$ & $2$ & $0$   & Inv-Gamma full conditional ($a=\mu=0$)             \\
    F     & DM-II  & $-1.5$ & $0$ & $2$ & $1$   & drifted heavy-tail ($a=0,p<0,\mu\neq0$)            \\
    \bottomrule
  \end{tabular}
\end{table}
Here, TC-1 and TC-2 fall into the trace-class regimes of Theorem~\ref{thm:trace-class}, while DM-I--DM-III correspond to boundary/heavy-tail regimes covered by the drift--minorization argument in Proposition~\ref{prop:geo-dm-cases}.

For each parameter point (A--F), we run $4$ overdispersed chains with $T=50{,}000$ iterations, $T_{\mathrm{burn}}=5{,}000$, and thinning${}=1$. The four initial states are chosen as $V^{(0)}= \mathbf 1$, $V^{(0)}= 0.1\,\mathbf 1$, $V^{(0)}= 10\,\mathbf 1$, and $V^{(0)}\sim \mathrm{GIG}(1,1,1)$. No thinning is applied; all reported efficiency measures are based on the retained post-burn-in draws. To connect empirical diagnostics to the drift functions used in the theoretical analysis, we monitor three scalar summaries of the $V$-chain:
\[
  S_+(t):=\frac{1}{n}\sum_{i=1}^n V_i^{(t)},\qquad
  S_-(t):=\frac{1}{n}\sum_{i=1}^n \bigl(V_i^{(t)}\bigr)^{-q},\qquad
  S_{\log}(t):=\frac{1}{n}\sum_{i=1}^n \log V_i^{(t)},
\]
with $q=0.25$. The statistic $S_+$ is sensitive to right-tail excursions, while $S_-$ and $S_{\log}$ are sensitive to boundary behavior near $V\downarrow 0$, which is particularly relevant in the $b=0$ regimes. For each statistic we report IACT, ESS/sec, and split-$\widehat R$, and summarize agreement by $\max\{\widehat R(S_+),\widehat R(S_-),\widehat R(S_{\log})\}$. Based on our theoretical results, we expect: (i) trace-class regimes (TC-1, TC-2) to typically exhibit higher sampling efficiency, and (ii) boundary regimes with $b=0$ and $0<p\le 1/2$ (DM-III) to show the most pronounced slowdown, particularly for $S_-$ and $S_{\log}$.

\begin{table}[t]
  \centering
  \small
  \caption{S1 simulation summary (IACT and ESS/sec).}
  \label{tab:sim-S1-summary}
  \begin{tabular}{llrrrrrr}
    \toprule
    \multirow{2}{*}{Regime} & \multirow{2}{*}{Point} & \multicolumn{2}{c}{$S_+$} & \multicolumn{2}{c}{$S_-$} & \multicolumn{2}{c}{$S_{\log}$}                            \\
    \cmidrule(lr){3-4} \cmidrule(lr){5-6} \cmidrule(lr){7-8}
                            &                        & IACT                      & ESS/sec                   & IACT                           & ESS/sec & IACT & ESS/sec \\
    \midrule
    TC-1                    & A                      & 1.31                      & 90.91                     & 1.33                           & 90.02   & 1.34 & 89.07   \\
    TC-1                    & B                      & 1.13                      & 105.69                    & 1.15                           & 104.03  & 1.15 & 103.78  \\
    TC-2                    & C                      & 1.48                      & 80.71                     & 1.80                           & 67.89   & 2.28 & 52.53   \\
    DM-III                  & D                      & 1.97                      & 60.79                     & 4.71                           & 29.49   & 6.75 & 17.74   \\
    DM-I                    & E                      & 1.02                      & 119.83                    & 1.17                           & 104.37  & 1.17 & 104.33  \\
    DM-II                   & F                      & 1.27                      & 94.15                     & 1.26                           & 94.64   & 1.27 & 93.70   \\
    \bottomrule
  \end{tabular}
\end{table}

Table~\ref{tab:sim-S1-summary} reports efficiency diagnostics for the six representative parameter points (A--F). Across all regimes and monitored summaries, split-$\widehat R$ values are extremely close to one (with $\max \widehat R \approx 1.0001$), indicating consistent behavior across chains for the scalar summaries considered.
A clear qualitative pattern is that the $b=0$ boundary regimes induce a pronounced slowdown, especially for summaries sensitive to $V\downarrow 0$. In the trace-class interior regime (TC-1, points A--B), the IACTs remain close to one and ESS/sec is high across $S_+,S_-,S_{\log}$. Moving to $b=0$ with $p>1/2$ (TC-2, point C), mixing degrades moderately, most noticeably for $S_{\log}$. The strongest degradation occurs at $b=0$ with $0<p\le 1/2$ (DM-III, point D), where $S_-$ and $S_{\log}$ exhibit substantially larger IACT and the lowest ESS/sec. Comparing DM-I and DM-II, introducing a nonzero drift parameter $\mu$ increases dependence in the chain: point F ($\mu\neq 0$) exhibits systematically larger IACTs and smaller ESS/sec than point E ($\mu=0$), although both remain well-behaved in this experiment.

These empirical findings align with the mechanisms underlying Table~\ref{tab:geom-ergodicity}. Trace-class regimes typically exhibit stronger $L_2(\pi)$ spectral behavior, which often translates into higher sampling efficiency. When $b=0$ and $p$ is small (DM-III), the target distribution exhibits more pronounced boundary behavior near $V\downarrow 0$; correspondingly, the chain mixes slowly for near-boundary functionals such as negative moments and $\log V$, consistent with Lyapunov functions involving negative powers of $V$ in the drift--minorization analysis.


\subsection{Simulation study (S2): null-smallness scan by varying $\mu$}
\label{sec:sim-S2}

This simulation study probes how the null-smallness constant affects mixing of the $V$-marginal Gibbs chain in a controlled setting. We keep the geometry fixed (i.e., fix $A$ and hence $B=A K(\phi)^{-1}$) and scan the drift parameter $\mu$. Varying $\mu$ changes both the combined quantity $\tilde a(\mu)=a+\bigl(\mu/\sigma\bigr)^2$ and the forcing term $\bar m(\mu)$ in the Gaussian update, yielding a one-dimensional stress test in which the null-smallness constant varies while the global coupling induced by $B^\top B$ is held fixed.
We adopt the common model, non-centered Gibbs sampler, and benchmark platform described in Section~\ref{sec:sim-common}. In S2 we fix the design matrix to have a one-dimensional null space by setting $A  :=  I_n - u u^\top$ with $u:=\mathbf 1/\|\mathbf 1\|_2$, so that $\Null(A)=\mathrm{span}\{u\}$ and $B := A K(\phi)^{-1}$ is fixed throughout the experiment (with $K(\phi)$ the AR(1)-type correlation matrix at $\phi=0.5$). We work under the benchmark $Y=\mathbf 0$ and $X\beta=\mathbf 0$, so that the dependence on $\mu$ enters through the drift term only.

We scan the drift parameter over a fixed grid $\mu \in \{\mu_1,\dots,\mu_L\}$ and for each $\mu$ compute a null-smallness constant associated with the fixed operator $B$. Let $ u_0$ be a unit vector spanning $\Null(B)$. For $B=A K(\phi)^{-1}$ with $A=I-uu^\top$, we have $\Null(B)=\mathrm{span}\{K(\phi)u\}$; hence we take $u_0  :=  K(\phi)u/\|K(\phi)u\|_2$. Recall $\tilde a(\mu)  :=  a+\bigl(\mu/\sigma\bigr)^2$, $\bar m(\mu)  :=  \rho\,B^\top\!\bigl(Y-X\beta+\mu Bh\bigr)+\mu\mathbf 1$, and $\rho:=\sigma^2\sigma_\epsilon^{-2}$. We define
\begin{equation}\label{eq:gamma-ns-mu}
  \gamma_{\mathrm{ns}}(\mu)
  \ :=\
  \frac{\|P_{\Null(B)}\bar m(\mu)\|_2}{\sigma\,\sqrt{\tilde a(\mu)}}
  \ =\
  \frac{\bigl|\langle u_0,\bar m(\mu)\rangle\bigr|}{\sigma\,\sqrt{\tilde a(\mu)}},
\end{equation}
where the second identity uses that $\Null(B)$ is one-dimensional. We fix a boundary-sensitive regime and vary only $\mu$. In our implementation we focus on the $b=0$ boundary regime (DM-III), taking $(p,a,b)=(0.5,\,1,\,0)$ and $h=\mathbf 1$. We monitor the three scalar summaries $S_+$, $S_-$, and $S_{\log}$ introduced in the S1 study, along with the null-direction statistic
\begin{equation}\label{eq:T-null-mu-scan}
  T_{\mathrm{null}}(t)
  \ :=\
  \bigl\langle K(\phi)^{-1}u_0,\ M^{(t)}-\mu h\bigr\rangle.
\end{equation}
For each statistic and each chain, we compute IACT together with split-$\widehat R$.

For each $\mu$ on the scan grid, we run $4$ overdispersed chains with $T=50{,}000$ iterations, $T_{\mathrm{burn}}=10{,}000$, and thinning${}=1$, initialized at $V^{(0)}=\mathbf 1$, $V^{(0)}=0.1\,\mathbf 1$, $V^{(0)}=10\,\mathbf 1$, and $V^{(0)}\sim \mathrm{GIG}(1,1,1)$. All efficiency measures are computed from the retained post-burn-in draws, with wall-clock time recorded per chain.

\begin{figure}[t]
  \centering
  \includegraphics[width=0.8\textwidth]{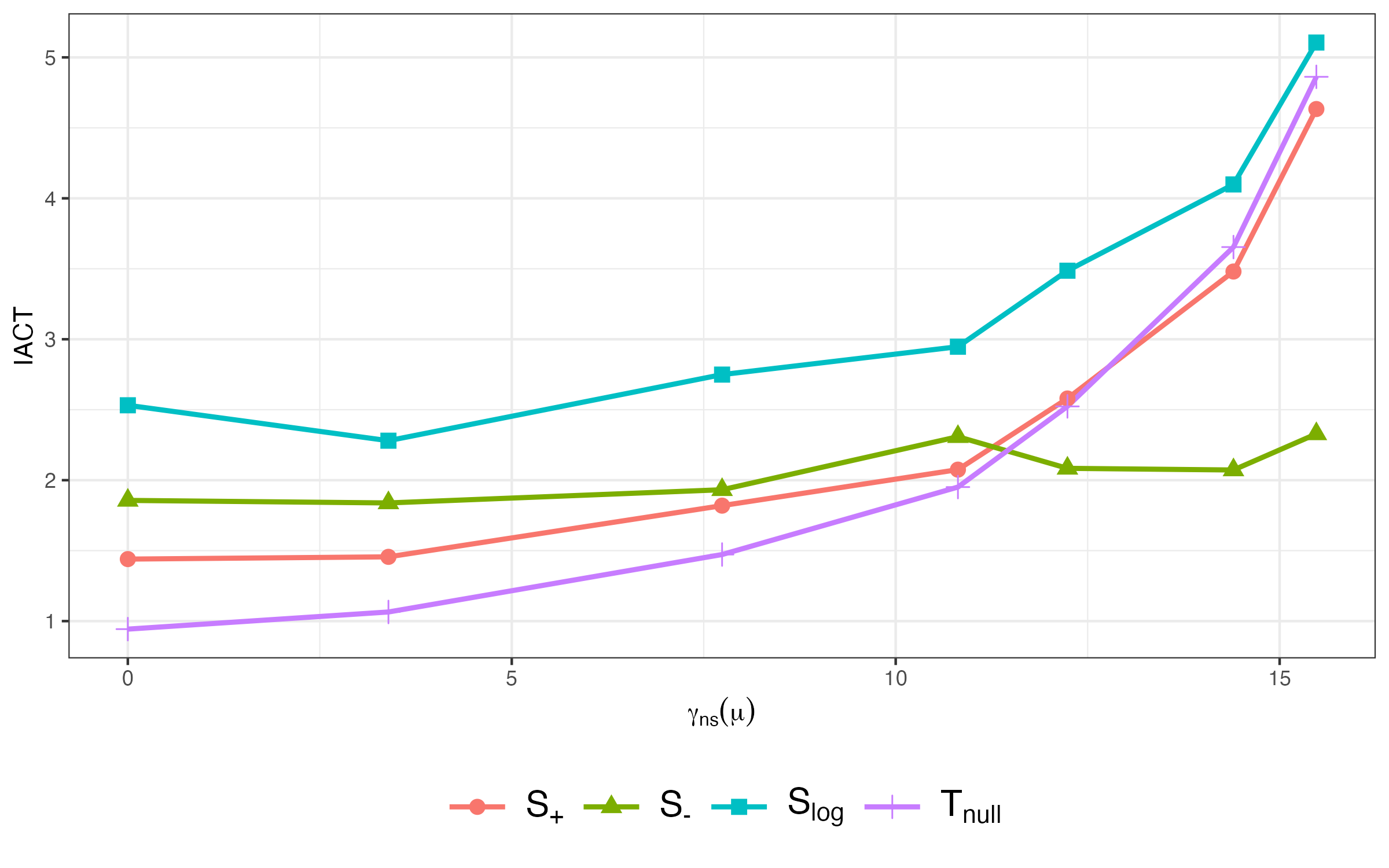}
  \caption{Simulation S2: mixing efficiency along the null-smallness scan induced by varying $\mu$ while keeping $A$ fixed. The figure reports IACT for four monitored summaries: $S_+$ (right-tail), $S_-$ and $S_{\log}$ (boundary-sensitive), and the null-direction statistic $T_{\mathrm{null}}$.}
  \label{fig:sim-S2-ess-iact}
\end{figure}

Across all scan points, split-$\widehat R$ values remain essentially one for all monitored summaries, indicating consistent behavior across the overdispersed chains. Increasing $\gamma_{\mathrm{ns}}(\mu)$ is accompanied by a systematic loss of sampling efficiency, visible as increasing IACT across multiple summaries, despite the fixed global coupling through $B^\top B$. In this scan, the null-direction statistic $T_{\mathrm{null}}(t)$ and the log-summary $S_{\log}(t)$ exhibit the strongest sensitivity to $\gamma_{\mathrm{ns}}(\mu)$, with markedly increasing IACT over the scan range. The negative-moment summary $S_-(t)$ typically shows a milder change, suggesting that the dominant slowdown mechanism here is associated with exploration along the null-sensitive direction encoded by \eqref{eq:T-null-mu-scan}, rather than purely with near-boundary excursions of $V$.

The observed deterioration is consistent with the drift-based mechanism underlying the null-smallness condition: scanning $\mu$ changes both $\bar m(\mu)$ and $\tilde a(\mu)$, thereby altering the normalized projection of $\bar m(\mu)$ onto $\Null(B)$ captured by $\gamma_{\mathrm{ns}}(\mu)$. These results provide numerical evidence that larger values of this constant can be associated with slower mixing for null-sensitive functionals, complementing the conditions established in Section~\ref{chp:proof}.

\section{Discussion}
\label{sec:discussion}

This work develops a coherent theoretical and computational foundation for inference in linear latent non-Gaussian models (LLnGMs), where non-Gaussianity is introduced through variance-mixture augmentations while retaining conditional Gaussian structure. The main contribution is a set of verifiable conditions under which the Markov chain admits strong stability properties (geometric ergodicity), thereby supporting both Monte Carlo estimation and stochastic-gradient descent likelihood optimization.

A central message of the paper is that geometric ergodicity can be established via two complementary mechanisms, each illuminating different aspects of the chain. The trace-class result in Theorem~\ref{thm:trace-class} provides an operator-theoretic certificate of rapid mixing for the marginal chain on the mixing variables. When the associated integral operator is trace-class, one obtains strong spectral regularity and, in particular, geometric ergodicity for the Markov chain induced by the Gibbs updates. Conceptually, this route is global: it relies on compactness/summability properties of the transition operator and requires relatively mild, distribution-level assumptions in regimes where the conditional densities possess sufficient regularization. The drift and minorization approach complements the trace-class argument by targeting regimes not covered by Theorem~\ref{thm:trace-class}. In these cases, geometric ergodicity follows from the construction of a Lyapunov function together with a small-set condition on an appropriate region of state space. This route is local and modular: it makes explicit how different components of the model contribute to stability, and it can be tuned to handle boundary parameter regimes where the mixing distribution is more singular near $0$ or exhibits heavier tails. Practically, the drift argument also provides quantitative insight into which moments must be controlled to ensure contraction. Taken together, these two routes yield a clear proof strategy: apply the trace-class result whenever available for a concise conclusion; otherwise, use drift--minorization to close the remaining boundary cases. This division avoids overly technical assumptions in well-behaved regimes while preserving coverage of practically relevant parameterizations.

Some conditions in the ergodicity analysis are most naturally interpreted through the geometry of the latent operator and the observation operator. In particular, null-space interactions (e.g., $\Null(B)$ and related projectors) determine whether certain directions of the latent field are weakly informed by the data and hence can lead to near-degeneracies in the mixing updates. The null-smallness conditions appearing in our drift analysis formalize the requirement that the observation mechanism injects sufficient information into the latent directions that would otherwise be weakly controlled by the prior operator. In this sense, the assumptions are not merely technical: they encode a concrete identifiability principle. Moreover, the projection-based constants entering the bounds (such as $\|P_{\Null(B)}\mathbf 1\|_2$ and its computable surrogates) provide a way to diagnose problematic regimes in specific discretizations.

More broadly, the proof strategy developed in this paper is not tied to the specific choice of generalized inverse Gaussian (GIG) mixing distributions. Rather, it applies to a wider class of normal mean–variance mixture models, provided that the mixing variables admit suitable control of their asymptotic behavior near the boundary and in the tails. From a technical standpoint, the ergodicity arguments rely only on the existence of upper and lower envelope bounds for the conditional density of the mixing variables that are comparable to linear combinations of GIG-type kernels. In particular, the drift and minorization constructions require explicit control of the leading-order behavior of the mixing density as $V \to 0$ and $V \to \infty$, together with integrability of a finite number of inverse and positive moments.
Consequently, the trace-class and drift-based arguments can be viewed as structural results for Gibbs samplers in conditionally Gaussian latent models with heavy-tailed or skewed marginals, rather than as distribution-specific proofs. This observation suggests that the theoretical framework developed here may be extended to alternative mixture constructions by verifying a small set of asymptotic conditions on the mixing distribution, without revisiting the full operator or drift analysis from first principles. In this sense, the present work provides a reusable blueprint for stability analysis in a much wider class of latent non-Gaussian models.

Geometric ergodicity is not only a qualitative statement about convergence; it underpins the validity and reliability of Monte Carlo estimators used inside likelihood optimization. In particular, the stochastic gradient estimators developed via Fisher's identity and Rao--Blackwellization require integrability under the invariant law together with sufficiently fast mixing of the Gibbs chain. The results of this paper provide these stability guarantees in the parameter regimes of interest, thereby justifying the use of ergodic averages for gradient estimation and supporting the convergence claims of stochastic optimization algorithms built on top of the sampler. From an implementation viewpoint, these results motivate the use of parameterizations and update blocks that preserve the conditional Gaussian structure while improving mixing.
Although centered and non-centered Gibbs kernels target the same posterior distribution, their mixing behavior can differ substantially. The non-centered parameterization can reduce posterior coupling in regimes where the latent field is weakly identified by the data, while the centered parameterization may be preferable when the likelihood is informative and the latent field is tightly constrained. The analysis in this paper emphasizes that the stability properties needed for inference can be established for either formulation, and it clarifies which distributional regimes are naturally handled by operator-theoretic arguments versus drift constructions. This perspective suggests practical guidelines: use the parameterization that empirically reduces autocorrelation, while ensuring that the resulting chain remains within the theoretically covered regime.

The theoretical results are tailored to LLnGMs that preserve conditional Gaussianity through variance-mixture augmentation. Extensions that break conjugacy (e.g., nonlinear observation models or non-Gaussian likelihoods without an auxiliary-variable representation) may require different Markov chain constructions and new stability arguments. In addition, while the drift--minorization approach provides broad coverage, it can yield conservative constants and may not directly translate into sharp non-asymptotic bounds on Monte Carlo error.

\section*{Acknowledgments}

This publication is based upon work supported by King Abdullah University of Science and Technology (KAUST) under Award No. ORFS-CRG11-2022-5015.

\appendix

\section{Modified Bessel functions of the second kind}\label{facts-about-the-modified-bessel-function-of-the-second-kind}

We summarize several useful properties regarding the modified Bessel function of the second kind, $K_{\nu}(x)$. The following asymptotic properties can be found in \citeproc{abramowitz1965}{Abramowitz and Stegun 1965}, page 375:
\begin{align}
    K_{\nu}(x) & = K_{-\nu}(x) \label{eq:bessel-symmetry}   \\
    K_{\nu}(x) & \sim
    \sqrt{\frac{\pi}{2x}} e^{-x}
    \text{  as } x \rightarrow \infty
    \label{eq:bessel-asymp-inf}                             \\
    K_{\nu}(x) & \sim
    \frac{\Gamma(\nu)}{2} \left( \frac{x}{2} \right)^{-\nu}
    \text{ for } \nu > 0 \text{ as } x \rightarrow 0
    \label{eq:bessel-asymp}                                 \\
    K_{0}(x)   & \sim - \log(x) \text{ as } x \rightarrow 0.
    \label{eq:bessel-asymp-nu-0}
\end{align}
In addition, our analysis relies on several ratio inequalities established in \cite{pal2014}.

\begin{proposition}[Modified Bessel function inequality {\cite[Prop.~A2]{pal2014}}]\label{prp:bessel-inequalities-A2}
  Let \(\nu_1, \nu_2 > 0\). Then for arbitrary \(\epsilon^* > 0\) there exists \(\epsilon > 0\) such that
  \begin{align*}
    \frac{K_{\nu_1}(x)}{K_{\nu_2}(x)} \leq (1+\epsilon^*) \frac{\Gamma(\nu_1) 2^{\nu_1 - \nu_2}}{\Gamma(\nu_2) x^{\nu_1 - \nu_2}}
  \end{align*}
  for all \(0 < x < \epsilon\), where \(\epsilon\) depends on \(\epsilon^*\), \(\nu_1\), and \(\nu_2\).
\end{proposition}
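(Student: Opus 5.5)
The plan is a limit argument based on the small-argument asymptotic \eqref{eq:bessel-asymp}. Since $\nu_1,\nu_2>0$, both Bessel functions are governed near the origin by their leading terms, and the ratio of those leading terms is exactly the right-hand side of the claimed inequality without the factor $(1+\epsilon^*)$. Concretely, define
\[
  R(x) := \frac{K_{\nu_1}(x)}{K_{\nu_2}(x)}\cdot \frac{\Gamma(\nu_2)\,x^{\nu_1-\nu_2}}{\Gamma(\nu_1)\,2^{\nu_1-\nu_2}},
  \qquad x>0.
\]
The claim is equivalent to $R(x)\le 1+\epsilon^*$ for $0<x<\epsilon$. It therefore suffices to prove $\lim_{x\downarrow 0}R(x)=1$: then, by the definition of the limit, for any $\epsilon^*>0$ there is an $\epsilon=\epsilon(\epsilon^*,\nu_1,\nu_2)>0$ with $R(x)<1+\epsilon^*$ on $(0,\epsilon)$.

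\emph{Step 1 (factorisation).} Write $R(x)=r_{\nu_1}(x)/r_{\nu_2}(x)$, where
\[
  r_\nu(x) := \frac{K_\nu(x)}{\tfrac{\Gamma(\nu)}{2}\,(x/2)^{-\nu}}.
\]
This is a direct algebraic check: $\tfrac{\Gamma(\nu_1)}{2}(x/2)^{-\nu_1}\big/\tfrac{\Gamma(\nu_2)}{2}(x/2)^{-\nu_2}=\Gamma(\nu_1)2^{\nu_1-\nu_2}/(\Gamma(\nu_2)x^{\nu_1-\nu_2})$.

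\emph{Step 2 (each factor tends to one).} By \eqref{eq:bessel-asymp}, $r_\nu(x)\to 1$ as $x\downarrow 0$ for every $\nu>0$. Since $r_{\nu_2}(x)>0$ and its limit is nonzero, the quotient satisfies $R(x)\to 1$, which finishes the proof.

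The only point needing care is the validity of \eqref{eq:bessel-asymp} itself; I take it as given from Abramowitz and Stegun, as the paper does. If a self-contained argument were wanted, I would use the integral representation
\[
  K_\nu(x)=\tfrac12\int_0^\infty t^{\nu-1}\exp\!\bigl\{-\tfrac{x}{2}(t+t^{-1})\bigr\}\,dt .
\]
Substituting $t=2s/x$ gives
\[
  x^{\nu}K_\nu(x)=2^{\nu-1}\int_0^\infty s^{\nu-1}e^{-s}\,e^{-x^2/(4s)}\,ds .
\]
The factor $e^{-x^2/(4s)}$ increases to $1$ as $x\downarrow 0$, so monotone convergence yields $x^\nu K_\nu(x)\to 2^{\nu-1}\Gamma(\nu)$; this limit is finite precisely because $\nu>0$. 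This is exactly $r_\nu(x)\to1$, and it is the main (though mild) obstacle. The hypothesis $\nu_1,\nu_2>0$ is used only here: for $\nu=0$ the behaviour is logarithmic by \eqref{eq:bessel-asymp-nu-0}, and the stated form would fail.
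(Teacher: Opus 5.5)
Your proof is correct, and it is essentially the argument the paper relies on. The paper itself does not prove this result: it cites Pal and Khare and records the small-argument asymptotic \eqref{eq:bessel-asymp} in the same appendix. That asymptotic gives that the normalized ratio tends to $1$, so the slack $(1+\epsilon^*)$ absorbs the error for small $x$, which is exactly what you do. Your optional self-contained derivation of $x^\nu K_\nu(x)\to 2^{\nu-1}\Gamma(\nu)$ (integral representation, substitution $t=2s/x$, monotone convergence) is also correct and removes the dependence on the cited asymptotic.
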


\begin{proposition}[Modified Bessel function inequality {\cite[Prop.~A3]{pal2014}}]\label{prp:bessel-inequalities-A3}
  Let \(\nu > 0\). Then there exists \(\epsilon' > 0\) such that
  \begin{align*}
    \frac{K_{\nu}(x)}{K_{0}(x)} \leq \frac{2 ^{\frac{1}{4}} \sqrt{\pi}}{  2 \Gamma(\frac{1}{4}) x^{\nu}}
  \end{align*}
  for all \(0 < x < \epsilon'\), where \(\epsilon'\) depends on \(\nu\).
\end{proposition}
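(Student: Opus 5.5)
The plan is to prove the stronger statement that $x^{\nu}K_{\nu}(x)/K_0(x)\to 0$ as $x\downarrow 0$. The explicit constant $c_* := 2^{1/4}\sqrt{\pi}/(2\Gamma(\tfrac14))$ on the right-hand side plays no role beyond being strictly positive. Define
\[
  h(x) := \frac{x^{\nu}K_{\nu}(x)}{K_0(x)},\qquad x>0 .
\]
The claimed inequality is equivalent to $h(x)\le c_*$ on $(0,\epsilon')$. If $\lim_{x\downarrow0}h(x)=0$, then by the definition of the limit there is $\epsilon'>0$, depending on $\nu$ through the asymptotic constants, with $h(x)\le c_*$ for all $0<x<\epsilon'$. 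That is exactly the statement.

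\textbf{Step 1: limit of the numerator.} Since $\nu>0$, \eqref{eq:bessel-asymp} gives $K_{\nu}(x)\sim \tfrac{\Gamma(\nu)}{2}(x/2)^{-\nu}$. Hence
\[
  x^{\nu}K_{\nu}(x)\to \Gamma(\nu)2^{\nu-1}\in(0,\infty).
\]
In particular, the numerator $x^{\nu}K_{\nu}(x)$ is bounded near $0$.

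\textbf{Step 2: behaviour of the denominator.} By \eqref{eq:bessel-asymp-nu-0}, $K_0(x)\sim-\log x\to\infty$. Combining the two steps,
\[
  h(x)\sim \frac{\Gamma(\nu)2^{\nu-1}}{-\log x}\to 0,
\]
which closes the argument.

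The only delicate point is that these asymptotic equivalences are quoted rather than proved. To make the argument self-contained without leaning on them, I would use the integral representation
\[
  K_{\mu}(x)=\int_0^\infty e^{-x\cosh t}\cosh(\mu t)\,dt ,
\]
which yields one-sided bounds directly.
\begin{itemize}
  \item \emph{Upper bound on $K_\nu$.} Use $\cosh(\nu t)\le e^{\nu t}$ and $\cosh t\ge e^t/2$. Substituting $s=e^t$ gives $K_\nu(x)\le \int_1^\infty e^{-xs/2}s^{\nu-1}\,ds\le \Gamma(\nu)(2/x)^{\nu}$.
  \item \emph{Lower bound on $K_0$.} Restrict the integral to $t\in[0,\log(1/x)]$, where $x\cosh t\le 1$. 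This gives $K_0(x)\ge e^{-1}\log(1/x)$ for $x<1$.
\end{itemize}
Together these give $h(x)\le e\,\Gamma(\nu)2^{\nu}/\log(1/x)$. This tends to $0$, and one can take $\epsilon'$ explicitly as
\[
  \epsilon'=\min\Bigl\{1,\ \exp\bigl(-e\,\Gamma(\nu)2^{\nu}/c_*\bigr)\Bigr\}.
\]
I expect no real obstacle. Establishing the one-sided bounds cleanly is the only step requiring care, and the elementary integral estimates above handle it.
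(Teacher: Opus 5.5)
Your proof is correct. The paper gives no proof of this proposition; it simply imports it from \cite{pal2014}.

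Your Steps~1--2 are the natural argument from the asymptotics \eqref{eq:bessel-asymp} and \eqref{eq:bessel-asymp-nu-0} that the paper records in the same appendix. The point is that $x^{\nu}K_\nu(x)$ stays bounded while $K_0(x)$ diverges logarithmically, so $x^\nu K_\nu(x)/K_0(x)\to 0$. Consequently the specific constant $2^{1/4}\sqrt{\pi}/(2\Gamma(\tfrac14))$ could be replaced by any positive number. It was chosen only so that its product with $\kappa(\tfrac12)$ equals $\tfrac12$ in Case~3 of Lemma~\ref{lem:E1V-delta-app}, where the proposition is applied with $\nu=\tfrac14$.

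Your integral-representation route checks out. The bound $K_\nu(x)\le\Gamma(\nu)(2/x)^{\nu}$ follows correctly from $\cosh(\nu t)\le e^{\nu t}$ and $\cosh t\ge e^{t}/2$. The bound $K_0(x)\ge e^{-1}\log(1/x)$ holds for $x<1$, because $x\cosh(\log(1/x))=(1+x^2)/2\le 1$.

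This route buys something the limit argument does not. It gives a fully explicit $\epsilon'$ and the quantitative rate $x^\nu K_\nu(x)/K_0(x)\le e\,\Gamma(\nu)2^{\nu}/\log(1/x)$. It also avoids relying on quoted asymptotic equivalences, whereas the limit argument only guarantees that some $\epsilon'$ exists.
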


\begin{proposition}[Modified Bessel function inequality {\cite[Prop.~A4]{pal2014}}]\label{prp:bessel-inequalities-A4}
  Let \(\nu \geq 0\), \(\delta \in (0, 1)\) and \(x > 0\). Then
  \begin{align*}
    \frac{K_{\nu+\delta}(x)}{K_{\nu}(x)} \leq \frac{(2 \nu + 1)^{\frac{\delta}{2}}}{x^{\frac{\delta}{2}}} + 1.
  \end{align*}
\end{proposition}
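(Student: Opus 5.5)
The plan is to reduce the fractional shift $\nu\mapsto\nu+\delta$ to the integer shift $\nu\mapsto\nu+1$ via log-convexity, and then bound the integer-shift ratio. The first step uses the classical fact that, for fixed $x>0$, the map $\nu\mapsto\log K_\nu(x)$ is convex on $[0,\infty)$. This follows from Hölder's inequality applied to the integral representation
\[
K_\nu(x)=\int_0^\infty e^{-x\cosh t}\cosh(\nu t)\,dt ,
\]
together with the log-convexity of $\nu\mapsto\cosh(\nu t)$. Writing $\nu+\delta=(1-\delta)\nu+\delta(\nu+1)$, convexity yields
\[
\frac{K_{\nu+\delta}(x)}{K_\nu(x)}\le\Bigl(\frac{K_{\nu+1}(x)}{K_\nu(x)}\Bigr)^{\delta}.
\]
This reduces the claim to the following integer-shift bound:
\[
\Bigl(\frac{K_{\nu+1}(x)}{K_\nu(x)}\Bigr)^{\delta}\le\frac{(2\nu+1)^{\delta/2}}{x^{\delta/2}}+1 .
\]

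The second step controls $R_\nu(x):=K_{\nu+1}(x)/K_\nu(x)$. I would start from the recurrence $K_{\nu+1}(x)=K_{\nu-1}(x)+\tfrac{2\nu}{x}K_\nu(x)$ and from the Riccati equation for $R_\nu$, namely
\[
R_\nu'=R_\nu^2-\tfrac{2\nu+1}{x}R_\nu-1 .
\]
A comparison argument on this equation gives an upper envelope for $R_\nu$ of the form $R_\nu(x)\le\tfrac{2\nu+1}{2x}+\sqrt{1+\bigl(\tfrac{2\nu+1}{2x}\bigr)^2}$. The task is then to raise this envelope to the power $\delta$ and compare it with $1+\bigl((2\nu+1)/x\bigr)^{\delta/2}$. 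For $x\ge 2\nu+1$ this comparison is straightforward: the envelope is $1+O\bigl((2\nu+1)/x\bigr)$, and $(1+u)^\delta\le 1+u^{\delta/2}$ holds whenever $u\le 1$ and $\delta\in(0,1)$, since then $u\le u^{\delta/2}$.

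The main obstacle is the regime $x<2\nu+1$, and in particular $x\to0$. There the envelope behaves like $(2\nu+1)/x$, so the reduction only produces a power $x^{-\delta}$. Moreover, the asymptotics \eqref{eq:bessel-asymp} give $K_{\nu+\delta}/K_\nu\sim\tfrac{\Gamma(\nu+\delta)}{\Gamma(\nu)}(2/x)^{\delta}$ for $\nu>0$, which also scales as $x^{-\delta}$. So in this regime I would first recheck the statement against the original source, \cite[Prop.~A4]{pal2014}, including its normalization and the range of $x$ in which the bound is used. I would then verify numerically whether the exponent $\delta/2$ is attainable near $0$, or whether the proof must be restricted to $x$ bounded away from $0$ (for example $x\ge 2\nu+1$, where the bound holds by the previous paragraph). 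If the bound is needed only on that range, the argument closes there. Otherwise this small-$x$ step is exactly where the stated exponent $\delta/2$ would have to be justified, and it is the step I expect to be the obstacle.
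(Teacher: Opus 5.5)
Your proposal does not prove the statement, but the failure lies in the statement rather than in your argument: as transcribed, it is false. Your own small-$x$ computation already proves this, so the hedge (``recheck the source, verify numerically'') is unnecessary. The paper gives no proof here, only the citation \cite[Prop.~A4]{pal2014}, and the transcription pairs a shift $\delta$ on the left with an exponent $\delta/2$ on the right.

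For $\nu>0$, \eqref{eq:bessel-asymp} gives $K_{\nu+\delta}(x)/K_\nu(x)\sim\frac{\Gamma(\nu+\delta)}{\Gamma(\nu)}(2/x)^{\delta}$, while the right-hand side is of order $x^{-\delta/2}$. The left side therefore exceeds the right side for all sufficiently small $x$. For $\nu=0$ the left side is of order $x^{-\delta}/\log(1/x)$, which still wins.

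Concretely, take $\nu=1$, $\delta=\tfrac12$, $x=0.01$. Use $K_{3/2}(x)=\sqrt{\pi/(2x)}\,e^{-x}(1+1/x)$, and $K_1(x)<1/x$ (since $xK_1(x)$ decreases from $1$). Then the left side exceeds $12.5$, whereas the right side is $300^{1/4}+1\approx 5.16$.

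So no argument can close the regime $x<2\nu+1$. Retreating to $x\ge 2\nu+1$ would not serve the paper either, because the proof of Lemma~\ref{lem:E1V-delta-app} applies the bound on $\{x\ge\epsilon\}$ with $\epsilon$ small.

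The true statement has matching exponent, and it is the form the paper actually invokes in Lemma~\ref{lem:E1V-delta-app}: there, shift $\delta(p)/2$ is paired with exponent $\delta(p)/2$, with $2\nu+1=2-2p$ or $1$. It reads
\[
\frac{K_{\nu+\delta}(x)}{K_\nu(x)}\le\Bigl(\frac{2\nu+1}{x}\Bigr)^{\delta}+1,\qquad \nu\ge0,\ \delta\in(0,1),\ x>0 .
\]
Your two steps already prove this for all $x>0$, with no case split. Write $u=(2\nu+1)/x$. Your envelope satisfies $\frac u2+\sqrt{1+\frac{u^2}{4}}\le 1+u$. Log-convexity then gives $K_{\nu+\delta}/K_\nu\le R_\nu^{\delta}\le(1+u)^{\delta}\le 1+u^{\delta}$, the last step by subadditivity of $t\mapsto t^{\delta}$.

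The one step you left as ``a comparison argument'' follows directly from your Riccati equation:
\begin{itemize}
\item The envelope $E(x)$ is the positive root of $R^2-\frac{2\nu+1}{x}R-1$, and it is decreasing in $x$.
\item Since $R_\nu>0$, we have $R_\nu'>0$ exactly when $R_\nu>E$.
\item If $R_\nu(x_0)>E(x_0)$ for some $x_0$, then $R_\nu$ increases while $E$ decreases. This forces $R_\nu(x)\ge R_\nu(x_0)>E(x_0)>1$ for every $x>x_0$.
\item That contradicts $R_\nu(x)\to1$ as $x\to\infty$, which follows from \eqref{eq:bessel-asymp-inf}.
\end{itemize}
Present the counterexample and replace the statement by this corrected version, rather than leaving the small-$x$ regime open.
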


\section{Proofs}\label{app:proofs}

\subsection{Proofs for Section \ref{chp:gibbs}}
\label{app:proofs-model}

\begin{proof}[Proof of Proposition~\ref{prop:conditional-distributions}]
The derivations for both parameterizations rely on the conjugacy of the Gaussian likelihood with respect to the Gaussian process prior, and the fact that the GIG distribution is conjugate for the variance components in a Gaussian variance-mixture model.
Starting with the centered parameterization for $W\mid V,Y$, the joint distribution of the observations and the latent process is given by $Y\mid W,V \sim N(X\beta+AW, \sigma_\epsilon^2 I)$ and $KW\mid V \sim N(\mu(V-h), \sigma^2 D_V)$. The conditional density $p(W\mid V,Y)$ is proportional to the product of their respective kernels:
\begin{align*}
p(W\mid V,Y) &\propto \exp\biggl\{ -\frac{1}{2\sigma_\epsilon^2}\|Y-X\beta-AW\|^2 \\
&\qquad - \frac{1}{2\sigma^2}(KW-\mu(V-h))^\top D_V^{-1}(KW-\mu(V-h)) \biggr\}.
\end{align*}
Expanding the quadratic forms and collecting terms involving $W$ yields a Gaussian kernel $\exp\{-\frac{1}{2}(W^\top Q W - 2 W^\top r)\}$, where the precision matrix $Q$ and the potential vector $r$ are defined as
\[
Q = \sigma^{-2}K^\top D_V^{-1}K + \sigma_\epsilon^{-2}A^\top A, \quad r = \sigma^{-2}K^\top D_V^{-1}\mu(V-h) + \sigma_\epsilon^{-2}A^\top(Y-X\beta).
\]
Completing the square leads to the conditional distribution $W\mid V,Y \sim N(Q^{-1}r, Q^{-1})$, confirming \eqref{eq:centered-conditional}.

For the non-centered parameterization $M\mid V,Y$, we consider the transformation $W = K^{-1}(M-\mu h)$. Defining $B = AK^{-1}$, the likelihood for $Y$ simplifies to $N(X\beta - \mu Bh + BM, \sigma_\epsilon^2 I)$, while the prior for $M$ is $N(\mu V, \sigma^2 D_V)$. The conditional density $p(M\mid V,Y)$ thus follows:
\begin{align*}
p(M\mid V,Y) &\propto \exp\biggl\{ -\frac{1}{2\sigma_\epsilon^2}\|Y-X\beta+\mu Bh-BM\|^2 \\
&\qquad - \frac{1}{2\sigma^2}(M-\mu V)^\top D_V^{-1}(M-\mu V) \biggr\}.
\end{align*}
Grouping the terms in $M$ again results in a Gaussian kernel with
\[
Q = \sigma_\epsilon^{-2}B^\top B + \sigma^{-2}D_V^{-1}, \quad r = \sigma_\epsilon^{-2}B^\top(Y-X\beta+\mu Bh) + \sigma^{-2}D_V^{-1}\mu V.
\]
Utilizing the identity $D_V^{-1}V = \mathbf{1}$, the linear term simplifies to $r = \sigma_\epsilon^{-2}B^\top(Y-X\beta+\mu Bh) + (\mu/\sigma^2)\mathbf{1}$, which matches \eqref{eq:non-centered-conditional}.

Regarding the update for the variance components $V$, note that in both cases $Y$ is conditionally independent of $V$ once the latent process is given. For the centered case $V\mid W$, the independence of $(V_i)_i$ allows us to factorize the density as $p(V\mid W) \propto \prod_i p(V_i) p(z_i \mid V_i)$, where $z = KW$. For each $i$, the kernel is
\begin{align*}
p(V_i \mid W) &\propto V_i^{p-1} e^{-\frac{1}{2}(aV_i + b/V_i)} \cdot V_i^{-1/2} \exp\left\{-\frac{(z_i + \mu h_i - \mu V_i)^2}{2\sigma^2 V_i}\right\}.
\end{align*}
Expanding the exponent and retaining only the terms dependent on $V_i$ yields
\[
p(V_i \mid W) \propto V_i^{p-1/2-1} \exp\left\{ -\frac{1}{2} \left[ \left(a + \frac{\mu^2}{\sigma^2}\right)V_i + \left(b + \frac{(z_i + \mu h_i)^2}{\sigma^2}\right)\frac{1}{V_i} \right] \right\},
\]
which is the kernel of the $\mathrm{GIG}(p-1/2, a + \mu^2/\sigma^2, b + (z_i + \mu h_i)^2/\sigma^2)$ distribution. 

Finally, for the non-centered case $V\mid M$, a similar expansion of the prior $M_i \mid V_i \sim N(\mu V_i, \sigma^2 V_i)$ combined with the GIG prior on $V_i$ gives
\begin{align*}
p(V_i \mid M) \propto V_i^{p-1/2-1} \exp\left\{ -\frac{1}{2} \left[ \left(a + \frac{\mu^2}{\sigma^2}\right)V_i + \left(b + \frac{M_i^2}{\sigma^2}\right)\frac{1}{V_i} \right] \right\},
\end{align*}
confirming the expression in \eqref{eq:V_given_M} and completing the proof.
\end{proof}

\subsection{Proofs for Section \ref{chp:estimation}}
\label{app:proofs-estimation}

\begin{proof}[Proof of Lemma~\ref{lem:L1_threshold_centered}]
Let $Z = KW$ and $S_i = V_i^{-1} h_i (Z_i + \mu h_i)$. To simplify the exposition, we fix the index $i$ and drop it where possible, writing $v, h, z, w$ for $V_i, h_i, Z_i, W_i$ respectively. Throughout the proof, we use lower-case letters to denote the variables of integration. By Bayes' theorem, the posterior expectation of $|S_i|$ is given by
\[
\mathbb{E}[|S_i| \mid y] = \frac{1}{\pi(y)} \int \frac{|h| \, |z + \mu h|}{v} p(y \mid w) p(z, v) \, dz \, dv.
\]
\smallskip\noindent
We first show that for  $\alpha>\tfrac12$ the integral is finite.
Since the Gaussian likelihood is globally bounded,
$
p(y\mid w)\le L_{\max}:=(2\pi\sigma_\epsilon^2)^{-n/2},
$
we obtain
$
\mathbb E[|S_i|\mid y]\le \frac{L_{\max}}{\pi(y)}\,\mathbb E\!\left[\frac{|h|\,|z+\mu h|}{V}\right].
$
Define $T:=z_i+\mu h$, we have $T\mid v\sim \mathcal N(\mu v,\sigma^2 v)$, hence
$\mathbb E(|T|\mid v)\le c_1\sqrt v + |\mu| v$ with $c_1:=\sigma\sqrt{2/\pi}$, and therefore
\[
\mathbb E\!\left[\frac{|h|\,|T|}{v}\right]
\le |h|\int_0^\infty (c_1 v^{-1/2}+|\mu|)\,p_V(v)\,dv,
\quad p_V(v)\propto v^{\alpha-1}e^{-\beta v}.
\]
Near $0$ the integrand behaves like $v^{-1/2}v^{\alpha-1}=v^{\alpha-3/2}$, so this integral is finite iff
$\alpha>\tfrac12$.

Now we show that the expectation is divergent for $\alpha\le\tfrac12$.
Write  $\tilde z:=z_{-i}$. We want to find a set where we can bound the various terms of the integral from below  $\E(|S_i|\mid y)$. 
Let $z^\star:=-\mu h\mathbf 1$ and choose $\eta>0$. Define the cube
\[
Q:=\{z\in\R^n:\ \|z-z^\star\|_\infty\le \eta\},
\qquad
Q_{-i}:=\{\tilde z\in\R^{n-1}:\ \|\tilde z-\tilde z^\star\|_\infty\le \eta\}.
\]
Set $B:=K^{-1}Q$. Since $w\mapsto p(y\mid w)$ is continuous and strictly positive, there exists a
$
L_{\min}:=\inf_{w\in B}p(y\mid w)>0.
$
Thus, for all $z\in Q$ we have $p(y\mid K^{-1}z)\ge L_{\min}$, and hence
\begin{align*}
\E[|S_i|\mid y]
&=\frac{1}{\pi(y)}\int \frac{|h|\,|z_i+\mu h|}{v}\,p(y\mid K^{-1}z)\,p(z,v)\,dz\,dv\\
&\ge \frac{L_{\min}}{\pi(y)} \int_Q \int \frac{|h|\,|z_i+\mu h|}{v}\,p(z,v)\,dz\,dv.
\end{align*}

Fix $b>0$ and define
$
I(v):=[\,b\sigma\sqrt v,\ 2b\sigma\sqrt v\,].
$
 Choose $\varepsilon=b/4$ and pick $\delta>0$ such that
$
2b\sigma\sqrt{\delta}\le \eta$ and 
$\frac{|\mu|}{\sigma}\sqrt{\delta}\le \varepsilon$ (to be used later).
Then for $v\in(0,\delta)$, $t\in I(v)$ and $\tilde z\in Q_{-i}$ we have
$(z_i^\star+t,\tilde z)\in Q$.
We thus obtain
\begin{align*}
 \int_Q \int \frac{|h|\,|z_i+\mu h|}{v}&\,p(z,v)\,dz\,dv \\
&\ge \int_0^\delta\int_{Q_{-i}}\int_{t\in I(v)}
\frac{|h|\,t}{v}\,p_{Z_i\mid V_i}(z_i^\star+t\mid v)\,p_V(v)\,
\prod_{j\neq i} p_{Z_j}(z_j)\,dt\,d\tilde z\,dv\\
&= c_{-i}\int_0^\delta
\left[\int_{t\in I(v)}\frac{|h|\,t}{v}\,p_{Z_i\mid V_i}(z_i^\star+t\mid v)\,dt\right]p_V(v)\,dv,
\end{align*}
where
$
c_{-i}:=\int_{Q_{-i}}\prod_{j\neq i} p_{Z_j}(z_j)\,d\tilde z\ >0.
$

For $t\in I(v)$ we have $t/v\ge b\sigma v^{-1/2}$. Hence for all small $v$,
\[
\begin{split}
\int_0^\delta
&\left[\int_{t\in I(v)}\frac{|h|\,t}{v}\,p_{Z_i\mid V_i}(z_i^\star+t\mid v)\,dt\right]p_V(v)\,dv 
\\ &\ge\ 
\int_0^\delta b \sigma |h|\,v^{-1/2}
\left[\int_{t\in I(v)} p_{Z_i\mid V_i}(z_i^\star+t\mid v)\,dt\right]p_V(v)\,dv.
\end{split}
\]

Let $s(v):=\frac{\mu}{\sigma}\sqrt v$, since $\frac{|\mu|}{\sigma}\sqrt{\delta}\le \varepsilon$ we have
 $|s(v)|\le \varepsilon$ for all $v\in(0,\delta)$.
Let $f(s):=\Phi(2b-s)-\Phi(b-s)$, and define
$
p_0:=\min_{|s|\le \varepsilon} f(s)>0
$
which gives
\begin{align*}
\int_{t\in I(v)} p_{Z_i\mid V_i}(z_i^\star+t\mid v)\,dt
&=\int_{b\sigma\sqrt v}^{2b\sigma\sqrt v} 
\frac{1}{\sqrt{2\pi\sigma^2 v}}
\exp\!\left(-\frac{(t-\mu v)^2}{2\sigma^2 v}\right)\,dt \\
&= f(s(v))\ \ge\ p_0, \forall v\in(0,\delta).
\end{align*}
Therefore
\[
\E[|S_i|\mid y]\ \ge\ c\int_0^\delta v^{-1/2}\,p_V(v)\,dv
\ \asymp\ \int_0^\delta v^{-1/2}v^{\alpha-1}\,dv
=\int_0^\delta v^{\alpha-3/2}\,dv=\infty\qquad(\alpha\le\tfrac12),
\]
as claimed.
\end{proof}

\subsection{Proofs for Section \ref{sec:proof-trace-class}}

\begin{proof}[Proof of Lemma~\ref{lem:quadratic_form_lemma}]
  First, we establish a useful matrix identity. Since $A$ and $B$ are positive definite, they are invertible, and their sum $A+B$ and the sum of their inverses $A^{-1}+B^{-1}$ are also positive definite (and therefore invertible). We can write
  \[
    A^{-1} + B^{-1} = A^{-1}(A+B)B^{-1}.
  \]
  Taking the inverse of both sides yields the identity
  \begin{equation}\label{eq:matrix_identity}
    (A^{-1} + B^{-1})^{-1} = B(A+B)^{-1}A,
  \end{equation}
  which by symmetry is also equal to $A(A+B)^{-1}B$.
  Using the representation $I = (A+B)(A+B)^{-1}$, we can rewrite the term involving $A$ as
  \begin{align*}
    A - (A^{-1} + B^{-1})^{-1} & = A - A(A+B)^{-1}B                 \\
    & = A(A+B)^{-1}(A+B) - A(A+B)^{-1}B 
 = A(A+B)^{-1}A.
  \end{align*}
  Similarly, for the term involving $B$, we have $B - (A^{-1} + B^{-1})^{-1} = B(A+B)^{-1}B$.
  Now, let us expand the expression on the left-hand side of the lemma. Let $H = (A^{-1}+B^{-1})^{-1}$. We have
  \begin{equation*}
    \begin{split}
        u^{\top}Au + v^{\top}Bv - (u+v)^{\top}H(u+v)                 
       & = u^{\top}Au + v^{\top}Bv - (u^{\top}Hu + v^{\top}Hv + 2u^{\top}Hv) \\
       & = u^{\top}(A-H)u + v^{\top}(B-H)v - 2u^{\top}Hv = (I).
    \end{split}
  \end{equation*}
  Substituting the identities derived above into this expression:
  \begin{equation*}
    \begin{split}
       (I) & = u^{\top}\big(A(A+B)^{-1}A\big)u + v^{\top}\big(B(A+B)^{-1}B\big)v - 2u^{\top}\big(A(A+B)^{-1}B\big)v \\
       & = (Au)^{\top}(A+B)^{-1}(Au) + (Bv)^{\top}(A+B)^{-1}(Bv) - 2(Au)^{\top}(A+B)^{-1}(Bv)                   \\
       & = (Au - Bv)^{\top}(A+B)^{-1}(Au - Bv).
    \end{split}
  \end{equation*}
  This proves the equality. Since $A+B$ is positive definite, the quadratic form on the last line is non-negative, and the desired inequality follows.
\end{proof}

\begin{proof}[Proof of Lemma~\ref{lem:K_bound}]
  Define
  \[
    f(x):=K_p(x)\,x^{|p|}(1+x)^{\frac12-|p|}e^{x},\qquad x>0.
  \]
  Since $K_p(x)>0$ for $x>0$, we have $f(x)>0$ for all $x>0$.
  Moreover, $f$ is continuous on $(0,\infty)$ because $K_p$ is continuous on $(0,\infty)$.
  Using the standard asymptotics of $K_p$ (see Appendix~\ref{facts-about-the-modified-bessel-function-of-the-second-kind}):
  \begin{itemize}
    \item As $x\to\infty$, $K_p(x)\sim \sqrt{\frac{\pi}{2x}}e^{-x}$, hence
          \[
            f(x)\to \sqrt{\frac{\pi}{2}}\cdot \lim_{x\to\infty}
            \frac{x^{|p|}(1+x)^{\frac12-|p|}}{\sqrt{x}}
            =\sqrt{\frac{\pi}{2}}>0.
          \]
    \item As $x\to0^+$, if $p\neq 0$ then $K_p(x)\sim 2^{|p|-1}\Gamma(|p|)\,x^{-|p|}$, hence
            $f(x)\to 2^{|p|-1}\Gamma(|p|)>0$,
          while if $p=0$ then $K_0(x)\to\infty$, so $f(x)\to\infty$.
  \end{itemize}

  Therefore, there exist $\varepsilon\in(0,1)$ and $M>1$ such that
  \[
    f(x)\ge c_0>0\quad \text{for all }x\in(0,\varepsilon]\cup[M,\infty).
  \]
  On the remaining compact interval $[\varepsilon,M]$, continuity implies that $f$ attains its minimum:
  \[
    m:=\min_{x\in[\varepsilon,M]} f(x)>0,
  \]
  since $f>0$ everywhere. Hence
  \[
    \inf_{x>0} f(x)\ \ge\ \min\{c_0,m\}\ >\ 0.
  \]
  Let $C:=\min\{c_0,m\}$ (or any $0<C<\inf_{x>0}f(x)$ if you insist on a strict inequality).
  Then for all $x>0$, $f(x)\ge C$, which is equivalent to
    $K_p(x)\ \ge\ C\,x^{-|p|}(1+x)^{-\frac12+|p|}e^{-x}$.
\end{proof}

\begin{proof}[Proof of Lemma~\ref{lem:normal_moment_bound}]
  Let $\varphi$ denote the density of the distribution
  $N(A^{-1}b, A^{-1})$.
  Then the claim is equivalent to showing that
  \begin{equation}\label{eq:moment_bound_phi}
    \int \prod_{i=1}^n (\gamma + x_i^2)^p \, \varphi(x)\,dx
    \le
    C \prod_{i=1}^n
    \Big(
    \gamma^p + [A^{-1}]_{ii}^p
    \big(1 + \sum_{j=1}^n [A^{-1}]_{jj}^p\big)
    \Big),
  \end{equation}
  where $C = C(p,n,\|b\|)$.
  Let $k > np$ be an integer. By Hölder's inequality,
  \[
    \int \prod_{i=1}^n (\gamma + x_i^2)^p \varphi(x)\,dx
    \le
    \prod_{i=1}^n
    \left(
    \int (\gamma + x_i^2)^{np} \varphi(x)\,dx
    \right)^{1/n}.
  \]
  Since $k > np$, the monotonicity of $L^r$ norms yields
  \[
    \int (\gamma + x_i^2)^{np} \varphi(x)\,dx
    \le
    \left(
    \int (\gamma + x_i^2)^k \varphi(x)\,dx
    \right)^{np/k}.
  \]
  Combining the two inequalities,
  \[
    \int \prod_{i=1}^n (\gamma + x_i^2)^p \varphi(x)\,dx
    \le
    \prod_{i=1}^n
    \left(
    \int (\gamma + x_i^2)^k \varphi(x)\,dx
    \right)^{p/k}.
  \]
  Let $m = A^{-1}b$.
  Using the inequality
    $x_i^2 \le 2 m_i^2 + 2(x_i - m_i)^2$,
  we obtain
  \[
    (\gamma + x_i^2)^k
    \le
    (\gamma + 2m_i^2 + 2(x_i-m_i)^2)^k
    \le
    3^{k-1}\big(
    \gamma^k + 2^k m_i^{2k} + 2^k (x_i-m_i)^{2k}
    \big).
  \]
  Taking expectations with respect to $\varphi$ and absorbing constants into $C$,
  \[
    \int (\gamma + x_i^2)^k \varphi(x)\,dx
    \le
    C\big(
    \gamma^k + |m_i|^{2k} + [A^{-1}]_{ii}^k
    \big),
  \]
  since $\mathbb{E}_\varphi[(x_i-m_i)^{2k}] = C_k [A^{-1}]_{ii}^k$.
  Raising both sides to the power $p/k$ and using that $p/k \le 1$, we obtain
  \[
    \left(
    \int (\gamma + x_i^2)^k \varphi(x)\,dx
    \right)^{p/k}
    \le
    C\big(
    \gamma^p + |m_i|^{2p} + [A^{-1}]_{ii}^p
    \big).
  \]
  Finally, since $m = A^{-1}b$ and $b$ is fixed, we have
  \[
    |m_i|
    \le
    \|b\| \sum_{j=1}^n |[A^{-1}]_{ij}|
    \le
    \|b\| [A^{-1}]_{ii}^{1/2}
    \sum_{j=1}^n [A^{-1}]_{jj}^{1/2},
  \]
  which implies
  \[
    |m_i|^{2p}
    \le
    C(\|b\|)\,[A^{-1}]_{ii}^p
    \sum_{j=1}^n [A^{-1}]_{jj}^p.
  \]
  Substituting this bound into \eqref{eq:moment_bound_phi} yields the desired result.
\end{proof}

\subsection{Proofs for Section \ref{sec:geometric-ergodicity-proof}}

\begin{proof}[Proof of Lemma~\ref{lem:not-trace-class}]
In this setting, the marginal data augmentation algorithm corresponds to the operator $\Lambda$ defined by the joint density $\pi(V, M)$. A necessary condition for $\Lambda$ to be trace class is the finiteness of the integral
  \[
    T := \int \int \pi(V_i \mid M_i) \pi(M_i \mid V_i) \, dV_i \, dM_i.
  \]
  We show that this integral diverges.
  
  Under the assumption $a=\mu=0$, we have $p<0$. Let $\alpha := -p > 0$ and $\beta := b/2 > 0$. The prior on $V_i$ is $V_i \sim \text{Inv-Gamma}(\alpha, \beta)$.
  When $B=0$, the conditional distributions reduce to:
  \begin{align*}
    M_i \mid V_i & \sim N(0, V_i),                                                                  \\
    V_i \mid M_i & \sim \text{Inv-Gamma}\left(\alpha + \frac{1}{2}, \beta + \frac{M_i^2}{2}\right).
  \end{align*}
  The corresponding density functions are
  \begin{align*}
    \pi(M_i \mid V_i) &= \frac{1}{\sqrt{2\pi V_i}} \exp\left(-\frac{M_i^2}{2V_i}\right),\\
    \pi(V_i \mid M_i) &= \frac{(\beta + M_i^2/2)^{\alpha+1/2}}{\Gamma(\alpha+1/2)} V_i^{-\alpha-3/2} \exp\left(-\frac{\beta + M_i^2/2}{V_i}\right).
  \end{align*}
  Substituting these into the expression for $T$ and applying Fubini's theorem to integrate with respect to $V_i$ first:
  \begin{align*}
    T & = \int_{-\infty}^\infty \frac{(\beta + M_i^2/2)^{\alpha+1/2}}{\sqrt{2\pi}\,\Gamma(\alpha+1/2)} \left[ \int_0^\infty V_i^{-\alpha-2} \exp\left(-\frac{\beta + M_i^2/2}{V_i}\right) \, dV_i \right] dM_i.
  \end{align*}
  The inner integral is related to the unnormalized Inverse-Gamma density. Using the identity $\int_0^\infty x^{-(k+1)} e^{-C/x} dx = \Gamma(k) C^{-k}$ with $k = \alpha+1$ and $C = \beta + M_i^2/2$, we obtain:
  \[
    \int_0^\infty V_i^{-(\alpha+1)-1} \exp\left(-\frac{\beta + M_i^2/2}{V_i}\right) \, dV_i
    = \Gamma(\alpha+1) \left(\beta + \frac{M_i^2}{2}\right)^{-(\alpha+1)}.
  \]
  Substituting this back into the expression for $T$:
  \begin{align*}
    T & = \frac{\Gamma(\alpha+1)}{\sqrt{2\pi}\,\Gamma(\alpha+1/2)} \int_{-\infty}^\infty \left(\beta + \frac{M_i^2}{2}\right)^{\alpha+1/2} \left(\beta + \frac{M_i^2}{2}\right)^{-(\alpha+1)} \, dM_i \\
      & = \frac{\Gamma(\alpha+1)}{\sqrt{2\pi}\,\Gamma(\alpha+1/2)} \int_{-\infty}^\infty \left(\beta + \frac{M_i^2}{2}\right)^{-1/2} \, dM_i.
  \end{align*}
  As $|M_i| \to \infty$, the integrand behaves like $(\frac{M_i^2}{2})^{-1/2} \propto \frac{1}{|M_i|}$.
  Since $\int_{1}^\infty \frac{1}{x} dx$ diverges logarithmically, the integral $T$ is infinite. Thus, the operator is not trace class.
\end{proof}

The following two lemmas provide auxiliary results needed for the proof of Lemma~\ref{lem:eta-range-bound}. To state them, we need a few definitions. Let $A \in \mathbb{R}^{n \times n}$. The index of a row is its order, that is the row $[a_{i,j}]_{j \in \{1,\ldots,n\}}$ has index $i$; the index is defined similarly for columns. Let $S_r \subseteq \{1, 2, \ldots, n\}^r$ be the set of strictly increasing sequences of length $r$ with elements taking value in the set $\{1, \ldots, n\}$. For $r \in \{0, \ldots, n\}$, $\omega, \varepsilon \in S_r$, and a matrix $A \in \mathbb{R}^{n \times n}$, define $A(\omega|\varepsilon)$ to be the matrix constructed by removing the rows whose index appear in the sequence $\omega$ and removing columns whose index appear in $\varepsilon$. Define $A[\omega|\varepsilon]$ to be the matrix obtained by removing the rows whose indices do not appear in $\omega$ and columns whose indices do not appear in $\varepsilon$.

\begin{lemma}[Cauchy-Binet formula, Equation (1) in \cite{marcus1990}]
  \label{lem:cauchy-binet}
  Let $A, B \in \mathbb{R}^{n,n}$. We have
  \[
    \det(A + B) = \sum_{r=0}^{n} \sum_{\omega,\varepsilon \in S_r} (-1)^{s(\omega)+s(\varepsilon)} \det(A[\omega|\varepsilon]) \det(B(\omega|\varepsilon)).
  \]
\end{lemma}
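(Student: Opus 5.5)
The plan is to prove the formula by combining multilinearity of the determinant in its rows with the generalized Laplace expansion along a fixed set of rows. Throughout, $s(\omega):=\sum_{k=1}^r \omega_k$ for $\omega=(\omega_1<\dots<\omega_r)\in S_r$. The determinant of an empty ($0\times 0$) matrix is taken to be $1$, so the $r=0$ term is $\det B$ and the $r=n$ term is $\det A$.

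\emph{Step 1 (row multilinearity).} Write the $i$-th row of $A+B$ as $a_i+b_i$, where $a_i,b_i$ are the $i$-th rows of $A$ and $B$. Since $\det$ is linear in each row separately, expanding all $n$ rows gives
\[
  \det(A+B)=\sum_{\omega} \det C_\omega ,
\]
where the sum runs over all subsets $\omega\subseteq\{1,\dots,n\}$, each listed increasingly as an element of some $S_r$. Here $C_\omega$ is the matrix whose $i$-th row is $a_i$ if $i\in\omega$ and $b_i$ otherwise.

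\emph{Step 2 (Laplace expansion along the rows $\omega$).} For fixed $\omega\in S_r$, apply the generalized Laplace expansion of $\det C_\omega$ along the rows indexed by $\omega$:
\[
  \det C_\omega=\sum_{\varepsilon\in S_r}(-1)^{s(\omega)+s(\varepsilon)}\det\bigl(C_\omega[\omega|\varepsilon]\bigr)\det\bigl(C_\omega(\omega|\varepsilon)\bigr).
\]
The rows of $C_\omega$ indexed by $\omega$ are those of $A$, so $C_\omega[\omega|\varepsilon]=A[\omega|\varepsilon]$. The remaining rows are those of $B$, so $C_\omega(\omega|\varepsilon)=B(\omega|\varepsilon)$. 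Substituting into Step 1 and grouping the subsets $\omega$ by their cardinality $r$ yields exactly the claimed double sum.

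\emph{Step 3 (the sign; main obstacle).} The only delicate point is justifying the generalized Laplace expansion with the sign $(-1)^{s(\omega)+s(\varepsilon)}$. I would derive it from the Leibniz formula.

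First, permute the rows of $C_\omega$ so that rows $\omega_1,\dots,\omega_r$ come first, keeping the relative order of the remaining rows. Moving row $\omega_k$ up into position $k$ takes $\omega_k-k$ adjacent transpositions, so the total sign is $(-1)^{\sum_k(\omega_k-k)}$. Doing the same for the columns indexed by $\varepsilon$ contributes $(-1)^{\sum_k(\varepsilon_k-k)}$. The two factors $(-1)^{\sum_k k}$ cancel, leaving $(-1)^{s(\omega)+s(\varepsilon)}$.

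Second, in the permuted matrix, split the Leibniz sum according to the set of columns $\varepsilon$ onto which the first $r$ rows are mapped. For a fixed $\varepsilon$, the permutations factor as a permutation of $\{1,\dots,r\}$ times a permutation of the complementary positions, and the sign is multiplicative under this factorization. Hence each group contributes the product of the top-left $r\times r$ minor and the complementary bottom-right minor.

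Undoing the row and column permutations gives the expansion used in Step 2. The cases $r=0$ and $r=n$ are consistent with the empty-determinant convention, since then the sign factor is $(-1)^{0}$ or $(-1)^{2s}$, both equal to $1$.
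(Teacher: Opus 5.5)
Your proof is correct. Expanding $\det(A+B)$ by row multilinearity into $\sum_{\omega}\det C_\omega$ and then applying the generalized Laplace expansion of $\det C_\omega$ along the rows indexed by $\omega$ produces exactly the stated double sum. Your Leibniz-based derivation of the sign $(-1)^{s(\omega)+s(\varepsilon)}$, with $s(\omega)=\sum_k\omega_k$, is sound; the paper leaves this convention for $s$ unstated, and you supplied it correctly. One point of presentation: the column permutation must be fixed separately within each block of the Leibniz sum indexed by $\varepsilon$, which your Step 3 does only implicitly. The paper gives no proof of its own and simply cites \cite{marcus1990}, so your argument is the standard derivation of the cited identity and makes the lemma self-contained rather than taking a different route.
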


\begin{lemma}[Cauchy-Schwarz inequality for minors]
  \label{lem:minor-inequality}
  Let $A \in \mathbb{R}^{n \times n}$ be a symmetric non-negative definite matrix and let $i, j$ be distinct integers in $\{1, \ldots, n\}$. We have
  \[
    \det(A(\omega \cup \{i\}|\omega \cup \{i\})) \det(A(\omega \cup \{j\}|\omega \cup \{j\}))
    \ge \det(A(\omega \cup \{i\}|\omega \cup \{j\}))^2.
  \]
\end{lemma}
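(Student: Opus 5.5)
We prove the inequality by reducing it to a $2\times 2$ determinant of minors of the inverse. Write $P:=\omega\cup\{i,j\}$ for the index set that is removed entirely (assuming $i,j\notin\omega$, the only nontrivial case). Let $N:=A(P\mid P)$ be the principal submatrix on the complementary indices $R:=\{1,\dots,n\}\setminus P$. The three determinants in the statement are minors of the principal submatrix
\[
  \hat A := A(\omega\mid\omega),
\]
the matrix on $R\cup\{i,j\}$, obtained by deleting one row and one column. Concretely, $\det A(\omega\cup\{i\}\mid\omega\cup\{i\})$ is the $(i,i)$ cofactor of $\hat A$, $\det A(\omega\cup\{j\}\mid\omega\cup\{j\})$ is the $(j,j)$ cofactor, and $\det A(\omega\cup\{i\}\mid\omega\cup\{j\})$ is, up to sign, the $(i,j)$ cofactor. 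Since $\hat A$ is a principal submatrix of a symmetric nonnegative definite matrix, it is itself symmetric nonnegative definite. It therefore suffices to prove the claim for a symmetric PSD matrix $\hat A$ in the form
\[
  C_{ii}C_{jj}\ \ge\ C_{ij}^2,
\]
where $C$ is the cofactor matrix of $\hat A$. The sign is irrelevant because the right-hand side is squared.

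\textbf{Positive definite case.} If $\hat A\succ 0$, then $C=\det(\hat A)\,\hat A^{-1}$ by the adjugate formula. The matrix $\hat A^{-1}$ is symmetric positive definite, so its $2\times 2$ principal submatrix on $\{i,j\}$ is positive definite and has nonnegative determinant:
\[
  (\hat A^{-1})_{ii}(\hat A^{-1})_{jj}-(\hat A^{-1})_{ij}^2\ \ge\ 0.
\]
Multiplying by $\det(\hat A)^2>0$ gives $C_{ii}C_{jj}\ge C_{ij}^2$. An equivalent route is the Sylvester/Desnanot--Jacobi identity
\[
  C_{ii}C_{jj}-C_{ij}C_{ji}=\det(\hat A)\,\det N ,
\]
whose right-hand side is $\ge 0$ because both $\hat A$ and $N$ are PSD. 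This identity has the advantage of being polynomial and valid without invertibility.

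\textbf{General PSD case.} If $\hat A$ is only PSD, apply the positive definite case to $\hat A+\epsilon I\succ 0$ and let $\epsilon\downarrow 0$. Every minor is a polynomial in the entries, hence continuous, so the inequality survives the limit. Alternatively, the Desnanot--Jacobi identity already covers singular matrices directly, so no limiting argument is needed.

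The only real obstacle is bookkeeping: identifying precisely which entries of the adjugate correspond to the three minors in the statement's $A(\cdot\mid\cdot)$ notation, and checking that the cross term is the symmetric off-diagonal cofactor, so that $C_{ij}=C_{ji}$ and its sign drops out after squaring. Symmetry of $\hat A$ guarantees $C_{ij}=C_{ji}$, and after that the argument is immediate.
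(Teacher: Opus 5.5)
Your proof is correct and is essentially the paper's argument: both pass to the principal submatrix $A(\omega\mid\omega)$, read the three minors as (signed) entries of its adjugate, and use that this adjugate is symmetric nonnegative definite (via $\det(\hat A+\theta I)(\hat A+\theta I)^{-1}$ with $\theta\downarrow 0$), so its $2\times 2$ principal minor on $\{i,j\}$ is nonnegative. Your Desnanot--Jacobi variant is a limit-free alternative that also identifies the gap exactly as $\det(\hat A)\det N$.
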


\begin{proof}
  Because $A$ is symmetric non-negative definite, so is $A(\omega|\omega)$. Now, the desired inequality is a consequence of the fact that the matrix
  \[
    \mathrm{adj}(A(\omega|\omega)) = \lim_{\theta \to 0^+} \det(A(\omega|\omega) + \theta I)(A(\omega|\omega) + \theta I)^{-1}
  \]
  is a symmetric non-negative definite matrix.
\end{proof}

\begin{proof}[Proof of Lemma~\ref{lem:eta-range-bound}]
  Notice that $\mathrm{Ker}(B^\top B) = \mathrm{Ker}(B)$, and consequently
  \[
    \mathrm{Range}(B^\top B) = \mathrm{Ker}(B^\top B)^\perp = \mathrm{Ker}(B)^\perp = \mathrm{Range}(B^\top).
  \]
  Denote by $\mathcal{B}$ the matrix $B^\top B$ and let $B_1, \ldots, B_n$ denote the columns of the matrix $\mathcal{B}$.
  It is enough to prove the claim for $\bar m = B_j$ for every $j \in \{1, \ldots, n\}$.
  Our goal is to prove that $[(\mathcal{B} + D_V^{-1})^{-1} B_j]_i$ is bounded.

  By Cramer's formula, we have
  \begin{equation}\label{eq:cramer}
    [(\mathcal{B} + D_V^{-1})^{-1} B_j]_i
    = \frac{\det(E_i + D_{V,i}^{-1})}{\det(\mathcal{B} + D_V^{-1})},
  \end{equation}
  where the matrix $E_i$ is defined by replacing the $i$-th column of $\mathcal{B}$ by $B_j$ and $D_{V,i}$ is defined by replacing the $i$-th diagonal element in $D_V$ by $0$.

  By subtracting the $j$-th column of $E_i + D_{V,i}^{-1}$ from its $i$-th column we have
  \[
    \det(E_i + D_{V,i}^{-1}) = \det(F_i + D_{V,i,j}^{-1}),
  \]
  where $F_i$ is the matrix $\mathcal{B}$ with the $i$-th column replaced with $0$ and $D_{V,i,j}$ is the matrix $D_V$ with the $i$-th column replaced with negative the $j$-th column of $D_V$.

  By Lemma~\ref{lem:cauchy-binet}, we have
  \begin{align*}
    \det(F_i + D_{V,i,j}^{-1})
     & = \sum_{r} \sum_{\omega,\varepsilon \in S_r} (-1)^{s(\omega)+s(\varepsilon)} \det(F_i[\omega|\varepsilon]) \det(D_{V,i,j}^{-1}(\omega|\varepsilon)) \\
     & = \sum_{r} \sum_{\substack{\omega \in S_{r-1}                                                                                                       \\ \omega \not\ni i,j}} \det(F_i(\omega \cup \{j\}|\omega \cup \{i\})) \vartheta_\omega \prod_{k \in \omega \cup \{j\}} V_k^{-1} \\
     & = \sum_{r} \sum_{\substack{\omega \in S_{r-1}                                                                                                       \\ \omega \not\ni i,j}} \det(\mathcal{B}(\omega \cup \{j\}|\omega \cup \{i\})) \vartheta_\omega \prod_{k \in \omega \cup \{j\}} V_k^{-1},
  \end{align*}
  where $\vartheta_\omega = \pm 1$. The previous equality follows by studying the cases of whether $i, j$ belong to $\omega, \varepsilon \in S_r$.

  By Lemma~\ref{lem:minor-inequality} we have
  \begin{equation}\label{eq:numerator-bound}
    \begin{split}
      |\det(F_i + D_{V,i,j}^{-1})|
      & \le \max_{\varepsilon \not\ni i} \sqrt{\det(\mathcal{B}(\varepsilon \cup \{i\}|\varepsilon \cup \{i\}))} \\
      & \quad \times \sum_{r} \sum_{\substack{\omega \in S_{r-1} \\ \omega \not\ni i,j}}
      \sqrt{\det(\mathcal{B}(\omega \cup \{j\}|\omega \cup \{j\}))} \prod_{k \in \omega \cup \{j\}} V_k^{-1}.
    \end{split}
  \end{equation}

  On the other hand by Lemma~\ref{lem:cauchy-binet} again we have
  \begin{equation}\label{eq:denominator}
    \det(\mathcal{B} + D_V^{-1}) = \sum_r \sum_{\omega \in S_r} \det(\mathcal{B}(\omega|\omega)) \prod_{k \in \omega} V_k^{-1}.
  \end{equation}

  To conclude, observe that both the numerator in \eqref{eq:numerator-bound} and the denominator in \eqref{eq:denominator} are polynomials in $\{V_k^{-1}\}_k$ with the same degree. Moreover, all polynomial terms appearing in \eqref{eq:numerator-bound} also appear in \eqref{eq:denominator} with nonzero coefficients. Further, since $\mathcal{B} = B^\top B$ is positive semidefinite, the coefficients in \eqref{eq:denominator} are determinants of principal minors of $\mathcal{B}$ and are therefore nonnegative. Consequently, the denominator has no positive roots (it is strictly positive for all $V \in (0,\infty)^n$). Thus, the quantity \eqref{eq:cramer}, being the ratio of two such polynomials, is bounded uniformly over $V \in (0,\infty)^n$.
\end{proof}

\begin{proof}[Proof of Lemma~\ref{lem:nullproj-1-via-A}]
  Since $K$ is invertible, 
  $$\Null(B)=\{x:AK^{-1}x=0\}=K\,\Null(A)=\Range(KU_A).$$
  Let $W:=K U_A$. Then $W$ has full column rank and $\Range(W)=\Null(B)$.
  The Euclidean orthogonal projector onto $\Range(W)$ is
    $P_{\Range(W)}=W(W^\top W)^{-1}W^\top$.
  Therefore
  \[
    P_{\Null(B)}=P_{\Range(W)}
    =K U_A\,(U_A^\top K^\top K U_A)^{-1}U_A^\top K^\top
    =K U_A\,G^{-1}U_A^\top K^\top.
  \]
  Taking quadratic forms with $\mathbf 1$ yields
  \[
    \mathbf 1^\top P_{\Null(B)}\mathbf 1
    =\mathbf 1^\top K U_A\,G^{-1}U_A^\top K^\top \mathbf 1
    =z^\top G^{-1}z,
  \]
  which proves \eqref{eq:nullproj-identity}. If $\bar m=\rho B^\top(Y-X\beta+\mu Bh)+\mu\mathbf 1$, then
  $P_{\Null(B)}B^\top=0$ implies $\bar m_0=P_{\Null(B)}\bar m=\mu P_{\Null(B)}\mathbf 1$, and
  \eqref{eq:m0-norm-identity} follows immediately.
\end{proof}

\begin{proof}[Proof of Lemma~\ref{lem:GIG-negmoment-unif}]
  Fix $q\in(0,\tfrac12]$.
  Write $\tilde V:=\tilde V_i$ and $M:=M_i$ for simplicity.
  Conditioned on $M$, we have
  \[
    \tilde V\mid M \sim \mathrm{GIG}\!\left(\lambda,\psi,\chi\right),
    \qquad
    \lambda:=p-\tfrac12,\ \psi:=\tilde a>0,\ \chi:=b+\left(\tfrac{M}{\sigma}\right)^2\ge b>0 .
  \]
  Let $x:=\sqrt{\psi\chi}$.
  A standard moment identity for the GIG distribution (valid for all real $r$ when $\psi>0$ and $\chi>0$) is
  \begin{equation}\label{eq:GIG-moment}
    \mathbb E\!\left[\tilde V^{\,r}\mid M\right]
    =\left(\frac{\chi}{\psi}\right)^{r/2}\frac{K_{\lambda+r}(x)}{K_{\lambda}(x)},
    \qquad r\in\mathbb R,
  \end{equation}
  where $K_\nu(\cdot)$ is the modified Bessel function of the second kind.

  Taking $r=-q$ in \eqref{eq:GIG-moment} gives
  \[
    \mathbb E\!\left[\tilde V^{-q}\mid M\right]
    =\left(\frac{\chi}{\psi}\right)^{-q/2}\frac{K_{\lambda-q}(x)}{K_{\lambda}(x)}.
  \]

  Step 1: control the prefactor uniformly in $M$.
  Since $\chi\ge b$ and $q>0$, the map $\chi\mapsto \chi^{-q/2}$ is decreasing, hence
  \[
    \left(\frac{\chi}{\psi}\right)^{-q/2}
    \le \left(\frac{b}{\psi}\right)^{-q/2}.
  \]

  Step 2: control the Bessel ratio uniformly in $M$.
  Note that $x=\sqrt{\psi\chi}\ge \sqrt{\psi b}=:x_0>0$.
  Define the continuous positive function
  \[
    R(x):=\frac{K_{\lambda-q}(x)}{K_{\lambda}(x)},\qquad x>0.
  \]
  Because $K_\nu(x)>0$ for all $x>0$ and any $\nu\in\mathbb R$, $R$ is well-defined and continuous on $(0,\infty)$.
  Moreover, using the standard large-$x$ asymptotic $K_\nu(x)\sim \sqrt{\pi/(2x)}e^{-x}$,
  we have $R(x)\to 1$ as $x\to\infty$.
  Therefore, $R$ is bounded on $[x_0,\infty)$:
  indeed, choose $X>x_0$ such that $R(x)\le 2$ for all $x\ge X$,
  and set
  \[
    C_K:=\max\Big\{\sup_{x\in[x_0,X]}R(x),\ 2\Big\}<\infty.
  \]
  Hence, for all $M\in\mathbb R$,
  \[
    \frac{K_{\lambda-q}(x)}{K_{\lambda}(x)} \le C_K .
  \]

  Combining Steps 1--2 yields
  \[
    \mathbb E\!\left[\tilde V^{-q}\mid M\right]
    \le
    \left(\frac{b}{\psi}\right)^{-q/2} C_K
    {:}=L_-(p,\tilde a,b,q)<\infty,
  \]
  uniformly over $M\in\mathbb R$.
  The statement for $\sum_{i=1}^n\mathbb E[\tilde V_i^{-q}\mid V]$ follows by summing the bound.
\end{proof}

\begin{proof}[Proof of Lemma~\ref{lem:EV-linear-allp}]
  Write $\tilde V:=\tilde V_i$ and $M:=M_i$.
  As in the previous proof,
  \[
    \tilde V\mid M \sim \mathrm{GIG}(\lambda,\psi,\chi),
    \quad
    \lambda:=p-\tfrac12,\ \psi:=\tilde a>0,\ \chi:=b+\left(\tfrac{M}{\sigma}\right)^2,\
    x:=\sqrt{\psi\chi}.
  \]
  Applying the GIG moment formula \eqref{eq:GIG-moment} with $r=1$ gives
  \[
    \mathbb E[\tilde V\mid M]
    =\left(\frac{\chi}{\psi}\right)^{1/2}\frac{K_{\lambda+1}(x)}{K_{\lambda}(x)}
    =\frac{\sqrt{\chi}}{\sqrt{\psi}}\cdot \frac{K_{\lambda+1}(x)}{K_{\lambda}(x)}.
  \]

  We use a standard inequality for modified Bessel functions (a consequence of continued-fraction / Riccati analysis of the ratio;
  see, e.g., Amos-type bounds in the special-function literature):
  there exists a constant $C_R(\lambda)<\infty$ such that for all $x>0$,
  \begin{equation}\label{eq:Kratio-linear}
    \frac{K_{\lambda+1}(x)}{K_{\lambda}(x)} \le 1 + \frac{C_R(\lambda)}{x}.
  \end{equation}
  (One may take, for instance, $C_R(\lambda)=2|\lambda|+1$.)

  Using \eqref{eq:Kratio-linear} and $x=\sqrt{\psi\chi}$, we obtain
  \[
    \mathbb E[\tilde V\mid M]
    \le \frac{\sqrt{\chi}}{\sqrt{\psi}}
    +\frac{\sqrt{\chi}}{\sqrt{\psi}}\cdot \frac{C_R(\lambda)}{\sqrt{\psi\chi}}
    =
    \frac{\sqrt{\chi}}{\sqrt{\psi}} + \frac{C_R(\lambda)}{\psi}.
  \]

  Since $\chi=b+(M/\sigma)^2$,
  \[
    \sqrt{\chi}\le \sqrt b + \left|\frac{M}{\sigma}\right|.
  \]
  Therefore
  \[
    \mathbb E[\tilde V\mid M]
    \le
    \frac{1}{\sqrt{\psi}}\left|\frac{M}{\sigma}\right|
    +\frac{\sqrt b}{\sqrt{\psi}}
    +\frac{C_R(\lambda)}{\psi}.
  \]
  Recalling $\psi=\tilde a$ and $\lambda=p-\tfrac12$, this is exactly
  \[
    \mathbb E[\tilde V_i\mid M_i]
    \le
    \frac{1}{\sqrt{\tilde a}}\Bigl|\frac{M_i}{\sigma}\Bigr|
    +\frac{\sqrt b}{\sqrt{\tilde a}}
    +\frac{C_{\mathrm{lin}}(p,\tilde a)}{\tilde a},
  \]
  with $C_{\mathrm{lin}}(p,\tilde a):=C_R(p-\tfrac12)$.
\end{proof}

\begin{proof}[Proof of Lemma~\ref{lem:E1V-delta-app}]
  The proof follows similar arguments as in \cite{pal2014}.
  Let
  \[
    \tilde a:=a+\frac{\mu^2}{\sigma^2},\qquad \tilde b:=\frac{M_i^2}{\sigma^2},
    \qquad x:=\sqrt{\tilde a\tilde b}=\frac{\sqrt{\tilde a}\,|M_i|}{\sigma}.
  \]
  Below we assume $\tilde b>0$ (i.e. $M_i\neq 0$); the bound then holds $\pi(M_i|V,Y)-a.s.$.
  Using the standard moment identity for the GIG distribution,
  \begin{align*}
    E\!\left[V_i^{-\frac{\delta(p)}{2}}\mid M_i\right]
     & =\int_0^\infty v^{-\frac{\delta(p)}{2}}
    \left(\frac{\sqrt{\tilde a}}{\sqrt{\tilde b}}\right)^{p-\frac12}
    v^{\left(p-\frac12\right)-1} \\
     & \quad \times \exp\!\left\{-\frac12\left(\tilde a v+\frac{\tilde b}{v}\right)\right\}
    \frac{1}{2K_{p-\frac12}\!\left(\sqrt{\tilde a\tilde b}\right)}\,dv             \\
     & =\left(\frac{\sqrt{\tilde a}}{\sqrt{\tilde b}}\right)^{\frac{\delta(p)}{2}}
    \frac{K_{p-\frac12-\frac{\delta(p)}{2}}\!\left(\sqrt{\tilde a\tilde b}\right)}
    {K_{p-\frac12}\!\left(\sqrt{\tilde a\tilde b}\right)}.
  \end{align*}

  We consider three cases: $p<\frac12$, $p>\frac12$, and $p=\frac12$.

  \paragraph*{Case 1: $p<\frac12$}
  By $K_\nu(x)=K_{-\nu}(x)$,
  \[
    E\!\left[V_i^{-\frac{\delta(p)}{2}}\mid M_i\right]
    =\left(\frac{\sqrt{\tilde a}}{\sqrt{\tilde b}}\right)^{\frac{\delta(p)}{2}}
    \frac{K_{\frac12-p+\frac{\delta(p)}{2}}\!\left(\sqrt{\tilde a\tilde b}\right)}
    {K_{\frac12-p}\!\left(\sqrt{\tilde a\tilde b}\right)}.
  \]
  Apply Proposition~\ref{prp:bessel-inequalities-A2} with
  $\nu_1=\frac12+\frac{\delta(p)}{2}-p$, $\nu_2=\frac12-p$ and $x=\sqrt{\tilde a\tilde b}$:
  for any $\epsilon^*>0$ there exists $\epsilon=\epsilon(\epsilon^*,p)$ such that, for $x<\epsilon$,
  \[
    \frac{K_{\frac12-p+\frac{\delta(p)}{2}}(x)}{K_{\frac12-p}(x)}
    \le (1+\epsilon^*)\,
    \frac{\Gamma\!\left(\frac12+\frac{\delta(p)}{2}-p\right)2^{\frac{\delta(p)}{2}}}
    {\Gamma\!\left(\frac12-p\right)\,x^{\frac{\delta(p)}{2}}}.
  \]
  Apply Proposition~\ref{prp:bessel-inequalities-A4} (same $\nu=\frac12-p$) to bound the ratio when $x\ge\epsilon$:
  \[
    \frac{K_{\frac12-p+\frac{\delta(p)}{2}}(x)}{K_{\frac12-p}(x)}
    \le \frac{(2-2p)^{\frac{\delta(p)}{2}}}{x^{\frac{\delta(p)}{2}}}+1.
  \]
  Combining and using $x^{-\frac{\delta(p)}{2}}=(\tilde a\tilde b)^{-\frac{\delta(p)}{4}}$, we obtain
  \begin{align*}
    E\!\left[V_i^{-\frac{\delta(p)}{2}}\mid M_i\right]
     & \le (1+\epsilon^*)\,
    \frac{\Gamma\!\left(\frac12+\frac{\delta(p)}{2}-p\right)2^{\frac{\delta(p)}{2}}}
    {\Gamma\!\left(\frac12-p\right)}\,\tilde b^{-\frac{\delta(p)}{2}}
    + \left(\frac{\tilde a}{\epsilon}\right)^{\frac{\delta(p)}{2}}
    \left(\frac{(2-2p)^{\frac{\delta(p)}{2}}}{\epsilon^{\frac{\delta(p)}{2}}}+1\right) \\
     & = (1+\epsilon^*)\,
    \frac{\Gamma\!\left(\frac12+\frac{\delta(p)}{2}-p\right)2^{\frac{\delta(p)}{2}}}
    {\Gamma\!\left(\frac12-p\right)}\,\Bigl|\frac{M_i}{\sigma}\Bigr|^{-\delta(p)}
    + C_{2,1}\!\left(p,\tilde a\right),
  \end{align*}
  for an explicit finite $C_{2,1}(p,\tilde a)$.

  Moreover,
  \begin{align*}
    (1+\epsilon^*)\frac{\Gamma(\frac12+\frac{\delta(p)}{2}-p)2^{\frac{\delta(p)}{2}}}{\Gamma(\frac12-p)}\,\kappa(p)
     & =(1+\epsilon^*)\frac{\bigl[\Gamma(\frac{1-p}{2})\bigr]^2}{\Gamma(\frac12-p)\Gamma(\frac12)}
    \le (1+\epsilon^*)\frac{1-2p}{1-2p+\frac{p^2}{2}},
  \end{align*}
  where the last inequality follows from \cite{gurland1956}. Hence choose
  $\epsilon^*<\frac{p^2}{2(1-2p)}$, so that $C_1(p)\kappa(p)<1$ in this case.

  \paragraph*{Case 2: $p>\frac12$}
  Since $\delta(p)=\min\{\frac12,\,2p-1\}$, we have $\delta(p)\le 2p-1$, hence
  \[
    p-\frac12-\frac{\delta(p)}{2}\ge p-\frac12-\frac{2p-1}{2}=0,
    \qquad
    p-\frac12-\frac{\delta(p)}{2}\le p-\frac12.
  \]
  Thus $0\le p-\frac12-\frac{\delta(p)}{2}\le p-\frac12$.
  Using $K_{-\nu}(x)=K_\nu(x)$ and that $\nu\mapsto K_\nu(x)$ is increasing on $\nu\ge0$ for each $x>0$,
  \[
    \frac{K_{p-\frac12-\frac{\delta(p)}{2}}(x)}{K_{p-\frac12}(x)}\le 1.
  \]
  Therefore,
  \begin{align*}
    E\!\left[V_i^{-\frac{\delta(p)}{2}}\mid M_i\right]
     & \le \left(\frac{\sqrt{\tilde a}}{\sqrt{\tilde b}}\right)^{\frac{\delta(p)}{2}}
    \le \frac{1}{2\kappa(p)}\,\tilde b^{-\frac{\delta(p)}{2}}+\frac{\kappa(p)}{2}\,\tilde a^{\frac{\delta(p)}{2}}
    \quad\text{(Young)}                                                               \\
     & =\frac{1}{2\kappa(p)}\Bigl|\frac{M_i}{\sigma}\Bigr|^{-\delta(p)}
    +\frac{\kappa(p)}{2}\,\tilde a^{\frac{\delta(p)}{2}},
  \end{align*}
  so here $C_1(p)=1/(2\kappa(p))$ and $C_1(p)\kappa(p)=1/2<1$.

  \paragraph*{Case 3: $p=\frac12$}
  Here $\delta(p)=\frac12$. By Propositions~\ref{prp:bessel-inequalities-A3} and \ref{prp:bessel-inequalities-A4},
  there exists $\epsilon'>0$ such that, for $x=\sqrt{\tilde a\tilde b}$,
  \[
    \frac{K_{\frac{\delta(p)}{2}}(x)}{K_0(x)}
    \le \frac{2^{\frac14}\sqrt{\pi}}{2\Gamma(\frac14)\,x^{\frac{\delta(p)}{2}}}\,I_{(0<x<\epsilon')}
    +\left(\frac{1}{x^{\frac{\delta(p)}{2}}}+1\right)I_{(x\ge\epsilon')}.
  \]
  Hence
  \begin{align*}
    E\!\left[V_i^{-\frac{\delta(p)}{2}}\mid M_i\right]
     & =\left(\frac{\sqrt{\tilde a}}{\sqrt{\tilde b}}\right)^{\frac{\delta(p)}{2}}
    \frac{K_{\frac{\delta(p)}{2}}(x)}{K_0(x)}                                              \\
     & \le \frac{2^{\frac14}\sqrt{\pi}}{2\Gamma(\frac14)}\,\tilde b^{-\frac{\delta(p)}{2}}
    + C_{2,3}\!\left(p,\tilde a\right)
    = \frac{2^{\frac14}\sqrt{\pi}}{2\Gamma(\frac14)}\Bigl|\frac{M_i}{\sigma}\Bigr|^{-\delta(p)}+C_{2,3}(p,\tilde a),
  \end{align*}
  and one finds
  \[
    \frac{2^{\frac14}\sqrt{\pi}}{2\Gamma(\frac14)}\,\kappa\!\left(\tfrac12\right)=\frac12<1.
  \]

  \paragraph*{Conclusion}
  Combining the three cases yields
  \[
    E\!\left[V_i^{-\frac{\delta(p)}{2}}\mid M_i\right]
    \le C_1(p)\Bigl|\frac{M_i}{\sigma}\Bigr|^{-\delta(p)}+C_2(p,\tilde a),
  \]
  with
  \[
    C_1(p)= (1+\epsilon^*)\frac{\Gamma(\frac12+\frac{\delta(p)}{2}-p)2^{\frac{\delta(p)}{2}}}{\Gamma(\frac12-p)}\,I_{(p<\frac12)}
    +\frac{1}{2\kappa(p)}\,I_{(p\ge\frac12)},
  \]
  and a finite $C_2(p,\tilde a)$ collecting the corresponding constants in each case.
  Moreover, $C_1(p)\kappa(p)<1$ for all $p>0$ by the arguments above.
\end{proof}

\begin{proof}[Proof of Lemma~\ref{lem:l1-duality-nosqrtn}]
  By $\ell_\infty$-duality, we have
  \[
    \|\bar{Q}(V)^{-1}u\|_1 = \sup_{\|s\|_\infty\le 1} s^\top \bar{Q}(V)^{-1}u.
  \]
  For any such $s$, applying the Cauchy--Schwarz inequality in the $\bar{Q}(V)^{-1}$-inner product yields
  \[
    s^\top \bar{Q}(V)^{-1}u
    \le \sqrt{s^\top \bar{Q}(V)^{-1}s}\,\sqrt{u^\top \bar{Q}(V)^{-1}u}.
  \]
  Since $\rho B^\top B \succeq 0$, we have $\bar{Q}(V) \succeq D_V^{-1}$. This implies $\bar{Q}(V)^{-1} \preceq D_V$. Consequently,
  \[
    s^\top \bar{Q}(V)^{-1}s \le s^\top D_V s = \sum_{i=1}^n V_i s_i^2 \le \sum_{i=1}^n V_i,
  \]
  where the last inequality holds because $|s_i|\le 1$. Taking the supremum over $\|s\|_\infty\le 1$ yields \eqref{eq:l1-duality-main}.
\end{proof}

\section{Explicit Gradient Derivations}\label{app:gradients}

In this appendix, we provide detailed derivations of the gradient estimators for the joint log-likelihood function of the LLnGM model described in Section~\ref{sec:model}. We explicitly compare the formulations under the centered and non-centered parameterizations to highlight the integrability issues inherent in the former.

\subsection{Parameterization and Distributions}

Following \cite{wallin2015} and \cite{podgorski2016}, ensuring that the variance process belongs to a class of distributions closed under convolution limits us to two special cases of the Generalized Hyperbolic (GH) distribution: the Normal-Inverse Gaussian (NIG) and the Generalized Asymmetric Laplace (GAL) distributions. To avoid over-parameterization of $B$ in \eqref{eq:mixture}, we adopt the parameterization from \cite{asar2020}. Specifically, let $\nu > 0$ be the variance component parameter: For NIG noise, $V_i \sim \mathrm{IG}(\nu, \nu h_i^2)$, and for GAL noise $V_i \sim \mathrm{Gamma}(h_i \nu, \nu)$.

\subsection{Centered Parameterization}

Let $Y = (y_1, \dots, y_m)^\top$, $W = (w_1, \dots, w_n)^\top$, and $\zeta$ denote the vector of kernel parameters, and let $\theta$ denote the full parameter vector (including $\zeta$, $\mu$, $\sigma$, etc.). The joint log-likelihood is given by
\[
  \log p_\theta(Y, W, V) = \log p_\theta(Y \mid W) + \log p_\theta(W \mid V) + \log p_\theta(V),
\]
where the components are:
\begin{align*}
  \log p_\theta(Y \mid W) & = -\frac{m}{2} \log(2\pi) - m \log \sigma_{\epsilon} - \frac{1}{2\sigma_{\epsilon}^2} \|Y - X \beta - A W \|^2, \\
  \log p_\theta(W \mid V)
                          & = -\frac{n}{2}\log(2\pi) - n\log\sigma
  + \log|K|
  -\frac12\sum_{i=1}^n\log V_i                                                                                                              \\
                          & \quad -\frac{1}{2\sigma^2}(KW-\mu(V-h))^\top D_V^{-1}(KW-\mu(V-h)),
\end{align*}
with $D_V = \mathrm{diag}(V)$. The term $\log p_\theta(V)$ depends on the mixing distribution:
\begin{itemize}
  \item For $V_i \sim \mathrm{IG}(\nu, \nu h_i^2)$:
        \[
          \log p_\theta(V) = \sum_{i=1}^n \left( \frac{1}{2} \log \left( \frac{\nu h_i^2}{2 \pi V_i^3} \right) - \frac{\nu V_i^2 + \nu h_i^2}{2 V_i} + \nu h_i \right).
        \]
  \item For $V_i \sim \mathrm{Gamma}(h_i \nu, \nu)$:
        \[
          \log p_\theta(V) = \sum_{i=1}^n \left( h_i \nu \log(\nu) - \log \Gamma(h_i \nu) + (h_i \nu - 1)\log V_i - \nu V_i \right).
        \]
\end{itemize}

We now derive the gradients of the joint log-likelihood $\mathcal{L} = \log p_\theta(Y, W, V)$ with respect to each parameter.

\paragraph*{Regression coefficients $\beta$}
The gradient with respect to the regression coefficients is
\[
  \nabla_{\beta} \mathcal{L} = \frac{1}{\sigma_{\epsilon}^2} X^\top (Y - X \beta - A W).
\]

\paragraph*{Noise variance $\sigma_{\epsilon}$}
Differentiating with respect to the observation noise variance yields
\[
  \partial_{\sigma_{\epsilon}} \mathcal{L} = -\frac{m}{\sigma_{\epsilon}} + \frac{1}{\sigma_{\epsilon}^3} \|Y - X \beta - A W \|^2.
\]

\paragraph*{Process scale $\sigma$}
Expanding the quadratic term $((KW)_i - \mu V_i + \mu h_i)^2 / V_i$, we obtain:
\begin{align*}
  \partial_\sigma \mathcal{L}
   & = -\frac{n}{\sigma} + \frac{1}{\sigma^3} \sum_{i=1}^n \frac{((KW)_i - \mu (V_i - h_i))^2}{V_i} \\
   & = -\frac{n}{\sigma} + \frac{1}{\sigma^3} \sum_{i=1}^n \left[
    \frac{(KW)_i^2}{V_i} + \mu^2 V_i + \frac{\mu^2 h_i^2}{V_i} - 2\mu(KW)_i + \frac{2\mu h_i (KW)_i}{V_i} - 2\mu^2 h_i
    \right].
\end{align*}
Note that this expression contains the explicit term $+\mu^2 h_i^2/V_i$. While the likelihood term may suppress the singularity at $V_i \to 0$, verifying the posterior integrability of this score function is technically delicate, especially for the GAL case with small shape parameters ($h_i \nu \le 1$) where the prior inverse moment $\mathbb{E}[V_i^{-1}]$ does not exist.

\paragraph*{Drift parameter $\mu$}
The gradient with respect to the drift parameter is
\begin{align*}
  \partial_\mu \mathcal{L}
   & = \frac{1}{\sigma^2} \sum_{i=1}^n \frac{((KW)_i - \mu V_i + \mu h_i)(V_i - h_i)}{V_i} \\
   & = \frac{1}{\sigma^2} \sum_{i=1}^n \left[
    (KW)_i - \mu V_i + 2\mu h_i - \frac{h_i (KW)_i}{V_i} - \frac{\mu h_i^2}{V_i}
    \right].
\end{align*}
Similar to $\partial_\sigma \mathcal{L}$, the presence of terms scaling with $1/V_i$ (such as $-\mu h_i^2/V_i$) poses challenges for establishing $L^1$ convergence guarantees using standard drift conditions.

\paragraph*{Kernel parameters $\zeta_j$}
For each kernel parameter $\zeta_j$, the gradient is given by
\[
  \partial_{\zeta_j} \mathcal{L} = \mathrm{tr}(K^{-1} \partial_{\zeta_j} K) - \frac{1}{\sigma^2} (KW - \mu(V-h))^\top D_V^{-1} (\partial_{\zeta_j} K) W.
\]

\paragraph*{Mixing parameter $\nu$}
The gradient with respect to the mixing parameter depends on the choice of distribution:
\begin{equation}
\partial_\nu \mathcal{L} =
\begin{cases}
\sum_{i=1}^n \left( \frac{1}{2\nu} + h_i - \frac{V_i}{2} - \frac{h_i^2}{2 V_i} \right) & \text{for NIG,}\\
\sum_{i=1}^n \left( h_i \log(\nu) + h_i - h_i \psi(h_i \nu) + h_i \log V_i - V_i \right) & \text{for GAL,}
\end{cases}
\end{equation}
where $\psi(\cdot)$ is the digamma function.

\subsection{Non-Centered Parameterization}

In the non-centered parameterization, we define $M = KW + \mu h$, so that $W = K^{-1}(M-\mu h)$. The likelihood components become:
\begin{align*}
  \log p_\theta(Y \mid M) & = -\frac{m}{2} \log(2\pi) - m \log \sigma_{\epsilon} - \frac{1}{2\sigma_{\epsilon}^2} \|Y - X \beta - A K^{-1} (M - \mu h) \|^2, \\
  \log p_\theta(M \mid V) & = -\frac{n}{2} \log(2\pi) - n \log \sigma - \frac{1}{2} \sum_{i=1}^n \log V_i
  - \frac{1}{2 \sigma^2} \sum_{i=1}^n \frac{(M_i - \mu V_i)^2}{V_i}.
\end{align*}
The gradients for this formulation are derived analogously.

\paragraph*{Regression coefficients $\beta$}
\[
  \nabla_{\beta} \log p_\theta = \frac{1}{\sigma_{\epsilon}^2} X^\top \left(Y - X \beta - A K^{-1} (M - \mu h)\right).
\]

\paragraph*{Noise variance $\sigma_{\epsilon}$}
\[
  \partial_{\sigma_{\epsilon}} \log p_\theta = -\frac{m}{\sigma_{\epsilon}} + \frac{1}{\sigma_{\epsilon}^3} \|Y - X \beta - A K^{-1} (M - \mu h) \|^2.
\]

\paragraph*{Process scale $\sigma$}
\[
  \partial_\sigma \log p_\theta = -\frac{n}{\sigma} + \frac{1}{\sigma^3} \sum_{i=1}^n \frac{(M_i - \mu V_i )^2}{V_i}.
\]
Here the term $(M_i-\mu V_i)^2/V_i$ is a self-normalized quadratic form. Under the Gaussian layer $M\mid V$, it equals $\sigma^2 Z_i^2$ with $Z_i\sim N(0,1)$, and more generally it remains well-behaved near $V_i\to 0$ when working with the conditional Gaussian distribution $M\mid(V,Y)$. This structure makes moment bounds more transparent under the non-centered parametrization.

\paragraph*{Drift parameter $\mu$}
Using the chain rule on both $\log p(Y|M)$ and $\log p(M|V)$:
\begin{align*}
  \partial_\mu \log p_\theta
  = \frac{1}{\sigma^2}\sum_{i=1}^n (M_i-\mu V_i)
  + \frac{1}{\sigma_\epsilon^2}(A K^{-1} h)^\top
  \left( Y - X \beta - A K^{-1} (M - \mu h) \right).
\end{align*}

\paragraph*{Kernel parameters $\zeta_j$}
\[
  \partial_{\zeta_j} \log p_\theta = \frac{1}{\sigma_{\epsilon}^2} \left( Y - X \beta - A K^{-1} (M - \mu h) \right)^\top A K^{-1} (\partial_{\zeta_j} K) K^{-1} (M - \mu h).
\]

The gradient for $\nu$ remains the same as in the centered case. The Monte Carlo and Rao--Blackwellized gradient estimators follow by substituting $(W,V)$ (centered) or $(M,V)$ (non-centered) with Gibbs samples, and, for the Rao--Blackwellized version, replacing the corresponding latent Gaussian variable by its conditional expectation under $W\mid(V,Y)$ or $M\mid(V,Y)$, respectively.

\bibliographystyle{imsart-number}
\bibliography{References.bib}

@article{asar2020,
  author   = {Asar, Ozgür and Bolin, David and Diggle, Peter J. and Wallin, Jonas},
  title    = {Linear mixed effects models for non-Gaussian continuous repeated measurement data},
  journal  = {Journal of the Royal Statistical Society: Series C (Applied Statistics)},
  volume   = {69},
  number   = {5},
  pages    = {1015-1065},
  eprint   = {https://rss.onlinelibrary.wiley.com/doi/pdf/10.1111/rssc.12405},
  year     = {2020}
}

@article{marcus1990,
  title     = {Determinants of sums},
  author    = {Marcus, Marvin},
  journal   = {The College Mathematics Journal},
  volume    = {21},
  number    = {2},
  pages     = {130--135},
  year      = {1990},
  publisher = {Taylor \& Francis}
}

@article{bolin2014,
  author   = {Bolin, David},
  title    = {Spatial Matérn Fields Driven by Non-Gaussian Noise},
  journal  = {Scandinavian Journal of Statistics},
  volume   = {41},
  number   = {3},
  pages    = {557-579},
  doi      = {https://doi.org/10.1111/sjos.12046},
  url      = {https://onlinelibrary.wiley.com/doi/abs/10.1111/sjos.12046},
  eprint   = {https://onlinelibrary.wiley.com/doi/pdf/10.1111/sjos.12046},
  year     = {2014}
}

@article{bolin2020,
  title    = {Multivariate type {G} {Matern} stochastic partial differential equation random fields},
  volume   = {82},
  issn     = {1467-9868},
  url      = {https://onlinelibrary.wiley.com/doi/abs/10.1111/rssb.12351},
  doi      = {10.1111/rssb.12351},
  language = {en},
  number   = {1},
  urldate  = {2023-02-11},
  journal  = {Journal of the Royal Statistical Society: Series B (Statistical Methodology)},
  author   = {Bolin, David and Wallin, Jonas},
  year     = {2020},
  note     = {multi var model},
  pages    = {215--239}
}

@article{INLA2018,
  title      = {Spatial Modeling with {{R-INLA}}: {{A}} Review},
  shorttitle = {Spatial Modeling with {{R-INLA}}},
  author     = {Bakka, Haakon and Rue, Havard and Fuglstad, Geir-Arne and Riebler, Andrea and Bolin, David and Illian, Janine and Krainski, Elias and Simpson, Daniel and Lindgren, Finn},
  year       = {2018},
  journal    = {WIREs Computational Statistics},
  volume     = {10},
  number     = {6},
  pages      = {e1443},
  issn       = {1939-0068},
  doi        = {10.1002/wics.1443},
  urldate    = {2023-11-01},
  copyright  = {2018 Wiley Periodicals, Inc.},
  langid     = {english}
}

@article{fong2010,
  title   = {Bayesian Inference for Generalized Linear Mixed Models},
  author  = {Fong, Youyi and Rue, Havard and Wakefield, Jon},
  year    = {2010},
  month   = jul,
  journal = {Biostatistics (Oxford, England)},
  volume  = {11},
  number  = {3},
  pages   = {397--412},
  issn    = {1465-4644},
  doi     = {10.1093/biostatistics/kxp053},
  urldate = {2023-11-01},
  pmcid   = {PMC2883299},
  pmid    = {19966070}
}

@article{martino2011,
  title    = {Approximate {{Bayesian Inference}} for {{Survival Models}}},
  author   = {Martino, Sara and Akerkar, Rupali and Rue, Havard},
  year     = {2011},
  journal  = {Scandinavian Journal of Statistics},
  volume   = {38},
  number   = {3},
  pages    = {514--528},
  issn     = {1467-9469},
  doi      = {10.1111/j.1467-9469.2010.00715.x},
  urldate  = {2023-11-01},
  langid   = {english}
}

@article{podgorski2016,
  title     = {Convolution-Invariant Subclasses of Generalized Hyperbolic Distributions},
  author    = {Podgorski, Krzysztof and Wallin, Jonas},
  year      = {2016},
  month     = jan,
  journal   = {Communications in Statistics - Theory and Methods},
  volume    = {45},
  number    = {1},
  pages     = {98--103},
  publisher = {{Taylor \& Francis}},
  issn      = {0361-0926},
  doi       = {10.1080/03610926.2013.821489},
  urldate   = {2023-11-05}
}

@article{qin2019,
  title     = {Estimating the Spectral Gap of a Trace-Class {{Markov}} Operator},
  author    = {Qin, Qian and Hobert, James P. and Khare, Kshitij},
  year      = {2019},
  month     = jan,
  journal   = {Electronic Journal of Statistics},
  volume    = {13},
  number    = {1},
  pages     = {1790--1822},
  publisher = {{Institute of Mathematical Statistics and Bernoulli Society}},
  issn      = {1935-7524, 1935-7524},
  doi       = {10.1214/19-EJS1563},
  urldate   = {2023-11-07}
}

@article{roberts1994,
  title     = {Simple Conditions for the Convergence of the {{Gibbs}} Sampler and {{Metropolis-Hastings}} Algorithms},
  author    = {Roberts, G. O. and Smith, A. F. M.},
  year      = {1994},
  journal   = {Stochastic Processes and their Applications},
  volume    = {49},
  number    = {2},
  pages     = {207--216},
  publisher = {{Elsevier}},
  urldate   = {2023-11-07},
  langid    = {english}
}

@article{tierney1994,
  title      = {Markov {{Chains}} for {{Exploring Posterior Distributions}}},
  author     = {Tierney, Luke},
  year       = {1994},
  journal    = {The Annals of Statistics},
  volume     = {22},
  number     = {4},
  eprint     = {2242477},
  eprinttype = {jstor},
  pages      = {1701--1728},
  publisher  = {{Institute of Mathematical Statistics}},
  issn       = {0090-5364},
  urldate    = {2023-11-07}
}

@article{wallin2015,
  title      = {Geostatistical {{Modelling Using Non-Gaussian Mat\'ern Fields}}: {{Non-Gaussian Mat\'ern}} Fields},
  shorttitle = {Geostatistical {{Modelling Using Non-Gaussian Mat\'ern Fields}}},
  author     = {Wallin, Jonas and Bolin, David},
  year       = {2015},
  month      = sep,
  journal    = {Scandinavian Journal of Statistics},
  volume     = {42},
  number     = {3},
  pages      = {872--890},
  issn       = {03036898},
  doi        = {10.1111/sjos.12141},
  urldate    = {2023-01-17},
  langid     = {english}
}

@article{dhull2021,
  title    = {Normal Inverse {{Gaussian}} Autoregressive Model Using {{EM}} Algorithm},
  author   = {Dhull, Monika S. and Kumar, Arun},
  year     = {2021},
  month    = sep,
  journal  = {International Journal of Advances in Engineering Sciences and Applied Mathematics},
  volume   = {13},
  number   = {2},
  pages    = {139--147},
  issn     = {0975-5616},
  doi      = {10.1007/s12572-021-00303-y},
  urldate  = {2023-11-13},
  langid   = {english}
}

@inproceedings{paciorek2003,
  title     = {Nonstationary {{Covariance Functions}} for {{Gaussian Process Regression}}},
  booktitle = {Advances in {{Neural Information Processing Systems}}},
  author    = {Paciorek, Christopher and Schervish, Mark},
  year      = {2003},
  volume    = {16},
  publisher = {{MIT Press}},
  urldate   = {2023-11-13}
}

@article{walder2020,
  title    = {Bayesian Analysis of Spatial Generalized Linear Mixed Models with {{Laplace}} Moving Average Random Fields},
  author   = {Walder, Adam and Hanks, Ephraim M.},
  year     = {2020},
  month    = apr,
  journal  = {Computational Statistics \& Data Analysis},
  volume   = {144},
  pages    = {106861},
  issn     = {0167-9473},
  doi      = {10.1016/j.csda.2019.106861},
  urldate  = {2023-11-13}
}

@article{roberts1997,
  title     = {Geometric {{Ergodicity}} and {{Hybrid Markov Chains}}},
  author    = {Roberts, Gareth and Rosenthal, Jeffrey},
  year      = {1997},
  month     = jan,
  journal   = {Electronic Communications in Probability},
  volume    = {2},
  number    = {none},
  pages     = {13--25},
  publisher = {{Institute of Mathematical Statistics and Bernoulli Society}},
  issn      = {1083-589X, 1083-589X},
  doi       = {10.1214/ECP.v2-981},
  urldate   = {2023-11-17}
}

@book{abramowitz1965,
  author    = {Abramowitz, Milton and Stegun, Irene},
  title     = {Handbook of {{Mathematical Functions}} with {{Formulas}}, {{Graphs}}, and {{Mathematical Tables}}},
  year      = {1965},
  series    = {Dover {{Books}} on {{Advanced Mathematics}}},
  publisher = {{Dover Publications}},
  address   = {{New York}}
}

@article{pal2014,
  title    = {Geometric Ergodicity for Bayesian Shrinkage Models},
  author   = {Pal, Subhadip and Khare, Kshitij},
  year     = {2014},
  month    = jan,
  journal  = {Electronic Journal of Statistics},
  volume   = {8},
  number   = {1},
  issn     = {1935-7524},
  doi      = {10.1214/14-EJS896},
  urldate  = {2024-03-17},
  langid   = {english}
}

@article{gurland1956,
  author   = {Gurland, John},
  title    = {An inequality satisfied by the {G}amma function},
  journal  = {Skand. Aktuarietidskr.},
  fjournal = {Skandinavisk Aktuarietidskrift. Scandinavian Actuarial
              Journal},
  volume   = {39},
  year     = {1956},
  pages    = {171--172},
  issn     = {0037-606X},
  mrclass  = {33.00},
  mrnumber = {95293},
  doi      = {10.1080/03461238.1956.10414947},
  url      = {https://doi.org/10.1080/03461238.1956.10414947}
}

@article{rosenthal1995,
  title      = {Minorization {{Conditions}} and {{Convergence Rates}} for {{Markov Chain Monte Carlo}}},
  author     = {Rosenthal, Jeffrey S.},
  year       = {1995},
  journal    = {Journal of the American Statistical Association},
  volume     = {90},
  number     = {430},
  eprint     = {2291067},
  eprinttype = {jstor},
  pages      = {558--566},
  publisher  = {[American Statistical Association, Taylor \& Francis, Ltd.]},
  issn       = {0162-1459},
  doi        = {10.2307/2291067},
  urldate    = {2024-03-17}
}

@article{rue2017,
  title   = {Bayesian Computing with {INLA}: {A} Review},
  author  = {Rue, Håvard and Riebler, Andrea and Sørbye, Sigrunn H and Illian, Janine B and Simpson, Daniel P and Lindgren, Finn K},
  year    = {2017},
  journal = {Annual Review of Statistics and Its Application},
  volume  = {4},
  number  = {1},
  pages   = {395--421},
  issn    = {2372-4894},
  doi     = {10.1146/annurev-statistics-031016-085240},
  urldate = {2024-03-17}
}

@article{barndorff1978,
  title   = {Hyperbolic {{Distributions}} and {{Distributions}} on {{Hyperbolae}}},
  author  = {Barndorff-Nielsen, Ole},
  year    = {1978},
  journal = {Scandinavian Journal of Statistics},
  volume  = {5},
  number  = {3}
}

@article{cabral2024,
  title   = {Fitting Latent Non-{{Gaussian}} Models Using Variational Bayes and Laplace Approximations},
  author  = {Cabral, Rafael and Bolin, David and Rue, Håvard},
  year    = {2024},
  journal = {Journal of the American Statistical Association},
  volume  = {119},
  number  = {546},
  pages   = {1--13},
  issn    = {0162-1459},
  doi     = {10.1080/01621459.2023.2296704}
}

@book{douc2013nonlinear,
  title     = {Nonlinear Time Series: Theory, Methods and Applications with {R} Examples},
  author    = {Douc, Randal and Moulines, Eric and Stoffer, David},
  year      = {2013},
  publisher = {Chapman and Hall/CRC},
  address   = {New York},
  doi       = {10.1201/b16331},
  isbn      = {9780429112638},
  pages     = {551}
}

@article{carvalho2010horseshoe,
  title   = {The horseshoe estimator for sparse signals},
  author  = {Carvalho, Carlos M. and Polson, Nicholas G. and Scott, James G.},
  journal = {Biometrika},
  volume  = {97},
  number  = {2},
  pages   = {465--480},
  year    = {2010},
  month   = {June},
  doi     = {10.1093/biomet/asq017},
  url     = {https://www.jstor.org/stable/25734098}
}

@article{park2008bayesian,
  title   = {The Bayesian {{Lasso}}},
  author  = {Park, Trevor and Casella, George},
  journal = {Journal of the American Statistical Association},
  volume  = {103},
  number  = {482},
  pages   = {681--686},
  year    = {2008}
}

@article{khare2013,
  title   = {Geometric Ergodicity of the Bayesian {{Lasso}}},
  author  = {Khare, Kshitij and Hobert, James P},
  journal = {Electronic Journal of Statistics},
  volume  = {7},
  pages   = {2150--2163},
  year    = {2013}
}

@article{bhattacharya2015,
  title   = {Dirichlet–{{Laplace}} Priors for Optimal Shrinkage},
  author  = {Bhattacharya, Anirban and Pati, Debdeep and Pillai, Natesh S and Dunson, David B},
  journal = {Journal of the American Statistical Association},
  volume  = {110},
  number  = {512},
  pages   = {1479--1490},
  year    = {2015}
}

@article{bhattacharya2022,
  title   = {Geometric Ergodicity of Gibbs Samplers for the Horseshoe and its Regularized Variants},
  author  = {Bhattacharya, Suman and Khare, Kshitij and Pal, Subhadip},
  journal = {Electronic Journal of Statistics},
  volume  = {16},
  number  = {1},
  pages   = {1--57},
  year    = {2022},
  doi     = {10.1214/21-EJS1932}
}

@article{zhang2019,
  title   = {Trace class Markov chains for the Normal-Gamma Bayesian shrinkage model},
  author  = {Zhang, Liyuan and Khare, Kshitij and Xing, Zeren},
  journal = {Electronic Journal of Statistics},
  volume  = {13},
  number  = {1},
  pages   = {166--207},
  year    = {2019},
  doi     = {10.1214/18-EJS1491}
}

@phdthesis{zhang2017,
  title  = {Trace class markov chains for bayesian shrinkage models},
  author = {Zhang, Liyuan},
  year   = {2017},
  school = {University of Florida}
}

@article{pal2017traceclass,
  author   = {Pal, Subahdip and Khare, Kshitij and Hobert, James P.},
  title    = {Trace Class Markov Chains for Bayesian Inference with Generalized Double Pareto Shrinkage Priors},
  journal  = {Scandinavian Journal of Statistics},
  volume   = {44},
  number   = {2},
  pages    = {307-323},
  doi      = {https://doi.org/10.1111/sjos.12254},
  url      = {https://onlinelibrary.wiley.com/doi/abs/10.1111/sjos.12254},
  year     = {2017}
}

@article{robbins1951,
  title   = {A Stochastic Approximation Method},
  author  = {Robbins, Herbert and Sutton, Monro},
  journal = {Ann. Math. Statist.},
  volume  = {22},
  number  = {3},
  pages   = {400--407},
  year    = {1951}
}

@book{bottou1998,
  title     = {Online Algorithms and Stochastic Approximations},
  author    = {Bottou, Léon},
  year      = {1998},
  publisher = {Cambridge University Press},
  isbn      = {978-0-521-65263-6}
}

@book{kushner2003stochastic,
  title     = {Stochastic Approximation and Recursive Algorithms and Applications},
  author    = {Kushner, Harold J. and Yin, G. George},
  year      = {2003},
  publisher = {Springer New York, NY},
  series    = {Stochastic Modelling and Applied Probability},
  volume    = {35},
  edition   = {2},
  doi       = {10.1007/b97441},
  isbn      = {978-0-387-00894-3}
}

@article{roberts2004,
  author  = {Roberts, Gareth O. and Rosenthal, Jeffrey S.},
  title   = {General state space {Markov} chains and {MCMC} algorithms},
  journal = {Probability Surveys},
  year    = {2004},
  volume  = {1},
  pages   = {20--71},
  doi     = {10.1214/154957804100000024},
  issn    = {1549-5787}
}

\end{document}